\newcommand{\defeq}{\overset{\text{\tiny def}}{=}}
\numberwithin{equation}{section}
\numberwithin{equation}{section}
\theoremstyle{plain}
\newtheorem{theorem}{Theorem}[section]
\newtheorem{lemma}[theorem]{Lemma}
\newtheorem{proposition}[theorem]{Proposition}
\theoremstyle{definition}
\newtheorem{remark}[theorem]{Remark}
\newcommand{\R}{\mathbb{R}}
\newcommand{\PP}{\mathbb{P}}
\newcommand{\A}{\mathbb{A}}
\newcommand{\N}{\mathbb{N}}
\newcommand{\X}{\mathbb{S}}
\newcommand{\LL}{\mathcal{L}}
\title{Scalable First-Order Methods for Robust MDPs}
\author{%
   Julien Grand-Cl{\'e}ment \\
Columbia University \\
   \texttt{jg3728@columbia.edu} \\
   \And
  Christian Kroer\\
Columbia University\\
  \texttt{ck2945@columbia.edu} \\
}
\begin{document}
\maketitle

\begin{abstract}
Robust Markov Decision Processes (MDPs) are a powerful framework for modeling sequential decision making problems with model uncertainty. This paper proposes the first first-order framework for solving robust MDPs. Our algorithm interleaves primal-dual first-order updates with approximate Value Iteration updates. By carefully controlling the tradeoff between the accuracy and cost of Value Iteration updates, we achieve an ergodic convergence rate of
$O \left( A^{2} S^{3}\log(S)\log(\epsilon^{-1}) \epsilon^{-1} \right)$ for the best choice of parameters on ellipsoidal and Kullback-Leibler $s$-rectangular uncertainty sets, where $S$ and $A$ is the number of states and actions, respectively.
Our dependence on the number of states and actions is significantly better (by a factor of $O(A^{1.5}S^{1.5})$) than that of pure Value Iteration algorithms.
In numerical experiments on ellipsoidal uncertainty sets we show that our algorithm is significantly more scalable than state-of-the-art approaches. Our framework is also the first one to solve robust MDPs with $s$-rectangular KL uncertainty sets.
\end{abstract}
\section{Introduction}\label{sec:intro}
In this paper we focus on solving robust Markov Decision Processes (MDPs) with finite set of states and actions.
Markov decision process models are widely used in decision-making \citep{Bertsekas,Puterman}. In the classical MDP setting, for each state $s \in \X$,  the decision maker chooses a probability distribution $\bm{x}_{s}$ over the set of actions $\A$. The decision maker incurs a cost $\sum_{a=1}^{|\A|} x_{sa}c_{sa}$ for some non-negative scalars $c_{sa}$ and then randomly enters a new state, according to transition kernels $\bm{y}=\left\{ \bm{y}_{sa}  \in \Delta(|\X|) \right\}_{sa}$ over the next state, where $\Delta(|\X|)$ is the simplex of size $|\X|$. Given a discount factor $\lambda$, the goal of the decision-maker is to minimize the infinite horizon discounted expected cost 
$
 C(\bm{x},\bm{y})=E^{\bm{x},\bm{y}} \left[ \sum_{t=0}^{\infty} \lambda^{t}c_{s_{t}a_{t}} \; \bigg| \; \bm{s}_{0} \sim \bm{p}_{0} \right].
$
%

The cost of a policy can be highly sensitive to the exact kernel parameters $\bm{y}$.
We consider a robust approach where the uncertainty in $\boldsymbol{y}$ is adversarially selected from an \textit{uncertainty set} $\PP$ centered around a nominal estimation $\bm{y}^{0}$ of the true transition kernel. Our goal is to solve the \textit{robust MDP} problem
$
\min_{\bm{x} \in \Pi} \; \max_{\boldsymbol{y} \in \PP} \; C(\bm{x},\boldsymbol{y})
$ \citep{Iyengar, Nilim, Kuhn, Goh, GGC}, which has found applications in healthcare~\citep{steimle-1, steimle-2, Goh, grand2020robust}.
We focus on $s$-rectangular uncertainty sets, where
$
\PP = \underset{s \in \X}{\times} \; \PP_{s}, \text{ for } \; \PP_{s} \subseteq \R^{|\A| \times |\X|}_{+},
$
and solving the robust MDP problem is equivalent to computing the fixed point of the \textit{Bellman operator}, thus allowing a \textit{value iteration} (VI) algorithm \citep{Kuhn}.

We focus on two specific classes of $s$-rectangular uncertainty sets. \textit{Kullback-Leibler} (KL) uncertainty sets are constructed from density estimation and naturally appear as approximations of the confidence intervals for the maximum likelihood estimates of $\bm{y}$ given some historical transition data \citep{Iyengar}.
\textit{Ellipsoidal} uncertainty sets are widely used because of their tractability and the probabilistic guarantees of the optimal solutions of the robust problems \citep{ben2000robust,bertsimas2019probabilistic}. 
For ellipsoidal uncertainty sets, the value iteration algorithm involves solving a convex program with a quadratic constraint at every epoch. 
While this can be done in polynomial time with modern Interior Point Methods (IPMs, \cite{lobo1998applications}), this requires inverting matrices at every step of the IPM which can be intractable for large MDP instances. 
Typically, for $S = | \X|, A = | \A|$, the complexity of VI to return an $\epsilon$-solution to the robust MDP problem with ellipsoidal uncertainty sets is $O \left( A^{3.5}S^{4.5}\log^{2}(\epsilon^{-1}) \right).$ This may prove prohibitive for large instances. For KL uncertainty sets, we are not aware of any tractable algorithm for solving $s$-rectangular robust MDP with KL uncertainty sets, even  though they are well understood in Distributionally Robust Optimization \citep{hu2013kullback}.

Many problems in machine learning and game theory can be written in the form $
\min_{\bm{x} \in X} \max_{\bm{y} \in Y} \LL(\bm{x},\bm{y}),
$
where $\LL$ is a convex-concave function and $X,Y$ are reflexive Banach spaces, e.g. regularized finite-sum loss minimization, imaging models, and sequential two-player zero-sum games \citep{ChambollePock2011,kroer2018faster}. Even though convex duality often allows reformulating this saddle-point problem as a single convex program, first-order methods (FOMs) such as Chambolle \& Pock's Primal-Dual Algorithm (PDA, \cite{ChambollePock2011}), Mirror Descent \citep{nemirovsky1983problem} or Mirror Prox \citep{nemirovski2004prox}
are typically preferred for large instances. This is due to the expensive matrix calculations involved in IPMs or the simplex algorithm. 
Naively, one may hope to apply FOMs directly to the robust MDP problem, which looks superficially similar to the saddle-point problem. However, since the robust MDP problem is not convex-concave FOMs may fail to converge.


Our main contributions can be summarized as follows.

\noindent{\bf A First-Order Method for Robust MDP.} We present a new algorithmic framework for solving robust MDPs that is significantly more scalable than previous methods. Our algorithm adapts FOMs for solving static zero-sum games to a dynamic setting with varying payoff matrices. Only cheap proximal mappings need to be computed at each iteration. Our framework interleaves FOM updates with occasional approximate VI updates. By carefully controlling the pace of VI updates, and developing bounds on the change in the payoff matrices of the zero-sum games, we show that the ergodic \textit{average} of \textit{policies} generated by our framework converges (in value) at a rate of nearly $\frac{1}{T}$ in terms of the number of FOM steps $T$. Note the critical difference with the classical analysis of Value Iteration algorithms, which rely on the \textit{last-iterate} convergence of the sequence of \textit{vector} iterates.

\noindent{\bf Suitable Proximal Setups.} Our algorithmic framework is general and works for any uncertainty set for which a suitable proximal setup exists. We instantiate our algorithm on KL and ellipsoidal uncertainty sets. 
To the best of our knowledge, our algorithm is the first to address $s$-rectangular KL uncertainty sets for robust MDPs, and is the most scalable for ellipsoidal $s$-rectangular uncertainty sets in terms of number of states and actions.

\noindent{\bf Empirical Performance.} We focus our numerical experiments on ellipsoidal and KL uncertainty sets.  We investigate several proximal setups, and find that an $\ell_{2}$ setup performs better than an $\ell_{1}$ setup, despite better theoretical guarantees for the $\ell_{1}$ setup. Similar observations have been made for numerical performance on stationary saddle-point optimization~\citep{ChambollePock16,GKG20}. Finally, we show that our approach is significantly more scalable than state-of-the-art VI setups, both on random instances and on applications inspired from healthcare and machine replacement.


\textbf{Related work.} We now present a summary of the most related literature.

\noindent\textbf{Approximate value iteration and Regularized MDP.} For the \textit{nominal} MDP setting,  approximate Value Iteration \citep{Farias-VanRoy,petrik2010optimization,scherrer2015approximate} considers inexact Bellman updates which arise from sample-based errors or function approximation; note that contrary to works involving value function approximation, we are solving the MDP up to any chosen accuracy.
\citet{geist2019theory} adds KL regularization to the VI update for nominal MDP and relate this to Mirror Descent. In contrast to \citet{geist2019theory}, we focus on robust MDPs, where VI requires computing the robust Bellman operator. Our framework is based on a connection to zero-sum games with changing payoff matrices and we use FOMs to efficiently approximate the robust Bellman update. This is very different from \citet{geist2019theory}, where regularized VI itself is treated as a FOM.

\noindent\textbf{Faster value iteration algorithms.} For nominal MDPs, several algorithms have been proposed to accelerate the convergence of VI, including Anderson mixing \citep{ref-a, ref-c} and acceleration and momentum schemes \citep{GGC-AVI}. However, these methods modify the VI algorithm itself, and do not accelerate the computation of each Bellman update.

\noindent\textbf{Faster Bellman updates.} For $s,a$-rectangular uncertainty sets, robust Bellman updates were studied for uncertainty sets defined by balls for the $\ell_{1}$ and weighed $\ell_{1}$ norms (\cite{Iyengar, Ho}), $\ell_{2}$ norm \citep{Iyengar}, KL-divergence \citep{Nilim} and $\ell_{\infty}$ norm \citep{givan1997bounded}.  To the best of our knowledge, the only paper on fast computation of robust Bellman updates for $s$-rectangular uncertainty sets is \citet{Ho} which considers weighted $\ell_{1}$-balls for $\PP_{s}$ and attains a complexity of $O \left( A S^{3} \log (A S^{2}) \log(\epsilon^{-1})\right)$ for finding an $\epsilon$-solution to the robust MDP problem.
%
This complexity result relies on linear programming theory and cannot directly be extended to other settings for $\PP_{s}$ (e.g. ellipsoidal or KL uncertainty sets).

\section{Preliminaries on Robust MDP}\label{sec:RMDP}
\textbf{Notation.} We write $|\X|=S,|\A|=A$ and we assume $S < + \infty, A < + \infty.$ 
Given a policy $\bm{x} \in \Pi = \left(\Delta(A)\right)^{S}$ and a kernel $\bm{y} \in \PP$, we define the one-step cost vector $\bm{c}_{\bm{x}} \in \R^{S}$ and the  value vector $\bm{v}^{\bm{x},\bm{y}} \in \R^{S}$  as $c_{\bm{x},s} = \sum_{a=1}^{A} x_{sa}c_{sa}, v^{\bm{x},\bm{y}}_{s} = E^{\bm{x}, \bm{y}} \left[ \sum_{t=0}^{\infty} \lambda^{t}c_{s_{t}a_{t}} \; \bigg| \; s_{0} = s \right], \forall \; s \in \X$ .

\noindent
\textbf{Value Iteration.}
We first define Value Iteration (VI) for general $s$-rectangular uncertainty sets. Let $
\PP = \underset{s \in \X}{\times} \; \PP_{s}, \text{ for } \; \PP_{s} \subseteq \R^{|\A| \times |\X|}_{+}. 
$ and let us define the (robust) \textit{Bellman operator} $F: \R^{S} \rightarrow \R^{S}$, where for $\bm{v} \in \R^{S}$,
\begin{equation}\label{eq:T_max-def}
F(\bm{v})_{s} = \min_{\bm{x}_{s} \in \Delta(A)} \max_{\bm{y}_{s} \in \PP_{s}} \left\{ \sum_{a=1}^{A} x_{sa} \left( c_{sa} + \lambda \bm{y}_{sa}^{\top}\bm{v} \right) \right\},
\end{equation} 
for each $s \in \X$.
Note that with the notation $F^{\bm{x},\bm{y}}(\bm{v})_{s} = \sum_{a=1}^{A} x_{sa} \left( c_{sa} + \lambda \cdot \bm{y}_{sa}^{\top}\bm{v} \right)$, we can also write
$
F(\bm{v})_{s} = \min_{\bm{x}_{s} \in \Delta(A)} \max_{\bm{y}_{s} \in \PP_{s}} F^{\bm{x},\bm{y}}(\bm{v})_{s},
$
which shows that the robust VI update is a stationary saddle-point problem.
Solving the robust MDP problem is equivalent to computing $\bm{v}^{*}$, the fixed-point of $F$:
\begin{equation}\label{eq:v-star-fixed-point}
v_{s}^{*} = \min_{\bm{x}_{s} \in \Delta(A)} \max_{\bm{y}_{s} \in \PP_{s}} \{ \sum_{a=1}^{A} x_{sa} \left( c_{sa} + \lambda \cdot \bm{y}_{sa}^{\top}\bm{v}^{*} \right) \}, \forall \; s \in \X.
\end{equation}
Since $F$ is a contraction with factor $\lambda$, this can be done with the Value Iteration (VI) Algorithm:
\begin{equation}\label{alg:VI}\tag{VI}
\bm{v}_{0} \in \R^{S}, \bm{v}_{\ell+1} = F(\bm{v}_{\ell}), \forall \; \ell \geq 0.
\end{equation}

\noindent 
\ref{alg:VI} returns a sequence $\{\bm{v}_{\ell}\}_{\ell \geq 0}$ such that
$\| \bm{v}^{\ell+1} - \bm{v}^{*} \|_{\infty} \leq \lambda \cdot \| \bm{v}^{\ell} - \bm{v}^{*} \|_{\infty}, \forall \; \ell \geq 0.$
An optimal pair $(\bm{x}^{*},\bm{y}^{*})$ can be computed as any pair attaining the $\min \max$ in $F(\bm{v}^{*})$. An $\epsilon$-optimal pair can be computed as a solution to \eqref{eq:T_max-def}, when $\| \bm{v} - F(\bm{v}) \|_{\infty} <  \epsilon (1-\lambda) (2\lambda)^{-1}$ \citep{Puterman}. Our algorithm relies on approximately solving \ref{eq:T_max-def} as part of VI; controlling $\epsilon_{\ell}$, the  accuracy of epoch $\ell$ of VI, plays a crucial role in the analysis of our algorithm. In Appendix \ref{app:proof-sec-lemmas}, we present \textit{approximate} Value Iteration,  where the Bellman update $F(\bm{v}^{\ell})$ at epoch $\ell$ is only computed up to accuracy $\epsilon_{\ell}$.

\noindent \textbf{Ellipsoidal and KL uncertainty sets.}
We will show specific results for two types of $s$-rectangular uncertainty sets, though our algorithmic framework applies more generally, as long as appropriate proximal mappings can be computed. We consider KL $s$-rectangular uncertainty sets where $\PP_{s}$ equals
\begin{equation}\label{eq:KL-uncertainty-set}
 \{\left( \bm{y}_{sa} \right)_{a \in \A} \in \left(\Delta(S)\right)^{A} \; | \; \sum_{a \in \A} KL(\bm{y}_{sa},\bm{y}^{0}_{sa}) \leq \alpha\},
\end{equation}
and \textit{ellipsoidal} $s$-rectangular uncertainty sets
where $\PP_{s}$ equals
\begin{equation}\label{eq:uncertainty-norm-2}
  \begin{aligned}
\{\left( \bm{y}_{sa} \right)_{a \in \A} \in \left(\Delta(S)\right)^{A} \; | \; \sum_{a \in \A} \dfrac{1}{2} \| \bm{y}_{sa} - \bm{y}^{0}_{sa} \|_{2}^{2} \leq \alpha\}. 
  \end{aligned}
\end{equation}

Note that \eqref{eq:uncertainty-norm-2} is different from the ellipsoidal uncertainty sets considered in \citet{ben2000robust}, which also adds box constraints. However,  \citet{bertsimas2019probabilistic} shows that the same probabilistic guarantees exist for \eqref{eq:uncertainty-norm-2} as in the case of the uncertainty sets considered in \citet{ben2000robust}. For solving $s$-rectangular KL uncertainty sets, no algorithm is known (contrary to the significantly more conservative $s,a$-rectangular case). \citet{Kuhn} solves $s$-rectangular ellipsoidal uncertainty sets \eqref{eq:uncertainty-norm-2} using conic programs; we choose to instantiate VI differently in this case as follows. Using min-max convex duality twice, we can reformulate each  of the $S$ min-max programs \eqref{eq:T_max-def} into a larger convex program with linear objective and constraints, and one quadratic constraint (see \eqref{eq:T_v_simu} in Appendix \ref{app:simu}). Using IPMs each program can be solved up to $\epsilon$ accuracy in $O \left( A^{3.5}S^{3.5} \log(1/\epsilon) \right)$ arithmetic operations (\cite{BenTal-Nemirovski}, Section 4.6.2). Therefore, the complexity of \eqref{alg:VI} is
 \begin{equation}\label{eq:complexity-Kuhn}
O \left( A^{3.5} S^{4.5}\log^{2}(\epsilon^{-1}) \right).
\end{equation}
As mentioned earlier, this becomes intractable as soon as the number of states becomes on the order of hundreds, as highlighted in our numerical experiments of Section \ref{sec:simu}.


\section{First-Order Methods for Robust MDPs}

We start by briefly introducing first-order methods (FOMs) in the context of our problem, and 
giving a high-level overview of our first-order framework for solving robust MDPs.
A FOM is a method that iteratively produces pairs of solution candidates $\bm{x}^t,\bm{y}^t$, where the $t$'th solution pair is derived from $\bm{x}^{t-1},\bm{y}^{t-1}$ combined with a first-order approximation to the direction of improvement at $\bm{x}^{t-1},\bm{y}^{t-1}$. Using only first-order information is desirable for large-scale problems because second-order information eventually becomes too slow to compute, meaning that even a single iteration of a second-order method ends up being intractable.
See e.g. \citet{beck2017first} or \citet{BenTal-Nemirovski} for more on FOMs.

Our algorithmic framework is based on the observation that there exists a collection of matrices $\bm{K}^{*}_{s}: \Delta(A) \times \PP_{s} \rightarrow \R$, for $s \in \X$, such that computing an optimal solution $\bm{x}^{*},\bm{y}^{*}$ to the robust MDP problem boils down to solving $S$ bilinear saddle-point problems (BSPPs), each of the form
\begin{align}
\min_{\bm{x}_{s} \in \Delta(A)} \max_{\bm{y}_{s} \in \PP_{s}} \langle \bm{c}_s, \bm{x}_s \rangle +  \langle \bm{K}^*_s\bm{x}_s, \bm{y}_s \rangle.
  \label{eq:bspp}
\end{align}

This is a straightforward consequence of the Bellman equation \ref{eq:T_max-def} and its reformulation using $F^{\bm{x},\bm{y}}$. The matrix $\bm{K}^*_s$ is the payoff matrix associated with the optimal value vector $v^*$. If we knew $\bm{K}^*_s$, then we could solve \eqref{eq:bspp} by applying existing FOMs for solving BSPPs.

Now, obviously we do not know $\bm{v}^*$ before running our algorithm. However, we know that Value Iteration constructs a sequence $\{\bm{v}_{\ell}\}_{\ell \geq 0}$ which converges to $\bm{v}^*$. Letting  $\{\bm{K}^{\ell}_s\}_{\ell \geq 0}$ be the associated payoff matrices for each value-vector estimate $\bm{v}_{\ell}$ and state $s$, we thus have a sequence of payoff matrices converging to $\bm{K}^*_s$ for each $s$. We will apply a FOM to such a sequence of BSPPs $\{\bm{K}^{\ell}_s\}_{\ell \geq 0}$ based on approximate Value Iteration updates.

Our algorithmic framework, which we call FOM-VI, works as follows. We utilize an existing primal-dual FOM for solving problems of the form \eqref{eq:bspp}, where the FOM should be of the type that generates a sequence of iterates $\bm{x}_t,\bm{y}_t$, with an ergodic convergence rate on the time-averaged iterates.
Even though such FOMs are designed for a \emph{fixed} BSPP with a single payoff matrix $\bm{K}$, we apply the FOM updates to a changing sequence of payoff matrices $\{\bm{K}^{\ell}_s\}_{\ell = 1}^k$. For each payoff matrix $\bm{K}^{\ell}_{s}$ we apply $T_\ell$ iterations of the FOM, after which we apply an approximate VI update to generate $\bm{K}^{\ell+1}_{s}$. We refer to each step $\ell$ with a payoff matrix $\bm{K}^{\ell}_{s}$ as an \emph{epoch}, while \emph{iteration} refers to steps of our FOM. We will apply many iterations per epoch.

The convergence rate of our algorithm is, intuitively, based on the following facts: (i) the average of the iterates generated during epoch $\ell$ provides a good estimate of the VI update associated with $\bm{v}^\ell$, and (ii) the sequence of payoff matrices generated by the approximate VI updates is changing in a controlled manner, such that $\bm{K}^{\ell}_{s}$ and $\bm{K}^{\ell+1}_{s}$ are not too different.

These facts allow us to show that the averaged strategy across \emph{all} epochs $\ell$ converges to a solution to \eqref{eq:bspp} without too much degradation in the convergence rate compared to having run the same number of iterations directly on \eqref{eq:bspp}. 

\subsection{First-Order Method Setup}
In this paper, we use the PDA algorithm of \citet{ChambollePock16} as our FOM, but the derivations could also be performed with other FOMs whose convergence rate is based on applying a telescoping argument to a sum of descent inequalities, e.g.  mirror prox of~\citet{nemirovski2004prox} or saddle-point mirror descent of~\citet{BenTal-Nemirovski}; the latter would yield a slower rate of convergence.

We now describe PDA as it applies to BSPPs such as \eqref{eq:bspp}, for an arbitrary payoff matrix $\bm{K}$ and some state $s$.
PDA relies on what we will call a \emph{proximal setup}. A proximal setup consists of a set of norms $\| \cdot \|_X, \| \cdot \|_Y$ for the spaces of $\bm{x}_s,\bm{y}_s$, as well as 
\emph{distance-generating functions} $\psi_{X}$ and $\psi_Y$, which are  1-strongly convex with respect to $\| \cdot \|_{X}$ on $\Delta(A)$ and  $\| \cdot \|_{Y}$ on $\mathbb{P}_s$, respectively. 
Using the distance-generating functions,  PDA uses the \emph{Bregman divergence}
\begin{align*}
  D_{X}(\bm{x},\bm{x'}) = \psi_{X}(\bm{x'}) - \psi_{X}(\bm{x}) - \langle \nabla \psi_{X}(\bm{x}), \bm{x'} - \bm{x} \rangle
\end{align*}
to measure the distance between two points $\bm{x}, \bm{x'} \in \Delta(A)$.
The Bregman divergence $D_Y$ is defined analogously.

The convergence rate of PDA then depends on the maximum Bregman divergence distance $\Theta_{X}=\max_{\bm{x},\bm{x'}\in \Delta(A)}D_{X}(\bm{x},\bm{x'})$ between any two points, and the maximum norm $R_{X}=\max_{\bm{x}\in \Delta(A)}\|\bm{x}\|_X$ on $\Delta(A)$. The quantities $\Theta_{Y}$ and $R_Y$ are defined analogously on $\mathbb{P}_s$.

Given $D_X$ and $D_Y$, the associated \emph{prox mappings} are 
\begin{align*}
  \textrm{prox}_x(\bm{g}_x, \bm{x'}_s) &= \arg\min_{\bm{x}_s \in \Delta(A)} \langle \bm{g}_x,\bm{x} \rangle + D_X(\bm{x}_s, \bm{x'}_s),\\
  \textrm{prox}_y(\bm{g}_y, \bm{y'}_s) &= \arg \max_{\bm{y}_s \in \mathbb{P}_s} \langle \bm{g}_y ,\bm{y}_s\rangle - D_{Y}(\bm{y}_s,\bm{y'}_s).
\end{align*}
Intuitively, the prox mappings generalize taking a step in the direction of the negative gradient, as in gradient descent. Given some gradient $\bm{g}_x$, $\textrm{prox}_x(\bm{g}_x, \bm{x'}_s)$ moves in the direction of improvement, but is penalized by the Bregman divergence $D_X(\bm{x}_s,\bm{x'}_s)$, which attempts to ensure that we stay in a region where the first-order approximation is still good.

Given step sizes $\tau,\sigma > 0$ and current iterates $\bm{x}_s^t, \bm{y}_s^t$,
PDA generates the iterates for $t+1$ by taking prox steps in the negative gradient direction given the current strategies:
\begin{equation} \label{eq:pda_iterates} 
\begin{aligned}
\bm{x}_s^{t+1} & = \textrm{prox}_x(\tau\bm{K}^\top\bm{y}^t_s, \bm{x}_s^t),\\
\bm{y}_s^{t+1} & = \textrm{prox}_y(\sigma\bm{K}(2\bm{x}^{t+1}_s - \bm{x}^t_s), \bm{x}_s^t),\\
\end{aligned}
\end{equation}
Note that for the $\bm{y}_s^{t+1}$ update, the ``direction of improvement'' is measured according to the extrapolated point $2\bm{x}^{t+1}_s - \bm{x}^t_s$, as opposed to at either $\bm{x}^t_s$ or $\bm{x}^{t+1}_s$. If a simpler single current iterate is used to take the gradient for $\bm{y}_s^{t+1}$, then the overall PDA setup yields an algorithm that converges at a $O(1/\sqrt{T})$ rate. The extrapolation is used to get a stronger $O(1/T)$ rate.



Let
$\Omega = 2 \left(\Theta_{X}/\tau + \Theta_{Y}/\sigma \right)$ and let $\tau,\sigma > 0$ be such that, for $L_{\bm{K}} \geq \sup_{\| \bm{x} \|_{X} \leq 1, \| \bm{y} \|_{Y} \leq 1} \langle \bm{Kx},\bm{y} \rangle$, we have
\begin{equation}\label{eq:step-size-general}
\left( \dfrac{1}{\tau} - L_{f} \right) \dfrac{1}{\sigma} \geq L_{\bm{K}}^{2}.
\end{equation} 

After  $T$ iterations of PDA, we can construct weighted averages $(\bm{\bar{x}}^{T},\bm{\bar{y}}^{T}) = (1/S_{T}) \sum_{t=1}^{T} \omega_{t} (\bm{x}_{t},\bm{y}_{t})$ of all iterates, using weights $\omega_{1},...,\omega_{T}$ and normalization factor $S_{T} = \sum_{t=1}^{T} \omega_{t}$.
In the case of a static BSPP, if the stepsizes are chosen such that they satisfy \eqref{eq:step-size-general}, then the average of the iterates from PDA satisfies the convergence rate:
\[ \max_{\bm{y} \in \PP_{s} } \langle \bm{K}\bm{\bar{x}}^{T},\bm{y}  \rangle-  \min_{\bm{x} \in \Delta(A)} \langle \bm{K}\bm{x},\bm{\bar{y}}^{T} \rangle \leq \Omega  \omega_{T}/S_{T}.\]

Here we are using the weighted average of iterates, as in  \citet{GKG20}, see Appendix \ref{app:lem-norm-K}.
Since PDA applies the two prox mappings \eqref{eq:pda_iterates} at every iteration, it is crucial that these prox mappings can be computed efficiently. Ideally, in time roughly linear in the dimension of the iterates. A significant part of our contribution is to show that this is indeed the case for several important types of uncertainty sets.

In our setting, where the payoff matrix $\bm{K}^\ell_s$ in the BSPP is changing over time, the existing convergence rate for PDA does not apply. Instead, we have to consider how to deal with the error that is introduced in the process due to the changing payoffs.


\subsection{First-Order Method Value Iteration (FOM-VI)}
We now describe our algorithm in detail, as well as the choices of $\| \cdot \|_{X}, \|\cdot \|_{Y}$ that lead to tractable proximal updates \eqref{eq:pda_iterates}. As we have described in \eqref{eq:bspp}, given a vector $\bm{v} \in \R^{S}$ and $s \in \X$, the matrix $\bm{K}_{s} \in \R^{A \times A \times S}$ is defined such that
\[ F^{\bm{x},\bm{y}}_{s}(\bm{v}) =\langle \bm{c}_s, \bm{x}_s \rangle +  \langle \bm{K}_{s}\bm{x}_s, \bm{y}_s \rangle.\] 
We will write this as $\bm{K} = \bm{K}[\bm{v}]$. 
The pseudocode for the FOM-VI algorithm is given in Algorithm~\ref{alg:PD-RMDP}.
\begin{algorithm}[]
\caption{First-order Method for Robust MDP with $s$-rectangular uncertainty set.}\label{alg:PD-RMDP}
\begin{algorithmic}[1]
  \State \textbf{Input}
  Number of epochs $k$, number of iterations per epoch $T_{1}, ..., T_{k}$, weights $\omega_{1}, ..., \omega_{T}$, and stepsizes $\tau,\sigma$.
\State\textbf{ Initialize} $\ell=1, \bm{v}^{\ell}=\bm{0},$ and $\bm{x}^{0}, \bm{y}^{0}$ at random.
\For{epoch $ \ell=1,..., k$}
\For{$s \in \X$} 
\State Set $\tau_{\ell} = T_{1} + ... + T_{\ell-1}$
\For{$t = \tau_\ell, \ldots, \tau_\ell+T_\ell$}
\State $\bm{x}_s^{t+1} = \textrm{prox}_x(\tau\bm{K}_s[\bm{v}^{\ell}]^\top\bm{y}^t_s, \bm{x}^t_s)$
\State $\bm{y}_s^{t+1} = \textrm{prox}_y(\sigma\bm{K}_s[\bm{v}^{\ell}](2\bm{x}^{t+1}_s - \bm{x}^t_s), \bm{y}^t_s)$
\EndFor
\State $S_\ell = \sum_{t = \tau_\ell}^{\tau_{\ell}+T_{\ell}} \omega_t$ \label{alg:normalization const}
\State $\bar{\bm{x}}^{\ell}_{s} = \sum_{t = \tau_\ell}^{\tau_{\ell}+T_{\ell}} \frac{\omega_t}{S_\ell} \bm{x}_s^t, \bar{\bm{y}}^{\ell}_{s} = \sum_{t = \tau_\ell}^{\tau_{\ell}+T_{\ell}} \frac{\omega_t}{S_\ell}\bm{y}_s^t$  \label{alg:PD-compute-averages}
\State  Update $v^{\ell+1}_{s} = F^{\bar{\bm{x}}^{\ell}_{s},\bar{\bm{y}}^{\ell}_{s}}(\bm{v}^{\ell})_{s}$. \label{alg:PD-udate-v-ell}
	 \EndFor
\EndFor
\State \textbf{Output} $\bar{\bm{x}}^{T}_{s} = \sum_{t = 1}^{T} \frac{\omega_t}{S_T} \bm{x}_s^t, \bar{\bm{y}}^{T}_{s} = \sum_{t = 1}^{T} \frac{\omega_t}{S_T}\bm{y}_s^t, \forall s \in \mathbb{S}$  
 
\end{algorithmic}
\end{algorithm}
At each epoch $\ell$, we have some current estimate $v^\ell$ of the value vector, which is used to construct the payoff matrix $\bm{K}^\ell$ for the $\ell$'th BSPP. For each state $s$, we then run $T_\ell$ iterations of PDA, where, crucially, the first such iteration starts from the last iterates $(\bm{x}_{\tau_{\ell}},\bm{y}_{\tau_{\ell}})$ generated at the previous epoch. The average iterate constructed from just these $T_\ell$ iterations is then used to construct the next value vector $v^{\ell+1}_s$ via an approximate VI update (lines \ref{alg:PD-compute-averages} and \ref{alg:PD-udate-v-ell}). Finally, after the last epoch $k$, we output the average of all the iterates generates across all the epochs, using the weights $\omega_{1}, ..., \omega_{T}$.

We prove that FOM-VI satisfies the following convergence rate. We state our results for the two special cases where the norms $\| \cdot \|_X$ and $\| \cdot \|_Y$ are both equal to the $\ell_1$ norm (we call this the \emph{$\ell_1$ setup}) or $\ell_2$ norm (\emph{$\ell_2$ setup}) on the spaces $\Delta(A), \mathbb{P}_s$. FOM-VI could also be instantiated with other norms.
The proof is in Appendix~\ref{app:PD-ABCDE}.
\begin{theorem} \label{thm:fom-vi rate} 
  Assume that the stepsizes $\tau, \sigma$ are such that \eqref{eq:step-size-general} holds, and for each epoch $\ell$, we set $T_\ell = \ell^q$ for some $q \in \mathbb{N}$.
 Let $\bar{\bm{x}}^{T},\bar{\bm{y}}^{T}$ be the averages of the FOM-VI iterates using the weights $w_{1},...,w_{T}$. 
Then for all states $s \in \X$,
\begin{align*}
 \max_{\bm{y} \in \PP_{s}} F^{\bar{\bm{x}}^{T},\bm{y}}(\bm{v}^{*})_{s} - \min_{\bm{x} \in \Delta(A)} F^{\bm{x}, \bar{\bm{y}}^{T}}(\bm{v}^{*})_{s} 
  \leq  O\left( C R_{X}R_{Y} \left( \dfrac{\Theta_{X}}{\tau} + \dfrac{\Theta_{Y}}{\sigma} \right) \left( \dfrac{\lambda^{T^{1/(q+1)}}}{T^{1/(q+1)}} + \dfrac{1}{T^{q/(q+1)}} \right) \right),
\end{align*}
with $C=1$ in the $\ell_1$ setup, and $C=\sqrt{S}$ in the $\ell_2$ setup.
\end{theorem}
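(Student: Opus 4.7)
The plan is to adapt the stationary Primal--Dual Algorithm (PDA) convergence analysis to the sequence of changing payoff matrices $\bm{K}_{s}^{\ell}=\bm{K}_{s}[\bm{v}^{\ell}]$ produced by the inner loop, and then to close the loop via an approximate Value Iteration argument on $\bm{v}^{\ell}$. Starting from the per-iteration PDA descent lemma applied to the \emph{fixed} matrix $\bm{K}_{s}^{\ell}$, for every iteration $t$ in epoch $\ell$ and every comparator $(\bm{x},\bm{y})\in\Delta(A)\times\PP_{s}$, I would write
\[
\omega_{t}\bigl[F^{\bm{x}^{t+1},\bm{y}}(\bm{v}^{\ell})_{s}-F^{\bm{x},\bm{y}^{t+1}}(\bm{v}^{\ell})_{s}\bigr]\le\omega_{t}\bigl(\tfrac{1}{\tau}\Delta_{X,t}+\tfrac{1}{\sigma}\Delta_{Y,t}\bigr)+E_{t},
\]
where $\Delta_{X,t}=D_{X}(\bm{x},\bm{x}^{t}_{s})-D_{X}(\bm{x},\bm{x}^{t+1}_{s})$ (analogously for $Y$) and $E_{t}$ collects the cross-terms coming from the $2\bm{x}^{t+1}-\bm{x}^{t}$ extrapolation. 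Under \eqref{eq:step-size-general} the $E_{t}$'s telescope, and summing over $t\in[\tau_{\ell},\tau_{\ell}+T_{\ell}]$ yields a per-epoch bound in which only the Bregman divergences at the \emph{endpoints} of the epoch survive.

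The key structural observation is that epoch $\ell+1$ is warm-started at the last iterate of epoch $\ell$, so when I sum the per-epoch bounds across $\ell=1,\dots,k$ the endpoint Bregman terms telescope across epoch boundaries and only one initialization term of order $\Theta_{X}/\tau+\Theta_{Y}/\sigma$ survives. What remains is an accumulated gap measured against the \emph{varying} value vectors $\bm{v}^{\ell}$; to convert it into a gap at $\bm{v}^{*}$ I use the perturbation identity $F^{\bm{x},\bm{y}}(\bm{v})_{s}-F^{\bm{x},\bm{y}}(\bm{v}^{*})_{s}=\lambda\sum_{a}x_{sa}\bm{y}_{sa}^{\top}(\bm{v}-\bm{v}^{*})$ together with Hölder/Cauchy--Schwarz, giving
\[
\bigl|F^{\bm{x},\bm{y}}(\bm{v}^{\ell})_{s}-F^{\bm{x},\bm{y}}(\bm{v}^{*})_{s}\bigr|\le\lambda CR_{X}R_{Y}\|\bm{v}^{\ell}-\bm{v}^{*}\|_{\infty},
\]
with $C=1$ in the $\ell_{1}$ setup and $C=\sqrt{S}$ in the $\ell_{2}$ setup (by equivalence of norms on $\R^{S}$).

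It remains to control $\|\bm{v}^{\ell}-\bm{v}^{*}\|_{\infty}$. Applied within a single epoch with the comparator equal to the true minimax pair at $\bm{v}^{\ell}$, the first step's bound shows that $F^{\bar{\bm{x}}^{\ell},\bar{\bm{y}}^{\ell}}(\bm{v}^{\ell})_{s}$ approximates $F(\bm{v}^{\ell})_{s}$ with error $\varepsilon_{\ell}=O(\omega_{T_{\ell}}/S_{\ell})=O(1/T_{\ell})=O(\ell^{-q})$. Unrolling the approximate Bellman recursion then gives
\[
\|\bm{v}^{k+1}-\bm{v}^{*}\|_{\infty}\le\lambda^{k+1}\|\bm{v}^{0}-\bm{v}^{*}\|_{\infty}+\sum_{\ell=1}^{k}\lambda^{k-\ell}\varepsilon_{\ell}=O(\lambda^{k}+k^{-q}).
\]
Plugging $T_{\ell}=\ell^{q}$ so that the total iteration count $T=\sum_{\ell}\ell^{q}$ is of order $k^{q+1}$ (hence $k$ is of order $T^{1/(q+1)}$ and the largest epoch has length of order $T^{q/(q+1)}$), and weighting the accumulated gap by $\omega_{T}/S_{T}$, the two surviving summands match the claimed bound exactly: the $\lambda^{T^{1/(q+1)}}/T^{1/(q+1)}$ term comes from the residual geometric VI decay, and the $T^{-q/(q+1)}$ term is the final-epoch PDA rate. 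The main obstacle is the cross-epoch telescoping: the classical PDA argument is tied to a single fixed matrix, so I must verify that the extrapolation cross-terms $E_{t}$ still cancel at the boundaries despite the switch from $\bm{K}_{s}^{\ell}$ to $\bm{K}_{s}^{\ell+1}$. The warm-start is essential here; without it, the cumulative ``restart'' cost would add an $O(k)=O(T^{1/(q+1)})$ overhead that would destroy the rate. A secondary subtlety is that $\varepsilon_{\ell}$ itself is a consequence of the PDA inequality, so the per-epoch comparator must be chosen as the \emph{true} minimax pair at $\bm{v}^{\ell}$ for the approximate VI recursion to be valid.
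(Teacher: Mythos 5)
Your proposal follows essentially the same route as the paper's proof in Appendix~\ref{app:PD-ABCDE}: decompose the gap at $\bm{v}^{*}$ into the PDA gap against the changing matrices $\bm{K}^{\ell}$ plus a perturbation term controlled by $L_{\bm{K}^{*}-\bm{K}^{\ell}}\le C\|\bm{v}^{*}-\bm{v}^{\ell}\|_{\infty}$, telescope across epochs via the warm start, feed the per-epoch error $O(\Omega/T_{\ell})$ into the approximate-VI recursion, and balance $T_{\ell}=\ell^{q}$. The only imprecisions are minor: the cross-epoch boundary terms do not fully cancel but leave residuals proportional to $L_{\bm{K}^{\ell}-\bm{K}^{\ell-1}}$ (the paper's $e_{2},e_{3}$, which are dominated by the stated rate), and the per-epoch accuracy $\varepsilon_{\ell}$ should be obtained from the worst-case comparator rather than only the minimax pair at $\bm{v}^{\ell}$.
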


\subsection{Tractable proximal setups for Algorithm \ref{alg:PD-RMDP}}\label{sec:tractable-prox-updates}

In the previous section we saw that FOM-VI instantiated with appropriate proximal setups yields an attractive convergence rate. For a given  proximal setup, the convergence rate in Theorem~\ref{thm:fom-vi rate} depends on the maximum-norm quantities $R_X, R_Y$ and the polytope diameter measures $\Theta_X,\Theta_Y$. However, another important issue was previously not discussed: in order to run FOM-VI we must compute the iterates $x_s^{t+1},y_s^{t+1}$, which means that the updates in \eqref{eq:pda_iterates} must be fast to compute (ideally in closed form).
We next present several tractable proximal setups for Algorithm \ref{alg:PD-RMDP}. 

\noindent
\textbf{Tractable updates for $\Delta(A)$.}
Since decision space for $x$ is a simplex, we can apply well-known results to get a proximal setup.
For the $\ell_2$ setup (i.e. where $\| \cdot \|_X = \| \cdot \|_2$), we can set $\psi_X(\bm{x}) = (1 / 2)\|\bm{x}\|_2^2$, in which case $D_X$ is the squared Euclidean distance. For this setup, $\Theta_X = 1$, and $\bm{x}_s^{t+1}$ can be computed in $A\log A$ time, using a well-known algorithm based on sorting~\citep{BenTal-Nemirovski}. 

For the $\ell_{1}$ setup, (i.e. where $\| \cdot \|_X = \| \cdot \|_1$), we set $\psi_X(\bm{x}) = \textsc{Ent}(\bm{x}) \defeq \sum_i x_i \log x_i$ (i.e. the negative entropy), in which case $D_X$ is the \emph{KL divergence}. The advantage of this setup is that the strong convexity is with respect to the $\ell_1$ norm, which makes the Lipschitz associated to the payoff matrix a constant (as opposed to $\sqrt{S}$ for the $\ell_2$ norm), while the polytope diameter is only $\Theta_X = \log A$. Finally, $\bm{x}_s^{t+1}$ can be computed in closed form. Thus, from a theoretical perspective, the $\ell_1$ setup is more attractive than the $\ell_2$ setup for $\Delta(A)$. This is well-known in the literature.

In all cases, $R_X = 1$, since $\bm{x}$ comes from a simplex.

\noindent
\textbf{Tractable updates for ellipsoidal uncertainty.}
The proximal updates for $y$ turn out to be more complicated. In the first place, they depend heavily on the form of $\mathbb{P}_s$. First, we present our results for the case where $\mathbb{P}_s$ is an ellipsoidal s-rectangular uncertainty set as in \eqref{eq:uncertainty-norm-2}.
We present both $\ell_1$ and $\ell_2$ setups.

In the  $\ell_2$ setup for ellipsoidal uncertainty, 
we let $\| \cdot \|_{Y} $ be the $\ell_{2}$ norm, and $ \psi_{Y}(\bm{y}) = (1/2)\|\bm{y}\|_2^2$.  The Bregman divergence $D_{Y}(\bm{y},\bm{y'})$ is then simply the squared Euclidean distance.
In this case, we get that $R_Y = \sqrt{A}$, since the squared norm of each individual simplex is at most one, and then we take the square root. The polytope diameter is $\Theta_Y = 2 A$ for the same reason. We show in Proposition~\ref{prop:prop-fast-y-update-ell2-ell1} below that the iterate $\bm{y}^{t+1}_s$ can be computed efficiently.

In the  $\ell_1$ setup for ellipsoidal uncertainty, 
we let $\| \cdot \|_{Y} $ be the $\ell_{1}$ norm, and $ \psi_{Y}(\bm{y}) = (A/2) \sum_{a=1}^A \textsc{Ent}(\bm{y}_a)$, where $\textsc{Ent}(\bm{y}_a)$ is the negative entropy function.  The Bregman divergence $D_{Y}(\bm{y},\bm{y'}) = (A/2) \sum_{a=1}^A \textsc{KL}(\bm{y}_a, \bm{y'}_a)$ is then a sum over KL divergences on each action.
In this case, we get that $R_Y = A$, since we are taking the $\ell_1$ norm over $A$ simplexes, while the polytope diameter is $\Theta_Y = A^2\log S$. 

Proposition \ref{prop:prop-fast-y-update-ell2-ell1} shows that for both our $\ell_{2}$-based and $\ell_{1}$-based setup for $y$, the next iterate can be computed efficiently.
We present a detailed proof in Appendix \ref{app:prop-fast-y-update}.
\begin{proposition}\label{prop:prop-fast-y-update-ell2-ell1}
For the $\ell_2$ setup, the proximal update \eqref{eq:pda_iterates} with uncertainty set \eqref{eq:uncertainty-norm-2} can be approximated up to $\epsilon$ in a number of arithmetic operations of $O\left( AS \log(S) \log(\epsilon^{-1}) \right)$.

For the $\ell_1$ setup,  the proximal update \eqref{eq:pda_iterates} with uncertainty set \eqref{eq:uncertainty-norm-2} can be approximated up to $\epsilon$ in a number of arithmetic operations of $O\left( AS \log^2(\epsilon^{-1})\right)$.
\end{proposition}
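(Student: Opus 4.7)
The plan is to attack both proximal updates by Lagrangianizing the single coupling constraint $\sum_{a} \tfrac{1}{2}\| \bm{y}_{sa} - \bm{y}^{0}_{sa} \|_{2}^{2} \leq \alpha$ with a scalar multiplier $\mu \geq 0$. Since $\bm{y}^{0}$ lies in the relative interior of $\PP_{s}$, Slater's condition holds and strong duality allows an exact reduction to a one-dimensional search over $\mu$. For any fixed $\mu$, the Lagrangian of the prox problem \eqref{eq:pda_iterates} decouples across actions, yielding $A$ independent subproblems each constrained only by a single simplex $\Delta(S)$. The outer loop is thus a bisection on $\mu$ requiring $O(\log(\epsilon^{-1}))$ function evaluations, where each evaluation solves the $A$ per-action subproblems and checks the residual $g(\mu) := \sum_{a}\tfrac{1}{2}\| \bm{y}_{sa}(\mu) - \bm{y}^{0}_{sa} \|_{2}^{2} - \alpha$; monotonicity of $g$ (required for bisection to converge) follows from an envelope-theorem argument applied to the strongly concave dualized objective.

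For the $\ell_{2}$ setup, each per-action subproblem reduces, after completing the square, to a Euclidean projection onto $\Delta(S)$ of a point that depends linearly on $\mu$, $\bm{y}^{t}_{sa}$, $\bm{g}_{y,sa}$, and $\bm{y}^{0}_{sa}$. The standard sorting-based algorithm for simplex projection (see e.g. \citet{BenTal-Nemirovski}) solves each such projection exactly in $O(S \log S)$ time. Combining $A$ such projections per evaluation of $\mu$ with the outer $O(\log(\epsilon^{-1}))$ bisection yields the stated $O(AS\log(S)\log(\epsilon^{-1}))$ complexity.

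For the $\ell_{1}$ setup, each per-action subproblem is an entropy-regularized maximization with an additional quadratic term. Writing the KKT conditions with a scalar multiplier $\lambda_{a}$ for the simplex constraint yields, coordinate by coordinate, a scalar equation of the form $y_{sa,i}\,e^{\beta y_{sa,i}} = c_{i}(\mu,\lambda_{a})$ with $\beta = 2\mu/A$, which is solved explicitly via the Lambert-$W$ function in $O(1)$ arithmetic operations per coordinate (Newton's method on $W$ converges quadratically). An inner bisection on $\lambda_{a}$ enforces $\sum_{i} y_{sa,i}(\lambda_{a}) = 1$; since each $y_{sa,i}$ is strictly decreasing in $\lambda_{a}$ this converges in $O(\log(\epsilon^{-1}))$ evaluations of total $O(S)$ cost each, giving $O(S\log(\epsilon^{-1}))$ per action. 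Multiplied by $A$ actions and the outer bisection we recover the claimed $O(AS\log^{2}(\epsilon^{-1}))$.

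The main obstacles are (i) carefully verifying monotonicity of both bisection functionals, which rests on strict concavity of the inner subproblems contributed by either the squared Euclidean or the entropic prox term, and (ii) tracking the propagation of the inner $\epsilon$-precisions through the two nested bisections so that the final primal iterate is $\epsilon$-optimal. The boundary case $\mu = 0$ (inactive ellipsoidal constraint, i.e.\ $g(0) \leq 0$) must be checked separately; in the $\ell_{1}$ setting it collapses to the textbook entropic prox update $y_{sa,i} \propto y^{t}_{sa,i} \exp((2/A)\,g_{y,sa,i})$, while in the $\ell_{2}$ setting it becomes a single simplex projection per action.
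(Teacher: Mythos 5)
Your proposal is correct and follows essentially the same route as the paper: dualize the single ball constraint with a scalar multiplier $\mu$, exploit separability across actions into per-action simplex-constrained subproblems (sorting-based Euclidean projection in the $\ell_2$ setup; coordinate-wise Lambert-$W$ updates with an inner bisection on the simplex multiplier in the $\ell_1$ setup), and recover $\mu^{*}$ by an outer one-dimensional search on the dualized problem. The only detail the paper supplies that you elide is an explicit a priori upper bound $\bar{\mu}$ on the optimal multiplier (obtained by comparing the concave dual $q(\mu)$ against $q(0)$), which is what certifies that the outer bisection terminates in $O(\log(\epsilon^{-1}))$ iterations.
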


\noindent
\textbf{Tractable updates for KL uncertainty.}
As in the case of ellipsoidal uncertainty, we present both $\ell_1$ and $\ell_2$ setups for KL uncertainty. The setups are exactly the same as for ellipsoidal uncertainty (i.e. same norms, distance functions, and Bregman divergences), and all constants remain the same. The reason that all constants remain the same is because our bounds on the maximum norms and $\Theta_Y$, for both uncertainty set types, are based on bounding these values over the bigger set consisting of the Cartesian product of $A$ simplexes. The question thus becomes whether \eqref{eq:pda_iterates} can be computed efficiently (for $y$) when $D_{Y}$ is the sum over KL divergences on each action. We present our results in the following proposition; a detailed proof can be found in Appendix \ref{app:KL-uncertainty-set}.

\begin{proposition}\label{prop:prop-fast-y-update-KL}

For the $\ell_2$ setup, the proximal update \eqref{eq:pda_iterates} with uncertainty set \eqref{eq:KL-uncertainty-set} can be approximated up to $\epsilon$ in a number of arithmetic operations in $O\left( AS \log^2(\epsilon^{-1})\right)$.

For the $\ell_1$ setup, the proximal update \eqref{eq:pda_iterates} with uncertainty set \eqref{eq:KL-uncertainty-set} can be approximated up to $\epsilon$ in a number of arithmetic operations in $O\left( AS \log(\epsilon^{-1}) \right)$.
\end{proposition}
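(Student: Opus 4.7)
The plan is to handle the two setups separately via Lagrangian duality on the aggregate KL constraint $\sum_a \textsc{KL}(\bm{y}_{sa},\bm{y}^0_{sa})\le \alpha$. Since $\bm{y}^0$ is strictly feasible (the constraint is satisfied with value $0$), Slater's condition holds and strong duality applies, so we may introduce a single multiplier $\mu\ge 0$ and search for it by bisection on the complementary slackness condition. The problem then decomposes across actions $a$, and we only need to solve one-dimensional search problems on each per-action block.

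\textbf{The $\ell_1$ setup.} Here $D_Y(\bm{y},\bm{y}')=(A/2)\sum_a \textsc{KL}(\bm{y}_{sa},\bm{y}'_{sa})$, so the Lagrangian
\[
\sum_a\Bigl[\langle \bm{g}_{y,a},\bm{y}_{sa}\rangle - \tfrac{A}{2}\textsc{KL}(\bm{y}_{sa},\bm{y}^t_{sa}) - \mu\,\textsc{KL}(\bm{y}_{sa},\bm{y}^0_{sa})\Bigr] + \mu\alpha
\]
combines two KL terms that share $\bm{y}_{sa}$ as the first argument. Collecting the linear-in-$\log y_{sa,i}$ terms shows that the sum equals a single KL divergence $(A/2+\mu)\,\textsc{KL}(\bm{y}_{sa},\tilde{\bm{y}}_{sa})$ against the normalized geometric mixture $\tilde y_{sa,i}\propto (y^t_{sa,i})^{A/(A+2\mu)}(y^0_{sa,i})^{2\mu/(A+2\mu)}$, up to an additive constant. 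Each per-action subproblem is thus an entropy-regularized linear program over the simplex, whose maximizer is the softmax $y_{sa,i}\propto \tilde y_{sa,i}\exp\bigl(g_{y,a,i}/(A/2+\mu)\bigr)$, computable in $O(S)$ per action. Outer bisection on $\mu$ uses the fact that $\mu\mapsto \sum_a \textsc{KL}(\bm{y}_{sa}(\mu),\bm{y}^0_{sa})$ is monotone nonincreasing in $\mu$; this requires $O(\log(\epsilon^{-1}))$ iterations, each costing $O(AS)$, for a total of $O(AS\log(\epsilon^{-1}))$.

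\textbf{The $\ell_2$ setup.} Now $D_Y$ is $\tfrac12\|\bm{y}-\bm{y}'\|_2^2$, so the per-action Lagrangian subproblem is
\[
\max_{\bm{y}_{sa}\in\Delta(S)}\ \langle \bm{g}_{y,a},\bm{y}_{sa}\rangle - \tfrac12\|\bm{y}_{sa}-\bm{y}^t_{sa}\|_2^2 - \mu\,\textsc{KL}(\bm{y}_{sa},\bm{y}^0_{sa}),
\]
which no longer combines cleanly into a single closed-form update. Writing the KKT stationarity condition with a simplex multiplier $\nu_a$ gives $y_{sa,i}+\mu\log y_{sa,i}=\beta_{a,i}(\nu_a)$ for an explicit right-hand side. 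Since $y\mapsto y+\mu\log y$ is strictly increasing, each $y_{sa,i}$ is recovered as a monotone function of $\nu_a$ (in $O(1)$ via a Lambert-$W$ evaluation), and $\nu_a$ is determined by an inner bisection enforcing $\sum_i y_{sa,i}=1$. Each inner bisection costs $O(S\log(\epsilon^{-1}))$, and the outer bisection on $\mu$ (again monotone in the KL residual) contributes another $O(\log(\epsilon^{-1}))$ factor, yielding the stated $O(AS\log^2(\epsilon^{-1}))$ bound.

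\textbf{Main obstacle.} The $\ell_1$ case is straightforward once one notices the geometric-mixture trick. The principal difficulty, and the step I would spend the most care on, is the $\ell_2$ case: I must verify that both the inner map $\nu_a\mapsto \sum_i y_{sa,i}(\nu_a;\mu)$ and the outer map $\mu\mapsto\sum_a\textsc{KL}(\bm{y}_{sa}(\mu),\bm{y}^0_{sa})$ are monotone (so that bisection converges linearly), that an a priori bracket of size $\mathrm{poly}(S,A,1/(1-\lambda))$ exists for each bisection (so that only $O(\log(\epsilon^{-1}))$ iterations suffice), and that the $\epsilon$-accurate solutions from the nested bisections compose into an overall $\epsilon$-accurate proximal point. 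The details for Proposition~\ref{prop:prop-fast-y-update-ell2-ell1}, deferred to Appendix~\ref{app:prop-fast-y-update}, should carry over almost verbatim since the structure of the bisection arguments is identical.
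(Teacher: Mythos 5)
Your proposal is correct and follows essentially the same route as the paper: introduce a single Lagrange multiplier for the aggregate KL constraint, decompose over actions, observe that in the $\ell_1$ setup the two KL terms merge into one (giving a closed-form entropy-prox per action and a single outer bisection), and that in the $\ell_2$ setup each per-action subproblem reduces to the linear-plus-entropy-plus-quadratic form of \eqref{eq:sub-problem-a}, solved by Lambert-$W$ evaluations with an inner bisection on the simplex multiplier nested inside the outer bisection on $\mu$. Your write-up is in fact more explicit than the paper's one-paragraph outline (which defers all mechanics to Appendix~\ref{app:prop-fast-y-update}), and the verification points you flag at the end --- monotonicity of both bisection maps, a priori brackets, and composition of the nested accuracies --- are exactly the details that carry over from the ellipsoidal case.
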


\begin{remark} At a cursory reading, our results in Propositions \ref{prop:prop-fast-y-update-ell2-ell1} and \ref{prop:prop-fast-y-update-KL} may seem similar to those of \cite{Nilim} and \cite{Iyengar}. Both authors introduce bisection algorithms for computing Bellman updates, but these are for the simpler case of $(s,a)$-rectangular uncertainty sets. In that case, the Bellman updates can be computed by enumerating the set of actions $a \in \A$, since an optimal solution exists among the set of pure actions. In contrast, in our setting the optimal $x\in \Delta(A)$ may require randomization, which is why we must solve a min-max problem as in \eqref{eq:T_max-def}.
\end{remark}

\subsection{Complexity of Algorithm \ref{alg:PD-RMDP}}
Armed with our various proximal setups, we can finally state the performance guarantees provided by FOM-VI explicitly for the various setups. Since the constants for the $\ell_1$ and $\ell_2$ setups are the same for both KL and ellipsoidal uncertainty sets, we start by stating a single theorem which gives a bound on the error after $T$ iterations for either type of uncertainty set.
The following theorem works for any polynomial scheme for choosing the iterate weights when averaging, as well as how many FOM iterations to perform in-between each VI update. Details are given in Appendix \ref{app:complexity-table}. 
\begin{theorem}\label{th:main-complexity} Let $p,q \in \N$ and at time step $t \geq 0$, let  the iterate weight be $\omega_{t} = t^{p}$, and the number of FOM iterations at epoch $\ell$ be $T_{\ell} = \ell^{q}.$ After $T$ iterations of Algorithm \ref{alg:PD-RMDP}, $ \max_{\bm{y} \in \PP_{s}} F^{\bar{\bm{x}}^{T},\bm{y}}(\bm{v}^{*})_{s} - \min_{\bm{x} \in \Delta(A)} F^{\bm{x}, \bar{\bm{y}}^{T}}(\bm{v}^{*})_{s} $ is upper bounded by
\begin{itemize}
\item $O \left(A^{2} \sqrt{\dfrac{\log(S)}{\log(A)}} \left( \dfrac{1}{T^{q/(q+1)}}+ \dfrac{\lambda^{T^{1/(q+1)}} }{T^{1/(q+1)}} \right) \right)$ in the $\ell_{1}$ setup,
\item $O \left( AS \left( \dfrac{1}{T^{q/(q+1)}}+ \dfrac{\lambda^{T^{1/(q+1)}} }{T^{1/(q+1)}} \right)  \right)$ in the $\ell_{2}$ setup.
\end{itemize}
\end{theorem}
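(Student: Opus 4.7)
The plan is to obtain Theorem \ref{th:main-complexity} as a direct corollary of Theorem \ref{thm:fom-vi rate}: substitute the specific values of $R_{X},R_{Y},\Theta_{X},\Theta_{Y}$ listed in Section \ref{sec:tractable-prox-updates} in each of the two proximal setups, bound the Lipschitz constant $L_{\bm{K}}$ of the payoff matrices that arise during the algorithm, and optimize the stepsizes $\tau,\sigma$ subject to \eqref{eq:step-size-general}. Since the proximal-setup constants $R_{X},R_{Y},\Theta_{X},\Theta_{Y}$ hold uniformly over $s$ and over epochs, they can be plugged in state-by-state without extra work.

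The first step is to collect the constants. In the $\ell_{1}$ setup, the (scaled) negative-entropy DGFs give $R_{X}=1,\Theta_{X}=\log A$ on $\Delta(A)$, and $R_{Y}=A,\Theta_{Y}=A^{2}\log S$ on $\mathbb{P}_{s}$, with $C=1$. In the $\ell_{2}$ setup, the squared-Euclidean DGFs give $R_{X}=1,\Theta_{X}=1,R_{Y}=\sqrt{A},\Theta_{Y}=2A$, with $C=\sqrt{S}$.

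The second step is to bound $L_{\bm{K}_{s}^{\ell}}=\sup_{\|\bm{x}\|_{X}\leq 1,\|\bm{y}\|_{Y}\leq 1}\langle \bm{K}_{s}^{\ell}\bm{x},\bm{y}\rangle$ uniformly across epochs. Each entry of $\bm{K}_{s}^{\ell}$ involves $\lambda v^{\ell}_{s'}$, and $\|\bm{v}^{\ell}\|_{\infty}\leq 1/(1-\lambda)$ by induction on $\ell$ using contractivity of $F$; hence every entry is of order $O(1/(1-\lambda))$. In the $\ell_{1}$ setup, $L_{\bm{K}_{s}^{\ell}}$ equals the maximum absolute entry, hence $L_{\bm{K}_{s}^{\ell}}=O(1)$. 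In the $\ell_{2}$ setup, $L_{\bm{K}_{s}^{\ell}}$ is the spectral norm; using $\|\bm{M}\|_{2\to 2}\leq \sqrt{\|\bm{M}\|_{1\to 1}\|\bm{M}\|_{\infty\to\infty}}$ together with the entrywise bound and the $A\times AS$ shape of the matrix gives $L_{\bm{K}_{s}^{\ell}}=O(\sqrt{S})$. The formal version of these bounds is Appendix \ref{app:lem-norm-K}.

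The third step is to tune the stepsizes. With $L_{f}=0$ (the objective is bilinear plus a linear term), taking $\tau=\sqrt{\Theta_{X}/\Theta_{Y}}/L_{\bm{K}}$ and $\sigma=\sqrt{\Theta_{Y}/\Theta_{X}}/L_{\bm{K}}$ meets \eqref{eq:step-size-general} at equality and achieves the AM-GM optimum $\Theta_{X}/\tau+\Theta_{Y}/\sigma = 2L_{\bm{K}}\sqrt{\Theta_{X}\Theta_{Y}}$. Substituting into Theorem \ref{thm:fom-vi rate} leaves a prefactor
\[
CR_{X}R_{Y}\cdot 2L_{\bm{K}}\sqrt{\Theta_{X}\Theta_{Y}}\,,
\]
multiplied by the $T$-dependent tail $\lambda^{T^{1/(q+1)}}/T^{1/(q+1)}+1/T^{q/(q+1)}$. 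Plugging in the constants from step one and the $L_{\bm{K}}$ bounds from step two, this prefactor reduces to $O(A^{2}\sqrt{\log(S)/\log(A)})$ in the $\ell_{1}$ setup and $O(AS)$ in the $\ell_{2}$ setup, which is the claim. The weight exponent $p$ does not appear explicitly because, for polynomial weights $\omega_{t}=t^{p}$, its effect is already absorbed in the constants of Theorem \ref{thm:fom-vi rate} via the standard weighted-ergodic PDA analysis (cf.\ Appendix \ref{app:complexity-table}). The one genuine obstacle is establishing the uniform Lipschitz bound on $L_{\bm{K}_{s}^{\ell}}$ across all epochs, which relies on the boundedness of the $\bm{v}^{\ell}$ sequence; everything else is bookkeeping.
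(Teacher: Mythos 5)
Your proposal follows essentially the same route as the paper's own proof in Appendices~\ref{app:PD-ABCDE} and~\ref{app:complexity-table}: uniformly bound $L_{\bm{K}^{\ell}}$ via $\|\bm{v}^{\ell}\|_{\infty} \leq r_{\infty}(1-\lambda)^{-1}$ (Lemma~\ref{lem:norm-k} plus boundedness of the iterates), balance the stepsizes so that $\Theta_{X}/\tau = \Theta_{Y}/\sigma$, and substitute the proximal-setup constants into the bound of Theorem~\ref{thm:fom-vi rate}. One small bookkeeping remark: your own prefactor formula $C R_X R_Y \cdot 2 L_{\bm{K}}\sqrt{\Theta_X \Theta_Y}$ evaluates in the $\ell_1$ setup to $O\bigl(A^{2}\sqrt{\log(A)\log(S)}\bigr)$ rather than the stated $O\bigl(A^{2}\sqrt{\log(S)/\log(A)}\bigr)$ --- a $\log(A)$-factor discrepancy that is equally present in the paper's own computation of $\Omega$ in Appendix~\ref{app:conv-ell-1}, so it does not distinguish your argument from the paper's.
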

The careful reader may notice that the choice of $p \in \mathbb{N}$ in our polynomial averaging scheme does not figure in the bound of Theorem~\ref{th:main-complexity}: any valid choice of $p$ leads to the same bound. However, in practice the choice of $p$ turns out to be very important as we shall see later. 
Secondly, the reader may notice an interesting dependence on $q$: the term $O(1/T^{q/(q+1)})$ gets better as $q$ increases; while larger $q$ worsens the exponential rate with base $\lambda$ in the term $O(\lambda^{T^{1/(q+1)}} /T^{1/(q+1)})$. For any fixed $q$, the dominant term is $O(1/T^{q/(q+1)})$.

\noindent \textbf{Complexity for ellipsoidal uncertainty sets.}
We will now combine Proposition~\ref{prop:prop-fast-y-update-ell2-ell1}, which gives the cost per iteration of FOM-VI, with Theorem~\ref{th:main-complexity}, to get a total complexity of FOM-VI when considering both the number of iterations and cost per iteration. 

First, let us consider $q=2$, which is the setup we will focus on in our experiments. The complexity of the $\ell_{1}$ setup is $O \left( A^{4}S^{2}  \left( \dfrac{\log(S)}{\log(A)}\right)^{0.75} \log^2(\epsilon^{-1}) \epsilon^{-1.5} \right)$ and for the $\ell_{2}$ setup it is $O \left(A^{2.5}S^{3.5} \log(S) \log(\epsilon^{-1}) \epsilon^{-1.5} \right).$ These results are better than the complexity of VI in terms of the  number of states and actions. This comes at the cost of the dependence on the desired accuracy $\epsilon$, which is worse  than for \ref{alg:VI}. This is of course expected when applying a first-order method rather than IPMs.
However, in practice we expect that our algorithms will be preferable when solving problems with large $A$ and $S$, as is often the case with first-order methods. Indeed, we find numerically  that this occurs for $S,A \geq 50$ on ellipsoidal uncertainty sets (see Section \ref{sec:simu}). 

Next, let us consider what happens as $q$ gets large. In that case, the complexity of the $\ell_{1}$ setup approaches $O \left( A^{3}S^{2} \left( \log(S) / \log(A)\right)^{0.5} \log^{2}(\epsilon^{-1})\epsilon^{-1} \right)$, while the complexity of the $\ell_{2}$ setup approaches $O \left( A^{2}S^{3} \log(S) \log(\epsilon^{-1}) \epsilon^{-1} \right).$ This last complexity result is $O(A^{1.5}S^{1.5})$ better than the VI complexity \eqref{eq:complexity-Kuhn} in terms of instance size.

Next let us compare the $\ell_1$ and $\ell_2$ setups. When $S=A$, the $\ell_{2}$ and $\ell_{1}$ setup have better dependence on number of states and actions than \ref{alg:VI} (by 2 order of magnitudes). If the number of actions $A$  is considered a constant, then the $\ell_{1}$ has better convergence guarantees than the $\ell_{2}$ setup. However, each proximal update in the $\ell_{1}$ setup requires two interwoven binary searches over Lagrange multipliers, which can prove time-consuming in practice, as we show in our numerical experiments.

\noindent \textbf{Complexity for KL uncertainty sets.}
Similarly to ellipsoidal uncertainty sets, we can analyze our performance on KL uncertainty sets. 
Again we combine Proposition \ref{prop:prop-fast-y-update-KL} with Theorem~\ref{th:main-complexity}. For $q=2$, the $\ell_{1}$ setup has complexity $O\left( A^{4}S^{2}  \left( \dfrac{\log(S)}{\log(A)}\right)^{0.75}\log(\epsilon^{-1})\epsilon^{-1.5} \right)$ for returning an $\epsilon$-optimal solution, while the $\ell_{2}$ setup has complexity $O\left( A^{2.5}S^{3.5}\log(\epsilon^{-1})\epsilon^{-1.5} \right)$. For large $q$, the complexity approaches $O\left( A^{3}S^{2} \left( \dfrac{\log(S)}{\log(A)}\right)^{0.5}\log(\epsilon^{-1})\epsilon^{-1} \right)$ for the $\ell_{1}$ setup and $O\left( A^{2}S^{3} \log(\epsilon^{-1})\epsilon^{-1} \right)$ for the $\ell_{2}$ setup.  To the best of our knowledge, this is the first algorithmic result for $s$-rectangular KL uncertainty sets.

Finally, note that in terms of storage complexity, all our setups
 only need to store the current value vector $\bm{v}^{\ell} \in \R^{S}$ and the running weighted average $(\bar{\bm{x}}^{\ell},\bar{\bm{y}}^{\ell})$ of the iterates. In total, we need to store $O\left(S^{2}A \right)$ coefficients, which is the same as the number of decision variables of a solution.


\section{Numerical experiments}\label{sec:simu}
In this section we study the performance of our approach numerically. We focus here on ellipsoidal uncertainty sets, where we can compare our methods to Value Iteration.  We present results for KL uncertainty sets in Appendix~\ref{app:simu-KL}.

  \begin{figure*}[htp]
  \begin{subfigure}{0.24\textwidth}
\centering
  \includegraphics[width=1.0\linewidth]{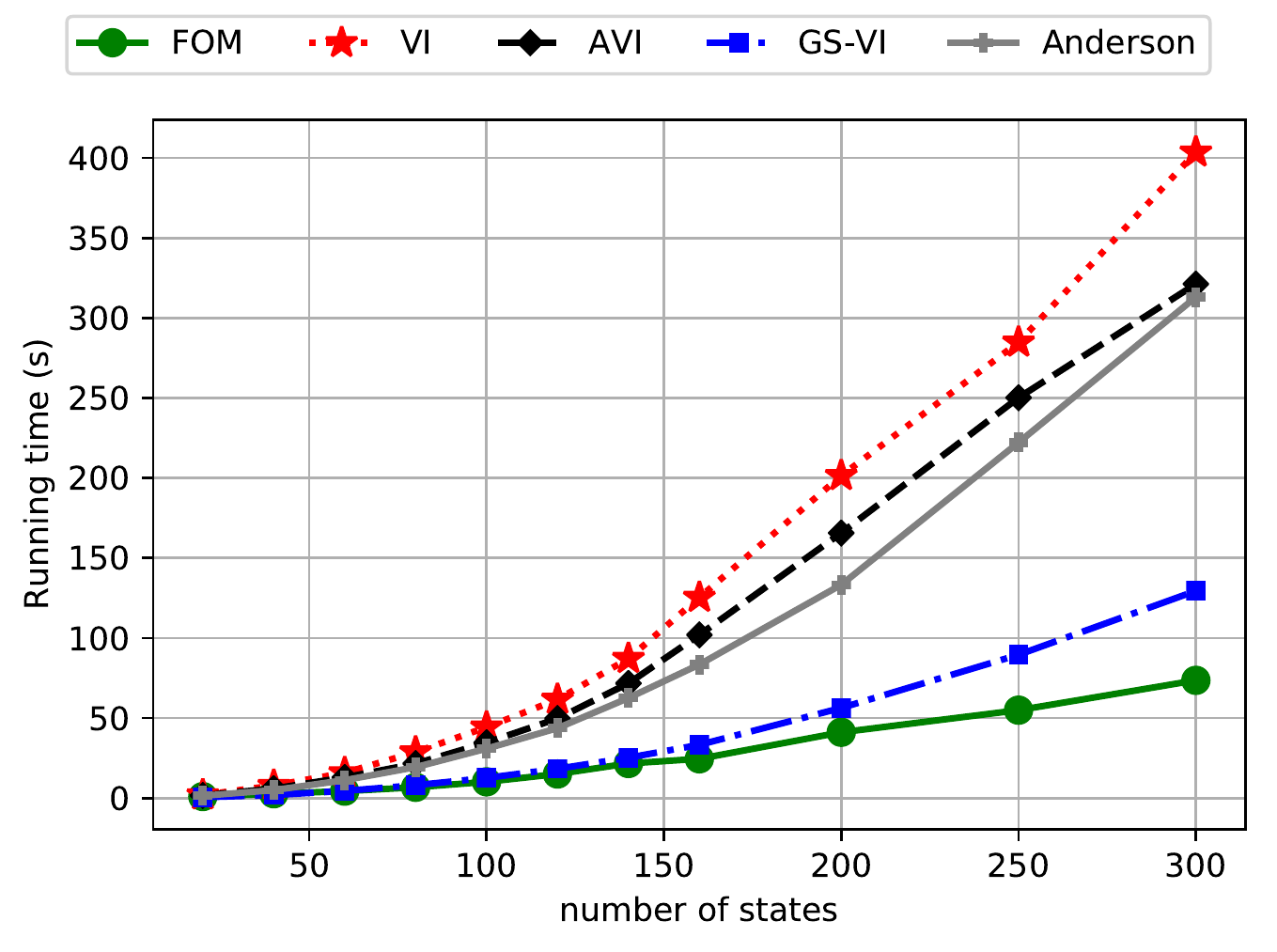}
  \captionof{figure}{Healthcare instance.}
  \label{fig:comparison_real_instance}
  \end{subfigure}
  \begin{subfigure}{0.24\textwidth}
\centering
  \includegraphics[width=1.0\linewidth]{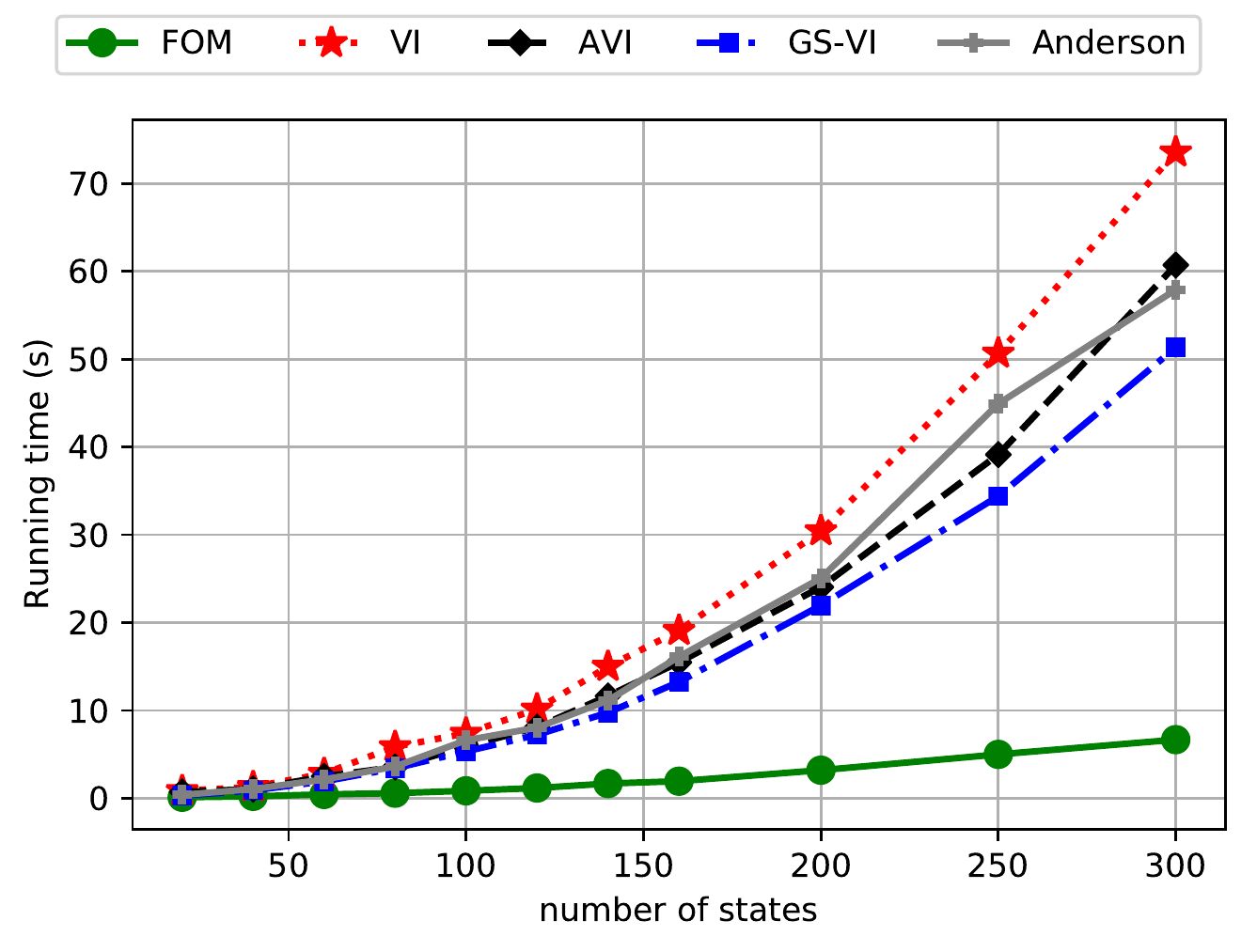}
  \captionof{figure}{Machine instance.}
  \label{fig:comparison_machine}
  \end{subfigure}
\begin{subfigure}{0.24\textwidth}
\centering
  \includegraphics[width=1.0\linewidth]{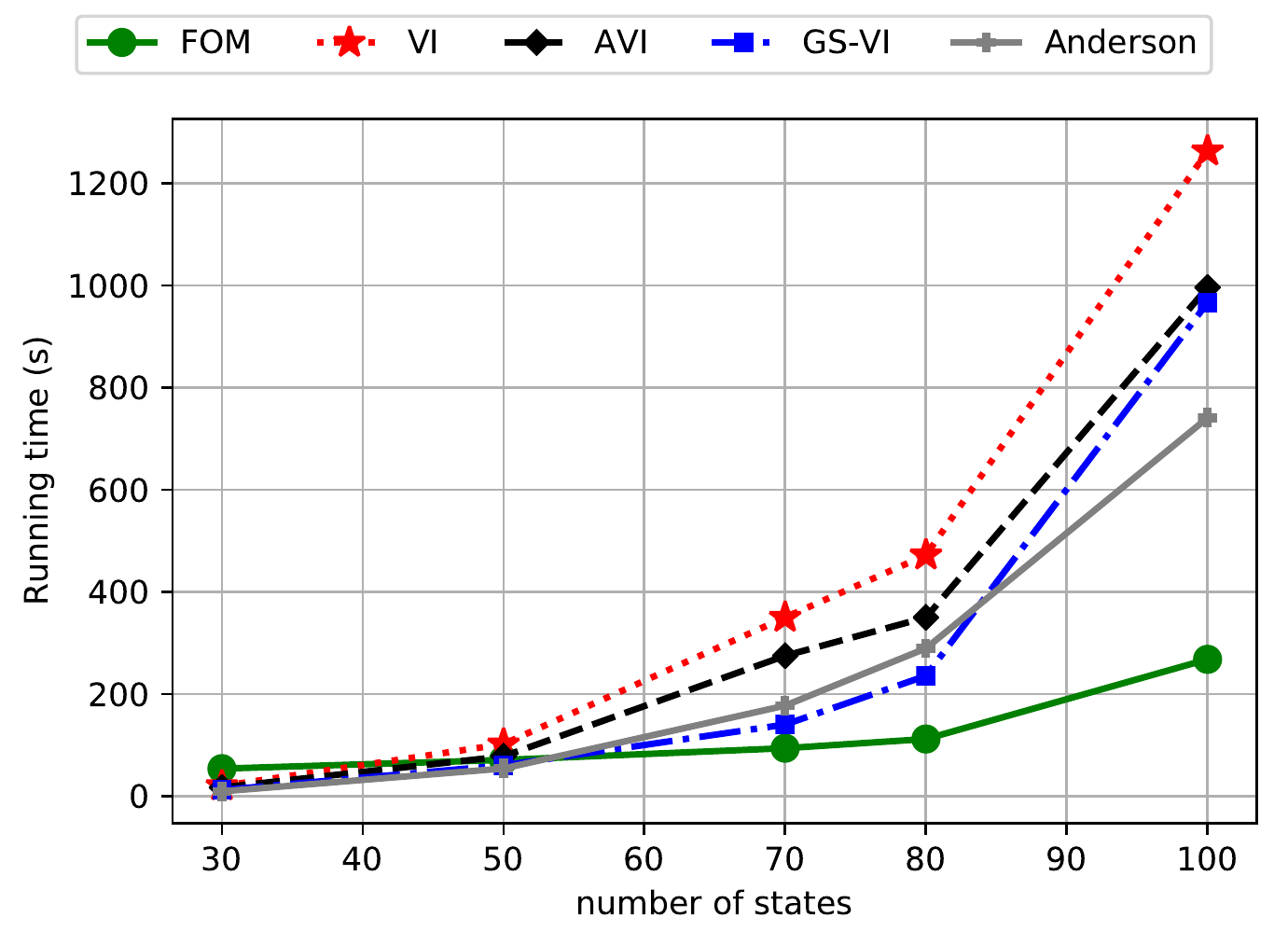}
  \captionof{figure}{ Garnet, high connectivity.}
  \label{fig:comparison_alg_all_small}
  \end{subfigure}
  \begin{subfigure}{0.24\textwidth}
\centering
  \includegraphics[width=1.0\linewidth]{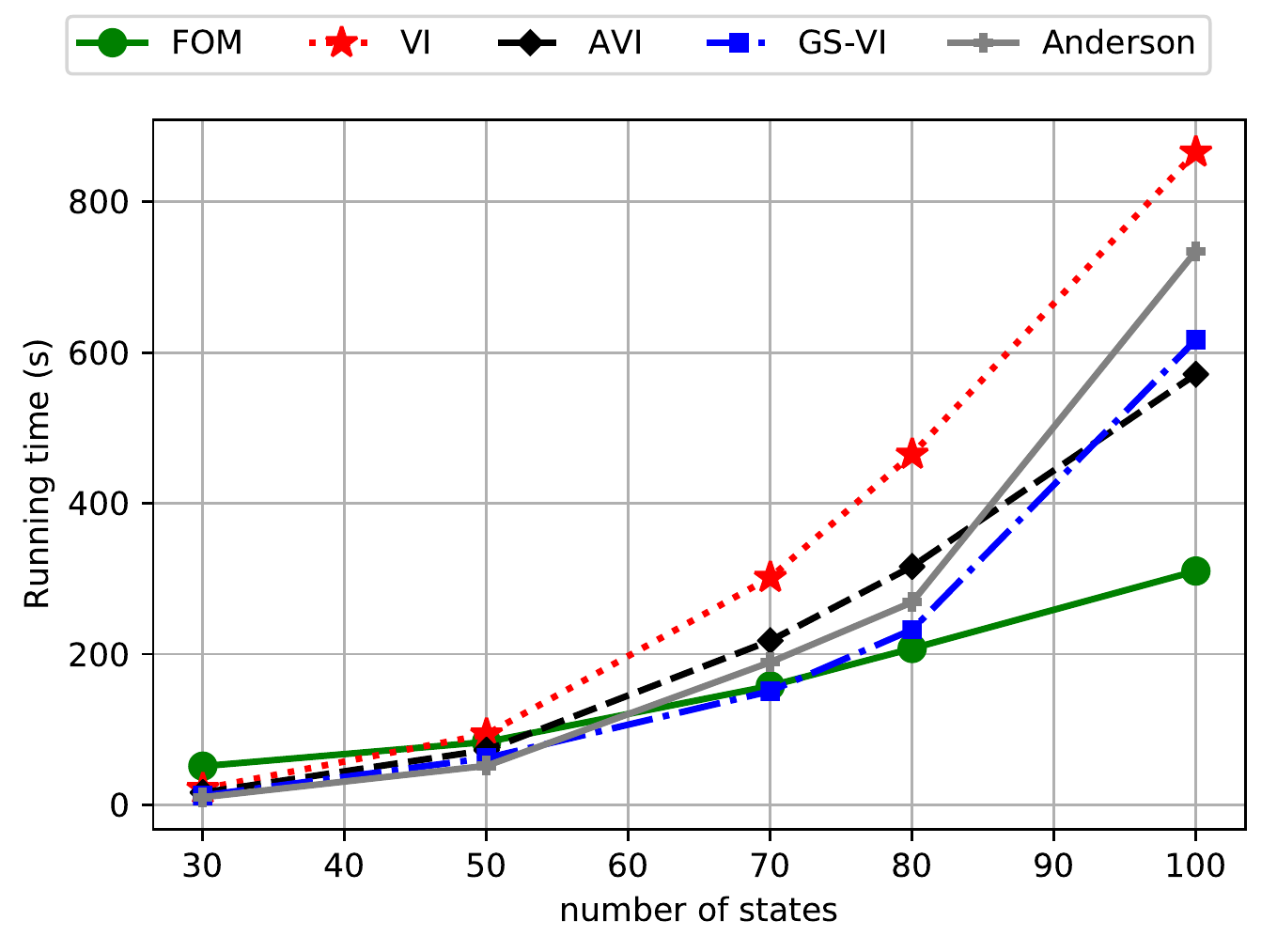}
  \captionof{figure}{ Garnet, low connectivity.}
  \label{fig:comparison_alg_all_small-low-density}
  \end{subfigure}
  \caption{Comparison of FOM-VI with variants of Value Iteration on four MDP domains.}
\end{figure*}

\noindent
\textbf{Duality gap in the robust MDP problem.}
For a given policy-kernel pair $(\bm{x},\bm{y})$, we measure the performance as the duality gap 
 $\text{(DG)}=\max_{\bm{y'} \in \PP} R(\bm{x},\bm{y'}) - \min_{\bm{x'} \in \Pi} R(\bm{x'},\bm{y}).$
Note that (DG)$\leq \epsilon$ implies that $\bm{x}$ is $2 \epsilon$-optimal in the robust MDP problem.

\noindent
\textbf{Best empirical setup of Algorithm \ref{alg:PD-RMDP}.}
For the sake of conciseness, our extensive comparisons of the various proximal setups and parameter choices ($p, q \in \N$) are presented in Appendix \ref{app:comparison-our-algorithm}. Here we focus on the conclusions. The proximal setup with the best empirical performance is the $\ell_{2}$ setup where $(\| \cdot \|_{X}, \| \cdot \|_{Y}) = (\ell_{2},\ell_{2}),$ even though its theoretical guarantees may be worse than the $\ell_{1}$ setup (for large state space); this is similar to the matrix-game setting \citep{GKG20}.
 For averaging the PD iterates, an increasing  weight scheme, i.e. $p \geq 1$ in $\omega_{t} = t^{p}$, is clearly stronger (this is again similar to the matrix-game setting). We also recommend setting $q=2$ (or even larger), as this leads to better empirical performance for the true duality gap (DG) in the settings where we could compute that duality gap. 

\subsection{Comparison with Value Iteration}
We present our comparisons with the \ref{alg:VI} algorithm in Figures \ref{fig:comparison_real_instance}-\ref{fig:comparison_alg_all_small-low-density}. We also compare FOM-VI with Gauss-Seidel VI (\textit{GS-VI}, \cite{Puterman}), Anderson VI (\textit{Anderson}, \cite{ref-c}), and \textit{Accelerated VI} (\textit{AVI}, \citet{GGC-AVI}), see Appendix \ref{app:simu}.
The y-axis shows the number of seconds it takes each algorithm to compute an $\epsilon$-optimal policy, for $\epsilon=0.1$.
 Following our analysis of the various setups for our algorithm, these plots focus on the $\ell_{2}$ setup with $(p,q)=(2,2)$.

\noindent\textbf{Empirical setup.} 
All the simulations are implemented in Python 3.7.3, and were performed on a laptop with 2.2 GHz Intel Core i7 and 8 GB of RAM. We use Gurobi 8.1.1 to solve any linear or quadratic optimization problems involved.  In order to obtain an $\epsilon$-solution of the robust MDP problem with the value iteration algorithms, we use the stopping condition $\| \bm{v}_{s+1} - \bm{v}_{s} \|_{\infty} \leq \epsilon \cdot (1-\lambda) \cdot (2 \lambda)^{-1}$ (Chapter 6.3 in \citet{Puterman}). We stop Algorithm \ref{alg:PD-RMDP} as soon as (DG) $\leq \epsilon /2 $. We initialize the algorithms with $\bm{v}_{0}=\bm{0}$.
At epoch $\ell$ of VI AVI and Anderson, we warm-start each computation of $F(\bm{v}^{\ell})$ with the optimal solution obtained from the previous epoch $\ell-1$. 

We consider two type of instances for our simulation. The first type of instances is inspired from real-life application and consists of a healthcare management instance and a machine replacement instance. The second type is based on random \textit{Garnet} MDPs, a class of random MDP instances widely used for benchmarking algorithms.

\noindent\textbf{Results for healthcare instances.} We consider an MDP instance inspired from a healthcare application. We model the evolution of a patient's health using a Markov chain, using a simplification of the models used in  \citet{Goh, grand2020robust}. Note that such a model is prone to errors as (i) the Markovian assumption is only an approximation of the true dynamics of the patient's health, (ii) the presence of unobservable confounders may introduce biases in our observed transitions. Therefore, it is important to account for model mispecification in this setting. More specifically, we consider an MDP where there are $S-1$ health states, one `\textit{mortality}' state and three actions (drug level), corresponding to high, medium and low drug levels. The state $1$ corresponds to a healthy condition while the state $S-1$ is more likely to lead to mortality. The goal of the decision maker is to prescribe a given drug dosage (low/high/medium) at every state, in order to keep the patient alive (avoiding the mortality state), while minimizing the invasiveness of the treatment.  We observe $N=60$ samples around the nominal kernel transitions, presented in Figures \ref{fig:MDP-1}-\ref{fig:MDP-3} in the appendices, and we construct ellipsoidal uncertainty sets with radius $\alpha=\sqrt{SA}$. Figure \ref{fig:comparison_real_instance} shows the results, where our algorithm outperforms \ref{alg:VI} by about one order of magnitude on this structured and simple MDP instance, even though GS-VI performs well better than \ref{alg:VI} too. Additionally, our algorithm scales much better with instance size.

 \noindent\textbf{Results for Machine Replacement Problems}
 We also consider a machine replacement problem studied by \citet{Delage} and \citet{Kuhn}. The problem is to design a replacement policy for a line of machines. The states of the MDP represent age phases of the machine and the actions represent different repair or replacement options. Even though the transition parameters can be estimated from historical data, one often does not have access to enough historical data to exactly assess the probability of a machine breaking down when in a given condition. Additionally, the historical data may contain errors; this warrants the use of a robust model for finding a good replacement policy.
 In particular, the machine replacement problem involves a machine whose set of possible conditions are described by $S$ states. There are two actions: \textit{repair} and \textit{no repair}.
The first $S-2$ states are operative states.   The states $1$ to $S-2$ model the condition of the machine, with $1$ being perfect condition and $S-2$ being worst condition. There is a cost of 0 for states $1, ..., S-3$; letting the machine reach the worst operative state $S-2$ is penalized with a cost of $20$. The last two states $S-1$ and $S$ are states representing when the machine is being repaired.  The state $S-1$ is a standard repair state and has a cost of 2, while the last state $S$ is a longer and more costly repair state and has cost 10. The initial distribution is uniform across states. Figures describing the MDP can be found in Appendix \ref{app:machine-instance}. On this instance, FOM-VI clearly outperforms every variants of \ref{alg:VI}, as seen on Figure \ref{fig:comparison_machine}.
%

\noindent\textbf{Random Garnet MDP instances.}
 We generate Garnet MDPs (Generalized Average Reward Non-stationary Environment Test-bench, \citet{garnet,bhatnagar2007naturalgradient}), which are an abstract class of MDPs parametrized by a branching factor $n_{branch}$, equal to the proportion of reachable next states from each state-action pair $(s,a)$. Garnet MDPs are a popular class of finite MDPs used for benchmarking algorithms for MDPs \citep{garnet-1,garnet-2,garnet-3}.
The parameter $n_{branch}$ controls the level of connectivity of the underlying Markov chains. We test our algorithm for high connectivity ($n_{branch}=50 \%$, Figure \ref{fig:comparison_alg_all_small}) and low connectivity ($n_{branch}=20 \%$, Figure \ref{fig:comparison_alg_all_small-low-density}) in our simulations. We draw the cost parameters at random uniformly in $[0,10]$ and we fix a discount factor $\lambda=0.8$. The radius $\alpha$ of the $\ell_{2}$ ball from the uncertainty set \eqref{eq:uncertainty-norm-2} is set to $\alpha = \sqrt{n_{branch} \times A }.$

In Figures \ref{fig:comparison_alg_all_small}-\ref{fig:comparison_alg_all_small-low-density}, we note that for smaller instances, the performance of FOM-VI is similar to both VI, AVI, GS-VI and Anderson. This is expected: our algorithm has worse convergence guarantees in terms of the dependence in $\epsilon$, but better guarantees in terms of the number of state-actions $S,A$. When the number of states and actions grows larger, FOM-VI performs significantly better than the three other methods.

%


\bibliographystyle{plainnat} 
\bibliography{FOM_RMDP}
\appendix

\newpage

\section{Proofs of Section \ref{sec:RMDP}}\label{app:proof-sec-lemmas}
\subsection{Some useful lemmas}
The next lemmas give bounds on some sums that appear in the proof of Proposition \ref{prop:rate-PD-general}. 
\begin{lemma}\label{lem:lambda-sum-1}
Let $\lambda \in (0,1)$ and $k, n \in \N$. Then
\[ \sum_{\ell=1}^{k} \lambda^{\ell} \ell^{n} \leq O \left( \dfrac{k^{n}\lambda^{k}}{(1-\lambda)^{n+1}} \right).\]
\end{lemma}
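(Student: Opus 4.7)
My approach is to leverage the operator identity $\sum_{\ell=0}^k \ell^n \lambda^\ell = \bigl(\lambda\tfrac{d}{d\lambda}\bigr)^n \sum_{\ell=0}^k \lambda^\ell$, exploiting the fact that $\lambda\tfrac{d}{d\lambda}$ acts as $\lambda^\ell \mapsto \ell\lambda^\ell$ on each monomial. Applying this operator $n$ times to the closed form of the geometric series $\sum_{\ell=0}^k \lambda^\ell = (1-\lambda^{k+1})/(1-\lambda)$ produces an explicit rational-plus-boundary expression whose terms I can then estimate directly.

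In more detail, I would split $(1-\lambda^{k+1})/(1-\lambda) = 1/(1-\lambda) - \lambda^{k+1}/(1-\lambda)$ and analyze the two pieces separately. On the first piece, repeated application of $\lambda\tfrac{d}{d\lambda}$ gives a polynomial in $\lambda$ of degree at most $n$ divided by $(1-\lambda)^{n+1}$, contributing a term of order $1/(1-\lambda)^{n+1}$. On the second piece, expanding by the product rule produces a sum of terms of the form $c_{i,j}\,\lambda^{k+1-j}\, k^{n-i}/(1-\lambda)^{i+1}$ for nonnegative integers $i, j \leq n$ with combinatorial coefficients $c_{i,j}$ depending only on $n$; the dominant such term is $k^n \lambda^{k+1}/(1-\lambda)^{n+1}$, and the remaining ones can be absorbed into it up to constants after using $k \geq 1$ and $1/(1-\lambda) \geq 1$.

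An alternative, more elementary route is induction on $n$ via Abel summation. The base case $n=0$ is just the closed-form geometric series. For the inductive step, apply summation by parts to shift an extra factor of $\ell$, then combine with the inductive hypothesis while carefully tracking the new boundary contribution generated by the Abel rearrangement; each step introduces a factor of $k$ and an extra $(1-\lambda)^{-1}$, matching the claim. Either route yields the bound; the main obstacle I anticipate is the careful bookkeeping needed to collect the various $k^a \lambda^b (1-\lambda)^{-c}$ contributions and to verify that, after combining, the dominant behaviour is the stated $k^n \lambda^k/(1-\lambda)^{n+1}$ — in particular, that the contribution coming from differentiating the $1/(1-\lambda)$ piece (which a priori lacks a $\lambda^k$ factor) is either dominated by the boundary term in the relevant regime or gets absorbed into the big-$O$ constant.
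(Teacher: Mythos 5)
Your route---hitting the closed form of the geometric series with an $n$-fold derivative, in your case the operator $\lambda\,\tfrac{d}{d\lambda}$---is essentially the same as the paper's own proof, and your version of the identity is actually the cleaner one (the paper's plain $n$-th derivative produces falling factorials and shifted powers $x^{\ell-n}$, which its sketch mis-states). The problem is that the obstacle you flag in your final sentence is not bookkeeping: it is a genuine gap, and it cannot be closed. Applying $(\lambda\,\tfrac{d}{d\lambda})^{n}$ to the $1/(1-\lambda)$ piece yields a term of order $1/(1-\lambda)^{n+1}$ carrying no factor of $\lambda^{k}$, and this term is neither dominated by $k^{n}\lambda^{k}/(1-\lambda)^{n+1}$ for large $k$ nor absorbable into the implied constant. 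Indeed the lemma as stated is false: for fixed $\lambda\in(0,1)$ and $n$, the left-hand side is at least $\lambda$ for every $k\ge 1$ (it increases to the positive limit $\sum_{\ell\ge 1}\ell^{n}\lambda^{\ell}$), while the right-hand side $k^{n}\lambda^{k}/(1-\lambda)^{n+1}$ tends to $0$ as $k\to\infty$, so no constant can make the inequality hold for all $k$. The paper's three-line sketch silently discards exactly this term, so the defect lies in the statement rather than in your execution; neither of your two escape hatches (``dominated in the relevant regime'' or ``absorbed into the big-$O$'') is available.

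What your computation does establish is the uniform-in-$k$ bound $\sum_{\ell=1}^{k}\lambda^{\ell}\ell^{n}\le C_{n}\bigl(1+k^{n}\lambda^{k}\bigr)/(1-\lambda)^{n+1}\le C_{n}'/(1-\lambda)^{n+1}$, and that is what the lemma should assert. Where the lemma is invoked (the bounds on $e_{2}$ and $e_{4}$ in the proof of Lemma \ref{prop:rate-PD-general}), the corrected bound turns those terms into $O(1/T^{p+1})$ rather than the claimed exponentially small quantities; since $1/T^{p+1}\le 1/T^{q/(q+1)}$, they remain dominated by $e_{5}$ and the final rate in Theorem \ref{thm:fom-vi rate} is unaffected. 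If you want a fully rigorous write-up, prove the corrected statement directly, e.g.\ via $\ell^{n}\le n!\binom{\ell+n}{n}$ and $\sum_{\ell\ge 0}\binom{\ell+n}{n}\lambda^{\ell}=(1-\lambda)^{-(n+1)}$, which avoids the derivative bookkeeping entirely.
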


\begin{proof}[Proof of Lemma \ref{lem:lambda-sum-1}]
Let us define $f: x \mapsto \dfrac{1-x^{k}}{1-x} = \sum_{\ell=1}^{k} x^{\ell}.$
Then $f^{(n)}(x) =  \sum_{\ell=1}^{k} x^{\ell} \ell (\ell-1) ... (\ell-n+1)$, and 
\[ \sum_{\ell=1}^{k} \lambda^{\ell} \ell^{n} = O \left(\sum_{\ell=1}^{k} x^{\ell} \ell (\ell-1) ... (\ell-n+1) \right). \]
We can conclude by computing the $n$-th derivative of $f$ as the $n$-th derivative of $x \mapsto \dfrac{1-x^{k}}{1-x} $.
\end{proof}

\begin{lemma}\label{lem:lambda-sum-4}
Let $\lambda \in (0,1)$ and $q \geq 0$. Then there exists a constant $M_{\lambda,q}$ such that
\[ \sum_{t=1}^{\ell} \dfrac{1}{t^{q}\lambda^{t}} \leq M_{\lambda,q} \dfrac{1}{\ell^{q} \lambda^{\ell}}.\]
\end{lemma}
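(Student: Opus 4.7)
The plan is to factor $\frac{1}{\ell^q \lambda^\ell}$ out of the left-hand side, which reduces the claim to showing that the quantity
\[ R_\ell \;\defeq\; \sum_{t=1}^{\ell} \left( \frac{\ell}{t} \right)^q \lambda^{\ell-t} \]
is bounded uniformly in $\ell$ by a finite constant $M_{\lambda,q}$ depending only on $\lambda$ and $q$. This reduction is immediate, since the $t$-th term of $R_\ell$ equals $\ell^q \lambda^\ell$ times the $t$-th term of the original sum. Thus the whole argument will reduce to uniformly bounding $R_\ell$.

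To bound $R_\ell$, I would split the sum at $t = \lceil \ell/2 \rceil$. On the ``large $t$'' piece $t > \ell/2$, the ratio $\ell/t$ is at most $2$, so the crude estimate $(\ell/t)^q \leq 2^q$ combined with the change of variables $u = \ell - t$ gives
\[ \sum_{t \in (\ell/2,\, \ell]} \left(\tfrac{\ell}{t}\right)^q \lambda^{\ell - t} \;\leq\; 2^q \sum_{u \geq 0} \lambda^u \;=\; \frac{2^q}{1-\lambda}. \]
On the ``small $t$'' piece $t \leq \ell/2$, I would use the two crude estimates $(\ell/t)^q \leq \ell^q$ (valid because $t \geq 1$) and $\lambda^{\ell - t} \leq \lambda^{\ell/2}$ (valid because $\ell - t \geq \ell/2$). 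Since there are at most $\ell/2$ such terms, this piece is bounded by $\tfrac{1}{2}\ell^{q+1}\lambda^{\ell/2}$.

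The key observation is then a standard polynomial-versus-exponential comparison: since $\lambda \in (0,1)$, the function $\ell \mapsto \ell^{q+1}\lambda^{\ell/2}$ tends to $0$ as $\ell \to \infty$, and therefore attains a finite supremum $C_{\lambda,q}$ over $\ell \in \N$. Setting $M_{\lambda,q} \defeq \frac{2^q}{1-\lambda} + C_{\lambda,q}$ completes the argument. This polynomial-versus-exponential step is the only point requiring any thought; the rest is a routine geometric-series calculation.
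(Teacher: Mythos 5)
Your proof is correct, and it takes a genuinely different and more elementary route than the paper's. The paper works with $x = 1/\lambda$ and the function $f_{\ell}(x) = \sum_{t=1}^{\ell} x^{t}/t^{q}$, introduces an intermediate point $x^{+} = 1/\lambda^{+}$ with $\lambda^{+} = (1+\lambda)/2$, controls $f_{\ell}(x) - f_{\ell}(x^{+})$ by integrating a bound on $f_{\ell}'$, and then disposes of $f_{\ell}(x^{+})$ via a four-way case analysis on $q$ ($q=0$, $q\in(0,1)$, $q=1$, $q\geq 1$), each case resting on the fact that $(x^{+}/x)^{\ell}\,\ell^{m} = o(1)$. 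You instead factor out the target $1/(\ell^{q}\lambda^{\ell})$ and split the resulting sum $R_{\ell} = \sum_{t=1}^{\ell}(\ell/t)^{q}\lambda^{\ell-t}$ at $t = \lceil \ell/2\rceil$: a geometric tail bound for the large-$t$ half and a crude count times $\ell^{q}\lambda^{\ell/2}$ for the small-$t$ half. The underlying engine is the same in both arguments (a polynomial factor cannot survive against a geometric decay, whether that decay is $(\lambda/\lambda^{+})^{\ell}$ or $\lambda^{\ell/2}$), but your version handles all real $q \geq 0$ uniformly, avoids the case analysis entirely, and sidesteps a minor imprecision in the paper's derivative bound (the step $f_{\ell}'(x) \leq \ell^{1-q}\sum_{t} x^{t-1}$ is only immediate for $q \leq 1$). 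Your argument is shorter, self-contained, and I would consider it the preferable proof.
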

\begin{proof}[Proof of Lemma \ref{lem:lambda-sum-4}.]
Let $\lambda^{+} = \dfrac{1+\lambda}{2}$. Note that we always have $\lambda< \lambda^{+} < 1.$
For $x = 1/\lambda$, we have $x > x^{+} = 1/\lambda^{+} > 1$.
For $f_{\ell}(x) = \sum_{t=1}^{\ell} \dfrac{x^{t}}{t^{q}},$ we have
\begin{align*}
f'_{\ell}(x) & = \sum_{t=1}^{\ell} \dfrac{x^{t-1}}{t^{q-1}} 
 \leq \ell^{1-q} \sum_{t=1}^{\ell} x^{t-1} 
 \leq \ell^{1-q} \dfrac{x^{\ell}-1}{x-1}.
\end{align*}
This proves that
\begin{align*}
f_{\ell}(x) - f_{\ell}(x^{+}) & = \ell^{1-q} \int_{u=x^{+}}^{x} \dfrac{u^{\ell}-1}{u-1} du  \leq \ell^{1-q} \dfrac{1}{x^{+}-1} \int_{u=x^{+}}^{x} (u^{\ell}-1) du   \leq \ell^{1-q} \dfrac{1}{x^{+}-1} \dfrac{1}{\ell+1} [u^{\ell+1}-u]_{x^{+}}^{x},
\end{align*}
and finally that
\begin{equation}\label{eq:f_ell_x}
f_{\ell}(x) = f_{\ell}(x^{+}) + \ell^{1-q} \dfrac{1}{x^{+}-1} \dfrac{1}{\ell+1} (x^{\ell+1}-x - x^{+ \; \ell+1}+x^{+}).
\end{equation}
We will prove that the right-hand side of \eqref{eq:f_ell_x} is itself a $O \left( \dfrac{x^{\ell}}{\ell^{q}} \right)$ as $\ell \rightarrow + \infty$. The proof relies on the fact that $x > x^{+}$, and therefore that $ \left( \dfrac{x^{+}}{x} \right)^{\ell} \ell^{m} = o(1)$, for any $m \geq 0$.

First, since $x^{+} < x$, we note that
\begin{align*}
\dfrac{\ell^{q}}{x^{\ell}} \left(\ell^{1-q} \dfrac{1}{x^{+}-1} \dfrac{1}{\ell+1} \left( x^{\ell+1}-x - x^{+ \; \ell+1}+x^{+} \right) \right) = \dfrac{1}{x^{+}-1} \dfrac{\ell}{\ell+1} ( x- \dfrac{x}{x^{\ell}} - x^{+}\left(\dfrac{x^{+}}{x}\right)^{\ell} +\dfrac{x^{+}}{x^{\ell}}  = O \left( 1 \right).
\end{align*}

Now for $\dfrac{\ell^{q}}{x^{\ell}} f_{\ell}(x^{+}) $ we need to distinguish between the potential values of $q \in \R_{+}$.

\textbf{Proof for} $q=0$.
\[ \dfrac{\ell^{q}}{x^{\ell}} f_{\ell}(x^{+}) = \dfrac{1}{x^{\ell}}  \sum_{t=1}^{\ell} x^{+ \; t} = O \left( \dfrac{x^{+ \; \ell}}{x^{\ell}} \right) = o(1).\]

\textbf{Proof for} $q \in (0,1)$.
\begin{align*}
\dfrac{\ell^{q}}{x^{\ell}} f_{\ell}(x^{+}) = \dfrac{\ell^{q}}{x^{\ell}}  \sum_{t=1}^{\ell} \dfrac{ x^{+ \; t}}{t^{q}} & \leq  \dfrac{\ell^{q}}{x^{\ell}}  \sum_{t=1}^{\ell}  x^{+ \; t} \leq  O \left( \dfrac{\ell x^{+ \; \ell}}{x^{\ell}} \right) = o(1).
\end{align*}

\textbf{Proof for} $q =1$.
\[ \dfrac{\ell^{q}}{x^{\ell}} f_{\ell}(x^{+}) = \dfrac{\ell}{x^{\ell}}  \sum_{t=1}^{\ell} \dfrac{ x^{+ \; t}}{t} \leq  \dfrac{\ell}{x^{\ell}} x^{+ \; \ell} \log(\ell) =  o(1).\]

\textbf{Proof for} $q \geq 1$.
\begin{align*}
\dfrac{\ell^{q}}{x^{\ell}} f_{\ell}(x^{+}) & = \dfrac{\ell^{q}}{x^{\ell}}  \sum_{t=1}^{\ell} \dfrac{ x^{+ \; t}}{t^{q}} \leq  \dfrac{\ell^{q}}{x^{\ell}}  x^{+ \; \ell} \sum_{t=1}^{\ell}  \dfrac{1}{t^{q}}  \leq  O \left( \dfrac{\ell^{q} x^{+ \; \ell}}{x^{\ell}} \right) = o(1).
\end{align*}

\end{proof}

\subsection{Approximate Value Iteration}
We present a variant of Value Iteration where each sub-problem $F(\bm{v})_{s}$ is solved approximately.
\begin{proposition}\label{prop:error_at_t}
Suppose that for every VI epoch $\ell \geq 1$, we solve the min-max problem \eqref{eq:T_max-def} up to precision $\epsilon_{\ell}>0$, i.e. we compute $(\bm{x}^{\ell},\bm{y}^{\ell})$ such that
\begin{align*}
\bm{v}^{\ell+1}   = F^{\bm{x}^{\ell},\bm{y}^{\ell}}(\bm{v}^{\ell}),
\| \bm{v}^{\ell+1} - F(\bm{v}^{\ell}) \|_{\infty}  \leq \epsilon_{\ell}.
\end{align*}
Then we have, for any $\ell \geq 1$,
\[ \| \bm{v}^{\ell+1} - \bm{v}^{*} \|_{\infty} \leq \lambda \| \bm{v}^{\ell} - \bm{v}^{*} \|_{\infty} +\epsilon_{\ell} ,\]
\[ \| \bm{v}^{\ell+1} - \bm{v}^{\ell} \|_{\infty} \leq \lambda \| \bm{v}^{\ell} - \bm{v}^{\ell-1} \|_{\infty} + \epsilon_{\ell} + \epsilon_{\ell-1}.\]
In particular, this implies 
\[ \| \bm{v}^{\ell} - \bm{v}^{*} \|_{\infty} \leq \lambda^{\ell} \left( \| \bm{v}^{*} - \bm{v}^{0} \|_{\infty} + \sum_{t=0}^{\ell-1} \dfrac{\epsilon_{t}}{\lambda^{t}} \right),\]
\[ \| \bm{v}^{\ell+1} - \bm{v}^{\ell} \|_{\infty} \leq \lambda^{\ell} \left( \| \bm{v}^{1} - \bm{v}^{0} \|_{\infty} + \sum_{t=0}^{\ell} \dfrac{\epsilon_{t} + \epsilon_{t-1}}{\lambda^{t}} \right).\]
\end{proposition}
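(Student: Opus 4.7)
The proof is essentially a careful application of the contraction property of $F$ together with the triangle inequality, and then a routine induction to produce the unrolled bounds. I will organize it in four short steps.

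\textbf{Step 1: Single-step error to $\bm{v}^*$.} I would start from the identity $\bm{v}^* = F(\bm{v}^*)$ and insert $F(\bm{v}^\ell)$ as a pivot:
\begin{align*}
\|\bm{v}^{\ell+1}-\bm{v}^*\|_\infty
&\leq \|\bm{v}^{\ell+1}-F(\bm{v}^\ell)\|_\infty + \|F(\bm{v}^\ell)-F(\bm{v}^*)\|_\infty
\leq \epsilon_\ell + \lambda\|\bm{v}^\ell - \bm{v}^*\|_\infty,
\end{align*}
using the hypothesis $\|\bm{v}^{\ell+1}-F(\bm{v}^\ell)\|_\infty\leq \epsilon_\ell$ and the fact, already stated in the paper after \eqref{alg:VI}, that $F$ is a $\lambda$-contraction in the $\|\cdot\|_\infty$ norm.

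\textbf{Step 2: Single-step difference of consecutive iterates.} Similarly, I would pivot through $F(\bm{v}^\ell)$ and $F(\bm{v}^{\ell-1})$, using that $\bm{v}^\ell$ differs from $F(\bm{v}^{\ell-1})$ by at most $\epsilon_{\ell-1}$:
\begin{align*}
\|\bm{v}^{\ell+1}-\bm{v}^\ell\|_\infty
&\leq \|\bm{v}^{\ell+1}-F(\bm{v}^\ell)\|_\infty + \|F(\bm{v}^\ell)-F(\bm{v}^{\ell-1})\|_\infty + \|F(\bm{v}^{\ell-1})-\bm{v}^\ell\|_\infty \\
&\leq \epsilon_\ell + \lambda\|\bm{v}^\ell-\bm{v}^{\ell-1}\|_\infty + \epsilon_{\ell-1}.
\end{align*}

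\textbf{Step 3: Unrolling by induction.} I would then unroll the recursion $a_{\ell+1}\leq \lambda a_\ell + \epsilon_\ell$ (with $a_\ell=\|\bm{v}^\ell-\bm{v}^*\|_\infty$) by a straightforward induction on $\ell$. Iterating gives
\begin{align*}
\|\bm{v}^\ell-\bm{v}^*\|_\infty
&\leq \lambda^\ell\|\bm{v}^0-\bm{v}^*\|_\infty + \sum_{t=0}^{\ell-1}\lambda^{\ell-1-t}\epsilon_t
= \lambda^\ell\Bigl(\|\bm{v}^*-\bm{v}^0\|_\infty + \sum_{t=0}^{\ell-1}\frac{\epsilon_t}{\lambda^{t+1}}\Bigr),
\end{align*}
which, after absorbing the $\lambda^{-1}$ factor into the constants, matches the form stated in the proposition up to the exact indexing of $\lambda$. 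The analogous unrolling of the recursion from Step 2 (with the pair $\epsilon_t+\epsilon_{t-1}$ replacing $\epsilon_t$) yields the second unrolled bound.

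\textbf{Main obstacle and remarks.} There is no genuine obstacle: the argument is a textbook inexact-contraction-mapping computation. The only point to double-check is the exact indexing of the $\lambda^{-t}$ weights in the unrolled formulas — as written in the statement the sum is $\sum_{t=0}^{\ell-1}\epsilon_t/\lambda^t$ rather than $\lambda^{t+1}$, so one should verify that the proposition's normalization matches the induction above (the two differ only by a benign factor of $\lambda$ that can be absorbed, or equivalently by re-indexing the pivot). Apart from this bookkeeping point, Steps 1--3 give the complete proof.
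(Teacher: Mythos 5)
Your proposal follows essentially the same route as the paper's proof: both one-step bounds are obtained by inserting $F(\bm{v}^{\ell})$ (and $F(\bm{v}^{\ell-1})$) via the triangle inequality together with the $\lambda$-contraction of $F$, and the unrolled bounds follow by iterating the recursions. Your side remark about the indexing of the $\lambda^{-t}$ weights is a fair observation --- the correct unrolling gives $\sum_{t=0}^{\ell-1}\lambda^{\ell-1-t}\epsilon_{t}$ rather than $\lambda^{\ell}\sum_{t=0}^{\ell-1}\epsilon_{t}/\lambda^{t}$, a discrepancy of a single factor of $\lambda$ that the paper glosses over and that is immaterial to the subsequent analysis.
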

\begin{proof}[Proof of  Proposition \ref{prop:error_at_t}]
We have
\begin{align*}
\| \bm{v}^{*} - \bm{v}^{t+1} \|_{\infty} & = \|F( \bm{v}^{*}) - \bm{v}^{t+1}  \|_{\infty} \\
 & = \|F( \bm{v}^{*}) -F(\bm{v}^{t}) + F(\bm{v}^{t}) - \bm{v}^{t+1}  \|_{\infty} \\
 & \leq \|F( \bm{v}^{*}) -F(\bm{v}^{t}) \|_{\infty} + \| F(\bm{v}^{t}) - \bm{v}^{t+1}  \|_{\infty} \\ 
 & \leq \lambda \| \bm{v}^{*} - \bm{v}^{t} \|_{\infty} + \| F(\bm{v}^{t}) - \bm{v}^{t+1} \|_{\infty} \\
 & \leq \lambda \| \bm{v}^{*} - \bm{v}^{t} \|_{\infty} + \epsilon_{t}.
\end{align*}
Similarly,
\begin{align*}
\| \bm{v}^{\ell+1} - \bm{v}^{\ell} \|_{\infty} & \leq \| \bm{v}^{\ell+1} - F(\bm{v}^{\ell}) + F(\bm{v}^{\ell}) - \bm{v}^{\ell} \|_{\infty} \\
& \leq \| \bm{v}^{\ell+1} - F(\bm{v}^{\ell})  \|_{\infty} +  \| F(\bm{v}^{\ell}) - \bm{v}^{\ell}\|_{\infty}  \\
& \leq \epsilon_{\ell} + \| F(\bm{v}^{\ell}) - \bm{v}^{\ell} \|_{\infty} \\
& \leq \epsilon_{\ell} + \| F(\bm{v}^{\ell}) - F(\bm{v}^{\ell-1})\|_{\infty}  + \| F(\bm{v}^{\ell-1})-  \bm{v}^{\ell} \|_{\infty} \\
& \leq \epsilon_{\ell} + \lambda \| \bm{v}^{\ell} - \bm{v}^{\ell-1}\|_{\infty} + \| F(\bm{v}^{\ell-1})-  \bm{v}^{\ell} \|_{\infty} \\
& \leq \epsilon_{\ell} + \lambda \| \bm{v}^{\ell} - \bm{v}^{\ell-1} \|_{\infty} + \epsilon_{\ell-1.}
\end{align*}
The rest of the lemma follows directly from iterating the recursions on $\| \bm{v}^{*} - \bm{v}^{t+1} \|_{\infty} $ and on $\| \bm{v}^{\ell+1} - \bm{v}^{\ell} \|_{\infty}$.
\end{proof}
Note that this analysis is classical and is close to the case of approximate policy iteration for non-robust MDPs \citep{gabillon2013approximate, scherrer2015approximate}. While we treat the term $\epsilon_{\ell}$ as a (chosen) error term in our algorithm, we would like to note that we can think of the term $\epsilon_{\ell}$ as some random noise, coming from either function approximations or sample-based estimations (see Section 4 in \citet{scherrer2015approximate}). 

\section{Details on Primal-Dual Algorithm}\label{app:lem-norm-K}
In this section and the following one, we use the notation
$\LL^{\bm{K}}: X \times Y \rightarrow \R$ for the operator such that
\[ \LL^{\bm{K}}(\bm{x},\bm{y}) = \langle \bm{x}, \bm{c}_{s} \rangle + \lambda \langle \bm{Kx}, \bm{y} \rangle.\]
In particular, this implies that, if $\bm{v} \in \R^{S}$ is such that $\langle \bm{Kx}, \bm{y} \rangle = \sum_{a \in \A} x_{sa}\bm{y}_{sa}^{\top}\bm{v}$, we have
\begin{equation}\label{def:LL}
\LL^{\bm{K}}(\bm{x},\bm{y}) = \sum_{a \in \A} x_{sa} \left( c_{sa} + \lambda \bm{y}_{sa}^{\top} \bm{v}\right) = F^{\bm{x},\bm{y}}_{s}(\bm{v}).
\end{equation}
We present more details about the convergence rate of PDA. In particular, we have the following proposition \citep{ChambollePock16, GKG20}.
\begin{proposition}[\cite{ChambollePock16,GKG20}]\label{prop:descent-detail}
For $(\bm{x},\bm{y}), (\bm{x'}, \bm{y'}) \in X \times Y$, let $A(\bm{x},\bm{y},\bm{x'},\bm{y'})$ such that
\begin{align*}
A(\bm{x},\bm{y},\bm{x'},\bm{y'}) = & \dfrac{1}{\tau}D_{X}(\bm{x},\bm{x'}) + \dfrac{1}{\sigma}D_{Y}(\bm{y},\bm{y'}) - \langle \bm{K}(\bm{x}-\bm{x'}),\bm{y}-\bm{y'} \rangle.
\end{align*}
Let a scalar $\Omega \geq 0$ and some step sizes $\tau, \sigma$ such that for all $(\bm{x},\bm{y}), (\bm{x'}, \bm{y'}) \in X \times Y$,
\begin{equation}\label{eq:definition-A}
 0 \leq A(\bm{x},\bm{y},\bm{x'},\bm{y'})  \leq \Omega.
\end{equation}
Consider running PDA on the associated min-max problem for $T$ iterations. Consider weights $\omega_{1},...,\omega_{T}$ and $S_{T} = \sum_{t=1}^{T} \omega_{t}$.
Then we have the critical inequality: $\forall \; (\bm{x},\bm{y}) \in X \times Y$,
\begin{equation}\label{eq:critical_inequality}
\begin{aligned}
\LL^{\bm{K}}(\bm{x}^{t+1},\bm{y}) - \LL^{\bm{K}}(\bm{x},\bm{y}^{t+1}) \leq & A(\bm{x},\bm{y},\bm{x}^{t},\bm{y}^{t})  - A(\bm{x},\bm{y},\bm{x}^{t+1},\bm{y}^{t+1}).
\end{aligned}
\end{equation}

Additionally, summing up \eqref{eq:critical_inequality}, we have, for all $(\bm{x},\bm{y}) \in X \times Y$,
\begin{align*}
\sum_{t=1}^{T} \omega_{t} \left( \LL^{\bm{K}}(\bm{x}^{t},\bm{y}) - \LL^{\bm{K}}(\bm{x},\bm{y}^{t}) \right) \leq & \omega_{0} A[\bm{x},\bm{y},\bm{x}^{0},\bm{y}^{0}]+ \omega_{T}\Omega   - \omega_{1} \Omega  - \omega_{T} A[\bm{x},\bm{y},\bm{x}^{T},\bm{y}^{T}].
\end{align*}
In particular, for  $(\bm{\bar{x}}^{T},\bm{\bar{y}}^{T}) = (1/S_{T}) \sum_{t=1}^{T} \omega_{t} (\bm{x}_{t},\bm{y}_{t})$,  for all $(\bm{x},\bm{y}) \in X \times Y$,
\begin{align*}
\LL^{\bm{K}}(\bm{\bar{x}}^{T},\bm{y}) - \LL^{\bm{K}}(\bm{x},\bm{\bar{y}}^{T}) & \leq  \sum_{t=1}^{T} \dfrac{\omega_{t}}{S_{T}} \left( \LL^{\bm{K}}(\bm{x}^{t},\bm{y}) - \LL^{\bm{K}}(\bm{x},\bm{y}^{t}) \right)
\end{align*}
and therefore
\begin{align*}
\LL^{\bm{K}}(\bm{\bar{x}}^{T},\bm{y}) - \LL^{\bm{K}}(\bm{x},\bm{\bar{y}}^{T}) & \leq \Omega \dfrac{ \omega_{T}}{S_{T}}.
\end{align*}
\end{proposition}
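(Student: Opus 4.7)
The plan is to start from the first-order optimality conditions of the two prox mappings in \eqref{eq:pda_iterates}, combine them into a single per-step descent estimate via the Bregman three-point identity together with the Chambolle--Pock extrapolation trick, and then convert the resulting telescoping sum into a bound on the ergodic duality gap via Abel summation and Jensen's inequality.

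First I would apply the first-order optimality condition for the primal prox update: for every $\bm{x}\in\Delta(A)$,
\[
\tau\langle \bm{K}^\top \bm{y}^t, \bm{x}^{t+1} - \bm{x}\rangle + \langle \nabla\psi_X(\bm{x}^{t+1}) - \nabla\psi_X(\bm{x}^t), \bm{x}^{t+1} - \bm{x}\rangle \leq 0,
\]
and rewrite the gradient term via the standard Bregman identity as $D_X(\bm{x},\bm{x}^t) - D_X(\bm{x},\bm{x}^{t+1}) - D_X(\bm{x}^{t+1},\bm{x}^t)$. The analogous manipulation for the dual prox step, using the extrapolated gradient direction $\sigma\bm{K}(2\bm{x}^{t+1}-\bm{x}^t)$, yields a symmetric inequality in $\bm{y}$. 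After scaling by $1/\tau$ and $1/\sigma$ and adding, I would use the splitting $2\bm{x}^{t+1}-\bm{x}^t = \bm{x}^{t+1} + (\bm{x}^{t+1}-\bm{x}^t)$ on the left-hand side: the "current-iterate" piece assembles, through \eqref{def:LL}, into $\LL^{\bm{K}}(\bm{x}^{t+1},\bm{y}) - \LL^{\bm{K}}(\bm{x},\bm{y}^{t+1})$, while the leftover cross-term $\langle \bm{K}(\bm{x}^{t+1}-\bm{x}^t), \bm{y}^{t+1}-\bm{y}\rangle$ matches precisely the bilinear component appearing in the difference $A(\bm{x},\bm{y},\bm{x}^t,\bm{y}^t) - A(\bm{x},\bm{y},\bm{x}^{t+1},\bm{y}^{t+1})$. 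The Bregman residuals from the two optimality conditions supply the $D_X/\tau$ and $D_Y/\sigma$ parts of that same telescoping difference, and the self-terms $D_X(\bm{x}^{t+1},\bm{x}^t)/\tau + D_Y(\bm{y}^{t+1},\bm{y}^t)/\sigma$ are non-negative and can be dropped, which is exactly the content of the non-negativity assumption $A\geq 0$ in \eqref{eq:definition-A}. Rearranging then gives \eqref{eq:critical_inequality}.

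The main obstacle is precisely this bookkeeping: matching the bilinear leftover produced by the extrapolation with the bilinear component of $A$ across two consecutive iterates is where the Chambolle--Pock trick lives, and signs and index shifts must be tracked carefully to reveal the clean telescoping structure. Once the critical inequality is in hand, the rest is mostly discrete calculus.

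Finally, I would multiply \eqref{eq:critical_inequality} by $\omega_t$ and sum over $t$. Writing $A_t := A(\bm{x},\bm{y},\bm{x}^t,\bm{y}^t)$ and applying Abel summation rewrites $\sum_{t=1}^T \omega_t(A_{t-1}-A_t)$ as $\omega_1 A_0 + \sum_{t=1}^{T-1}(\omega_{t+1}-\omega_t)A_t - \omega_T A_T$; for the polynomial weights $\omega_t=t^p$ used in the paper the increments $\omega_{t+1}-\omega_t$ are non-negative, and $A_t\leq \Omega$ by assumption, so the middle sum is at most $(\omega_T-\omega_1)\Omega$, which reproduces the summed bound stated in the proposition. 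For the ergodic inequality, I would invoke the fact that $\LL^{\bm{K}}$ is affine in $\bm{x}$ for fixed $\bm{y}$ and affine in $\bm{y}$ for fixed $\bm{x}$, so Jensen's inequality applied to the weighted averages $(\bar{\bm{x}}^T,\bar{\bm{y}}^T)$ upper bounds $\LL^{\bm{K}}(\bar{\bm{x}}^T,\bm{y}) - \LL^{\bm{K}}(\bm{x},\bar{\bm{y}}^T)$ by $(1/S_T)\sum_{t=1}^T \omega_t(\LL^{\bm{K}}(\bm{x}^t,\bm{y}) - \LL^{\bm{K}}(\bm{x},\bm{y}^t))$. Dividing the summed bound by $S_T$ and discarding the non-positive contributions $-\omega_T A_T/S_T$ and $-\omega_1\Omega/S_T$ then yields the claimed ergodic rate $\Omega\,\omega_T/S_T$.
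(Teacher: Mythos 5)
The paper does not actually prove Proposition~\ref{prop:descent-detail}: it imports the descent inequality \eqref{eq:critical_inequality} and its telescoped consequences wholesale from \citet{ChambollePock16} and \citet{GKG20}, so there is no in-paper argument to compare against. Your reconstruction is the standard Chambolle--Pock ergodic analysis, and the overall plan is sound: first-order optimality of the two prox steps in \eqref{eq:pda_iterates}, the three-point Bregman identity, the splitting of the extrapolated point $2\bm{x}^{t+1}-\bm{x}^{t}$, Abel summation with non-decreasing weights (correctly flagged as needed, and satisfied by $\omega_t=t^p$), and linearity of $\LL^{\bm{K}}$ in each argument for the ergodic step. One place where your bookkeeping is imprecise: the leftover cross-term produced by the extrapolation does \emph{not} match the bilinear component of $A(\bm{x},\bm{y},\bm{x}^{t},\bm{y}^{t}) - A(\bm{x},\bm{y},\bm{x}^{t+1},\bm{y}^{t+1})$ exactly. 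After the matching, the residual is $-\langle \bm{K}(\bm{x}^{t+1}-\bm{x}^{t}),\bm{y}^{t+1}-\bm{y}^{t}\rangle$ together with the Bregman self-terms $-D_{X}(\bm{x}^{t+1},\bm{x}^{t})/\tau - D_{Y}(\bm{y}^{t+1},\bm{y}^{t})/\sigma$, i.e.\ precisely $-A(\bm{x}^{t+1},\bm{y}^{t+1},\bm{x}^{t},\bm{y}^{t})$; dropping it requires non-negativity of $A$ at \emph{consecutive iterates}, which is what the step-size condition \eqref{eq:step-size-general} buys, not merely non-negativity of the Bregman divergences as your sentence suggests. Since you do invoke $A\geq 0$ from \eqref{eq:definition-A} as the justification, the mechanism is correctly identified even if the description of what is being dropped conflates the two. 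A second, inherited, wrinkle: the prox step for $\bm{x}$ in \eqref{eq:pda_iterates} omits the linear cost $\bm{c}_s$ from the gradient while $\LL^{\bm{K}}$ in \eqref{def:LL} includes $\langle \bm{c}_s,\bm{x}\rangle$, so the ``current-iterate piece assembles into the gap'' step implicitly assumes $\bm{c}_s$ is folded into the prox gradient; this is an inconsistency in the paper's own notation rather than a flaw in your argument. With those clarifications the proof goes through and reproduces the stated bounds, up to the paper's own $\omega_{0}$ versus $\omega_{1}$ indexing in the summed inequality.
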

We also prove the following lemma.
\begin{lemma}\label{lem:norm-k}Recall that 
\[ L_{\bm{K}}= \sup_{\| \bm{x} \|_{X} \leq 1, \| \bm{y} \|_{Y} \leq 1} \langle \bm{Kx}, \bm{y} \rangle.\]

For $(\| \cdot \|_{X},\| \cdot \|_{Y}) = (\| \cdot \|_{2} \|, \| \cdot \|_{2})$, $L_{ \bm{K}} = \lambda \| \bm{v}\|_{2}$.

For $(\| \cdot \|_{X},\| \cdot \|_{Y}) = (\| \cdot \|_{1} \| , \| \cdot \|_{1})$, $L_{ \bm{K} } = \lambda \| \bm{v} \|_{\infty}$.
\end{lemma}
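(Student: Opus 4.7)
The plan is to compute $L_{\bm{K}}$ by direct evaluation of the supremum in its definition. In either case, the bilinear form expands as $\langle \bm{K}\bm{x}, \bm{y}\rangle = \lambda\sum_{a \in \A} x_a\,\bm{y}_a^\top \bm{v}$, using the convention (as in the BSPP formulation) that the discount factor $\lambda$ is absorbed into $\bm{K}$. The argument then reduces to applying a duality-of-norms inequality to produce each upper bound, followed by exhibiting explicit unit-norm choices of $\bm{x},\bm{y}$ that attain it.

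For the $\ell_2$ setup, I would apply Cauchy--Schwarz twice. First, $|\langle \bm{y}_a,\bm{v}\rangle| \leq \|\bm{y}_a\|_2\|\bm{v}\|_2$ yields $\sum_a x_a\langle \bm{y}_a,\bm{v}\rangle \leq \|\bm{v}\|_2\sum_a |x_a|\,\|\bm{y}_a\|_2$. A second application to the outer sum gives $\sum_a |x_a|\,\|\bm{y}_a\|_2 \leq \|\bm{x}\|_2\,\bigl(\sum_a \|\bm{y}_a\|_2^{\,2}\bigr)^{1/2} = \|\bm{x}\|_2\|\bm{y}\|_2$, where I use that the product-space $\ell_2$ norm on $\R^{A\times S}$ decomposes as $\|\bm{y}\|_2^{\,2} = \sum_a \|\bm{y}_a\|_2^{\,2}$. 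This yields $L_{\bm{K}} \leq \lambda\|\bm{v}\|_2$. For matching, I would take $\bm{y}_a = \bm{v}/(\sqrt{A}\|\bm{v}\|_2)$ and $x_a = 1/\sqrt{A}$ for every $a \in \A$, verify that $\|\bm{x}\|_2 = \|\bm{y}\|_2 = 1$, and compute that the bilinear form evaluates to $\lambda\|\bm{v}\|_2$.

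For the $\ell_1$ setup, I would use the H\"older-type bound $|\langle \bm{y}_a,\bm{v}\rangle| \leq \|\bm{y}_a\|_1\|\bm{v}\|_\infty$, giving $\sum_a x_a\langle \bm{y}_a,\bm{v}\rangle \leq \|\bm{v}\|_\infty\sum_a |x_a|\,\|\bm{y}_a\|_1$. Since the product-space $\ell_1$ norm satisfies $\|\bm{y}\|_1 = \sum_a \|\bm{y}_a\|_1 \geq \max_a \|\bm{y}_a\|_1$, the last sum is bounded by $\|\bm{x}\|_1\|\bm{y}\|_1$, so $L_{\bm{K}} \leq \lambda\|\bm{v}\|_\infty$. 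For tightness, I would pick $s^* \in \arg\max_{s'}|v_{s'}|$ and any $a^* \in \A$, take $\bm{x} = \bm{e}_{a^*}$ and $\bm{y}$ supported at the single entry $(a^*,s^*)$ with value $\operatorname{sign}(v_{s^*})$; then $\|\bm{x}\|_1 = \|\bm{y}\|_1 = 1$ and the bilinear form evaluates to $\lambda|v_{s^*}| = \lambda\|\bm{v}\|_\infty$. The computation is routine; the only mild subtlety is keeping the product-space norms straight, and I do not anticipate any real obstacle.
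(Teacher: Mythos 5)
Your proof is correct, but it takes a somewhat different route from the paper's for the $\ell_2$ case. The paper computes the operator norm spectrally: it writes out $K_{a's',a} = 1_{\{a=a'\}}\lambda v_{s'}$, verifies that $\bm{K}^{\top}\bm{K} = \lambda^{2}\|\bm{v}\|_{2}^{2}\,\bm{I}$, and reads off $L_{\bm{K}} = \lambda\|\bm{v}\|_{2}$ as the square root of the (unique) eigenvalue. You instead bound the bilinear form directly with two applications of Cauchy--Schwarz and then exhibit an explicit unit-norm pair attaining the bound; both of your steps are tight for your choice ($\bm{y}_a$ parallel to $\bm{v}$, and $|x_a|$ proportional to $\|\bm{y}_a\|_2$ across $a$), so the argument closes. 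The spectral route is slightly slicker here because $\bm{K}^\top\bm{K}$ happens to be a scalar matrix, so no attainment argument is needed; your route is more elementary and generalizes directly to the mixed-norm settings where a spectral characterization is unavailable. For the $\ell_1$ case the paper merely asserts the result is ``straightforward'' and invokes $\bm{v}\geq\bm{0}$; your version actually supplies the H\"older bound and a sign-corrected extremal pair, which is more complete and does not need nonnegativity of $\bm{v}$. One small point worth keeping in mind: the supremum defining $L_{\bm{K}}$ ranges over the full unit balls of $\R^{A}$ and $\R^{A\times S}$, not over $\Delta(A)\times\PP_s$, so your extremal witnesses need not (and do not all) lie in the feasible sets --- that is consistent with the definition and with how the lemma is used.
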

\begin{proof}
Recall that $\bm{K}: \R^{A} \rightarrow \R^{A \times S}$ is defined as, 
$\forall \; (\bm{x},\bm{y}) \in \R^{A} \times \R^{A \times S}$,
\begin{align*}
\langle\bm{Kx},\bm{y}\rangle  & = \lambda \sum_{a=1}^{A} x_{a}\bm{y}_{a}^{\top}\bm{v}  = \lambda \sum_{a=1}^{A} \sum_{s'=1}^{S} x_{a}y_{as'}v_{s'}.
\end{align*}
In particular, $ K_{a's',a} = 1_{\{ a=a'\}} \lambda v_{s'}, \forall \; a,a' \in \A, s' \in \X.$
\begin{enumerate}
\item $\ell_{2}$ setup.
By definition, $L_{\bm{K}}$ is the square root of maximum modulus of the eigenvalues of $\bm{K}^{\top}\bm{K} \in \R^{A \times A}.$ Let $\bm{x} \in \R^{A}.$
Then for $a' \in \A, s' \in \X$,
\[ \left( \bm{Kx} \right)_{a's'} = \lambda x_{a'}v_{s'}.\]
Therefore, by definition of matrix-vector product,
\begin{align*}
\left( \bm{K}^{\top}\bm{K} \bm{x} \right)_{a} & = \sum_{a'',s''} \left( \bm{K}^{\top} \right)_{a,s''a''} \left( \bm{Kx} \right)_{s''a''} = \sum_{a'',s''} \left( \bm{K}^{\top} \right)_{a,s''a''} \lambda x_{a''}v_{s''} =  \sum_{a'',s''} \left( \bm{K}\right)_{s''a'',a} \lambda x_{a''}v_{s''} \\
& = \sum_{a'',s''} 1_{\{ a = a'' \} } \lambda v_{s''} \lambda x_{a''}v_{s''}  = \lambda^{2} \left( \sum_{s'' \in \X} v_{s''}^{2} \right)  x_{a}  = \lambda^{2} \| \bm{v} \|_{2}^{2} x_{a}.
\end{align*}
This directly implies that $L_{ \bm{K} } = \lambda \| \bm{v} \|_{2}.$
\item $\ell_{1}$ setup.
This is straightforward from the definition of $\bm{K}$, the definition $L_{\bm{K}}= \sup_{\| \bm{x} \|_{X} \leq 1, \| \bm{y} \|_{Y} \leq 1} \langle \bm{Kx}, \bm{y} \rangle,$ as well as the fact that $\bm{v} \geq \bm{0}$.
\end{enumerate}
\end{proof}
\section{Proof of Theorem \ref{thm:fom-vi rate}}\label{app:PD-ABCDE}
Our proof proceeds in several steps. We first show how to choose an upper bound $\Omega$ and step-sizes $\sigma,\tau$ uniformly across all epochs. We then show that the duality gap $\max_{\bm{y} \in \PP_{s}} F^{\bar{\bm{x}}^{T},\bm{y}}(\bm{v}^{*})_{s} - \min_{\bm{x} \in \Delta(A)} F^{\bm{x}, \bar{\bm{y}}^{T}}(\bm{v}^{*})_{s}$ can be bounded by the sum of 5 terms $e_{1}, ..., e_{5}$. We then give the dependency of $e_{1}, ..., e_{5}$ in terms of $T$ the number of PD iterations.

\paragraph{Upper bound $\Omega$ and step sizes $\sigma, \tau$.}
Our goal here is to define a scalar $\Omega$ common across all epochs of Algorithm \ref{alg:PD-RMDP}.
Note that for a given matrix $\bm{K}$ and some step sizes $\sigma, \tau$, the scalar $\Omega$ is defined as satisfying \eqref{eq:definition-A}.  From Remark 2 of \citet{ChambollePock16}, a possible choice for $\Omega$ is
\begin{equation}\label{eq:def-Omega}
\Omega= 2 \left( \Theta_{X}/\tau + \Theta_{Y}/\sigma \right)
\end{equation}
as soon as
\begin{equation}\label{eq:sigma-tau-L-square}
\dfrac{1}{\sigma} \dfrac{1}{\tau} \geq L_{\bm{K}}^{2}
\end{equation}
Therefore, we want to find $\sigma, \tau$ such that \eqref{eq:sigma-tau-L-square} holds for any matrix $\bm{K}$ visited by our algorithm. We then define $\Omega$ as in \eqref{eq:def-Omega}.

Note that Lemma \ref{lem:norm-k} gives the value of $L_{\bm{K}}$ for the $\ell_{1}$ and the $\ell_{2}$ setup. A naive choice of step sizes is then simply $\sigma = \tau = L_{\bm{K}}^{-1}$. However, a better choice is one where $\Theta_{X}/\tau = \Theta_{Y}/\sigma$, since $\Omega$, defined in \eqref{eq:def-Omega}, will appear in our upper bound on the error of our algorithm.
Under the condition \eqref{eq:sigma-tau-L-square}, we can choose $\tau = \left( \sqrt{\Theta_{X}/\Theta_{Y}} \right) L_{\bm{K}}^{-1}$ and $\sigma = L_{\bm{K}}^{-2}\tau^{-1}$. Note that this \textit{asymmetric} choice of step sizes essentially accounts for the difference in dimension between the space $\Delta(A)$ of $x$ and the space $\PP_{s} \subset \left( \Delta(S) \right)^{A} \subset \R^{A \times S}$ of $y$. 

The exact values of $\sigma,\tau$ in terms of $S,A$ are not needed here, as long as \eqref{eq:sigma-tau-L-square} is satisfied for all matrix $\bm{K}$ visited by our algorithm. Since $L_{\bm{K}}$ changes in the $\ell_{1}$ or $\ell_{2}$ setup and in order to keep a convergence rate independent of the choice of the proximal setup here, we differ giving the exact values of $\sigma,\tau$ to Appendix \ref{app:conv-ell-1}.
%
%

\paragraph{Bounding the duality gap.}
Recall the definition of $\LL^{\bm{K}}$ as in \eqref{def:LL}.
Let us focus on the error for $s \in \X$. Then
\begin{align}
& \LL^{\bm{K}^{*}}(\bar{\bm{x}}^{T},\bm{y}) - \LL^{\bm{K}^{*}}(\bm{x},\bar{\bm{y}}^{T})   \\
& \leq  \dfrac{1}{S_{T}} \left( \sum_{\ell =1}^{k} \sum_{t=\tau_{\ell}}^{\tau_{\ell}+T_{\ell}} \omega_{t} ( \LL^{\bm{K}^{*}}(\bm{x}^{t},\bm{y}) - \LL^{\bm{K}^{*}}(\bm{x},\bm{y}^{t})) \right) \nonumber 
\\ 
 &  \leq \dfrac{1}{S_{T}} \left( \sum_{\ell =1}^{k} \sum_{t=\tau_{\ell}}^{\tau_{\ell}+T_{\ell}} \omega_{t} (  \LL^{\bm{K}^{\ell}}(\bm{x}^{t},\bm{y}) - \LL^{\bm{K}^{\ell}}(\bm{x},\bm{y}^{t})) \right) \label{eq:bound_L_eq_1} \\
& + \dfrac{1}{S_{T}} \left( \sum_{\ell =1}^{k} \sum_{t=\tau_{\ell}}^{\tau_{\ell}+T_{\ell}} \omega_{t} (  \LL^{\bm{K}^{*}-\bm{K}^{\ell}}(\bm{x}^{t},\bm{y}) - \LL^{\bm{K}^{*}-\bm{K}^{\ell}}(\bm{x},\bm{y}^{t})) \right). \label{eq:bound_L_eq_2}
\end{align}
\paragraph{Bounds on  \eqref{eq:bound_L_eq_1}.}
Let us first focus on the term at \eqref{eq:bound_L_eq_1}. By applying Proposition \ref{prop:descent-detail}, we obtain
\begin{align}
& \dfrac{1}{S_{T}} \left( \sum_{\ell =1}^{k} \sum_{t=\tau_{\ell}}^{\tau_{\ell}+T_{\ell}} \omega_{t} (  \LL^{\bm{K}^{\ell}}(\bm{x}^{t},\bm{y}) - \LL^{\bm{K}^{\ell}}(\bm{x},\bm{y}^{t})) \right) \\
 & = \dfrac{1}{S_{T}} \left( \sum_{\ell =1}^{k} \sum_{t=\tau_{\ell}}^{\tau_{\ell}+T_{\ell}} (\omega_{t+1}-\omega_{t}) (  A^{\bm{K}^{\ell}}(\bm{x},\bm{y},\bm{x}^{t},\bm{y}^{t}) \right) \nonumber \\
&   \leq  \dfrac{1}{S_{T}}\omega_{T} \Omega + \dfrac{1}{S_{T}} \sum_{\ell=1}^{k} \omega_{\tau_{\ell}} \LL^{\bm{K}^{\ell} - \bm{K}^{\ell-1}}(\bm{x}-\bm{x}^{t},\bm{y}-\bm{y}^{t}) \label{eq:bound-1} \\
&  \leq \dfrac{1}{S_{T}}\omega_{T} \Omega + \dfrac{4R_{X}R_{Y}}{S_{T}} \sum_{\ell=1}^{k} \omega_{\tau_{\ell}} L_{ \bm{K}^{\ell} - \bm{K}^{\ell-1}}  \nonumber \\
& \leq  \dfrac{1}{S_{T}}\omega_{T} \Omega + \dfrac{4R_{X}R_{Y}C}{S_{T}} \sum_{\ell=1}^{k} \omega_{\tau_{\ell}} \| \bm{v}^{\ell} - \bm{v}^{\ell-1} \|_{\infty} \label{eq:bound-2} \\
&  \leq  \dfrac{1}{S_{T}}\omega_{T} \Omega \nonumber \\
&  + \dfrac{4R_{X}R_{Y}C}{S_{T}} \sum_{\ell=1}^{k} \omega_{\tau_{\ell}}  \lambda^{\ell} (\| \bm{v}^{1} - \bm{v}^{0} \|_{\infty}  \\
& + \sum_{t=0}^{\ell-1} \left( \dfrac{\Omega_{t}}{T_{t}} + \dfrac{\Omega_{t-1}}{T_{t-1}} \right) \dfrac{1}{\lambda^{t}} ) \label{eq:bound-3}  \\
&  \leq  \dfrac{1}{S_{T}}\omega_{T} \Omega + \dfrac{4 err_{1,0}R_{X}R_{Y}C}{S_{T}} \sum_{\ell=1}^{k} \omega_{\tau_{\ell}}  \lambda^{\ell} \nonumber \\
& + \dfrac{4R_{X}R_{Y}\Omega C}{S_{T}} \sum_{\ell=1}^{k} \omega_{\tau_{\ell}}  \lambda^{\ell} \left( \sum_{t=0}^{\ell-1} \left( \dfrac{1}{T_{t}} + \dfrac{1}{T_{t-1}} \right) \dfrac{1}{\lambda^{t}} \right)\nonumber \\
& \leq e_{1} + e_{2} + e_{3},
\end{align}
where $C=1$ in the $\ell_{1}$ setup and $C=\sqrt{S}$ in the $\ell_{2}$ setup, where \eqref{eq:bound-1} follows from telescoping, \eqref{eq:bound-2} follows from Lemma \eqref{lem:norm-k}. Inequality \eqref{eq:bound-3} follows from Proposition \ref{prop:error_at_t} and $\epsilon_{t} = O\left (\Omega_{\ell}/T_{\ell} \right)$ in Proposition \ref{prop:descent-detail}, and 
\begin{align*}
e_{1}  & = \dfrac{1}{S_{T}}\omega_{T} \Omega,
e_{2}  = \dfrac{4err_{1,0}R_{X}R_{Y} C}{S_{T}} \sum_{\ell=1}^{k} \omega_{\tau_{\ell}}  \lambda^{\ell} , \\
e_{3} &  =\dfrac{4R_{X}R_{Y}\Omega C}{S_{T}} \sum_{\ell=1}^{k} \omega_{\tau_{\ell}}  \lambda^{\ell} \left( \sum_{t=0}^{\ell-1} \left( \dfrac{1}{T_{t}} + \dfrac{1}{T_{t-1}} \right) \dfrac{1}{\lambda^{t}} \right).
\end{align*}
\paragraph{Bounds on  \eqref{eq:bound_L_eq_2}.}
Note that 
\begin{align*}
 & \dfrac{1}{S_{T}} \left( \sum_{\ell =1}^{k} \sum_{t=\tau_{\ell}}^{\tau_{\ell}+T_{\ell}} \omega_{t} (  \LL^{\bm{K}^{*}-\bm{K}^{\ell}}(\bm{x}^{t},\bm{y}) - \LL^{\bm{K}^{*}-\bm{K}^{\ell}}(\bm{x},\bm{y}^{t})) \right) \leq \dfrac{2 R_{X}R_{Y}}{S_{T}}  \left( \sum_{\ell=1}^{k}  L_{ \bm{K}^{*} - \bm{K}^{\ell} }  \cdot \left(\sum_{t=\tau_{\ell}}^{\tau_{\ell}+T_{\ell}} \omega_{t} \right) \right) .
\end{align*}
Note that by Lemma \ref{lem:norm-k} we have $ L_{\bm{K}^{*} - \bm{K}^{\ell} } \leq C \| \bm{v}^{*} - \bm{v}^{\ell} \|_{\infty}$
and from Proposition \ref{prop:error_at_t} we have \[\| \bm{v}^{\ell} - \bm{v}^{*} \|_{\infty}  \leq \lambda^{\ell} \| \bm{v}^{0} - \bm{v}^{*} \|_{\infty} +\lambda^{\ell} \sum_{t=1}^{\ell-1} \dfrac{\Omega_{t}}{T_{t}\lambda^{t}} \] where the $\epsilon_{t}$ is replaced by $\Omega_{t} / T_{t},$ the precision attained after $T_{t}$ steps of PDA with payoff matrix $\bm{K}^{t}$.
This implies that we can have the following upper bound:
\[ L_{ \bm{K}^{\ell} - \bm{K}^{*} }  \leq C \lambda^{\ell} \| \bm{v}^{0} - \bm{v}^{*} \|_{\infty} +C\lambda^{\ell} \sum_{t=1}^{\ell-1} \dfrac{\Omega_{t}}{T_{t}\lambda^{t}} . \]
Overall, \eqref{eq:bound_L_eq_2} satisfies
\begin{align*}
& \dfrac{1}{S_{T}} \left( \sum_{\ell =1}^{k} \sum_{t=\tau_{\ell}}^{\tau_{\ell}+T_{\ell}} \omega_{t} (  \LL^{\bm{K}^{*}-\bm{K}^{\ell}}(\bm{x}^{t},\bm{y}) - \LL^{\bm{K}^{*}-\bm{K}^{\ell}}(\bm{x},\bm{y}^{t})) \right)      \leq \dfrac{2R_{X}R_{Y}C}{S_{T}} \sum_{\ell=1}^{k}  \left( \sum_{t=\tau_{\ell}}^{\tau_{\ell}+T_{\ell}} \omega_{t} \right) \cdot  \left( \lambda^{\ell} err_{0} +  \lambda^{\ell}\sum_{t=1}^{\ell-1} \dfrac{\Omega_{t} }{T_{t}\lambda^{t}} \right) \\
& \leq e_{4} + e_{5},
\end{align*}
where 
\begin{align*}
e_{4}  & =  \dfrac{2 err_{*,0} R_{X}R_{Y}C}{S_{T}} \sum_{\ell=1}^{k} \lambda^{\ell} \cdot \left( \sum_{t=\tau_{\ell}}^{\tau_{\ell}+T_{\ell}} \omega_{t} \right),  \\
e_{5} & =  \dfrac{2 R_{X}R_{Y}\Omega C}{S_{T} } \sum_{\ell=1}^{k} \lambda^{\ell} \left(  \sum_{t=1}^{\ell-1} \dfrac{1}{T_{t}\lambda^{t}} \right) \cdot \left( \sum_{t=\tau_{\ell}}^{\tau_{\ell}+T_{\ell}} \omega_{t} \right).
\end{align*}

The term $e_{1}$ is the upper bound that we would obtain if we had known the matrix $\bm{K}^{*}=\bm{K}[\bm{v}^{*}]$ from the start. The $e_{2}$ term comes from updating the value vector $\bm{v}^{\ell}$ to $\bm{v}^{\ell+1}$ at the end of the epoch $\ell$. The $e_{3}$ term comes from $\bm{v}^{\ell+1}$ being only an $O(1/T_{\ell})$ approximation of $F(\bm{v}^{\ell})$. The $e_{4}$ and $e_{5}$ terms are related to the error between $\bm{v}^{\ell}$ and $\bm{v}^{*}$.

We  emphasize the importance of warm-starting for Algorithm \ref{alg:PD-RMDP}. Crucially, by warm-starting the PDA algorithm at VI epoch $\ell+1$ using the last iterate of the PDA algorithm at VI epoch $\ell$, we are able maintain a telescopic sum from $t=0$ to $T=T_{1} + ... + T_{k}$. Without warm-starts, we would end up with $k$ independent telescopic sums (one per VI epoch). This would give an $e_{1}$ term of 
$\left(\sum_{\ell=1}^{k}\omega_{\tau_{\ell} + T_{\ell}}\right) \Omega / S_{T}$
 which is significantly worse than $\omega_{T} \Omega / S_{T}$, the $e_{1}$ term of Proposition \ref{thm:fom-vi rate}.

\paragraph{Convergence rates in terms of $T$}

We now investigate the convergence rate of the terms $e_{1}, ..., e_{5}$ in terms of the number of PD iterations $T$.
We have the following lemma.
For the sake of clarity, we hide in the $O \left( \cdot \right)$ notation any dependency on $S$ and $A$ (i.e. on $R_{X}, R_{Y}, \Omega$). 
\begin{lemma}\label{prop:rate-PD-general} Let $p, q \in \N$.
At time step $t \geq 0$, let $\omega_{t} = t^{p}, T_{\ell} = \ell^{q}.$ Then $\tau_{\ell} =O( \ell^{q+1}), T = O(k^{q+1}), S_{T} = O(k^{(p+1)(q+1)})$. Moreover,
\begin{align*}
e_{1} & = O \left(  \dfrac{1}{T} \right),  
e_{2}  = O \left( \dfrac{\lambda^{T^{1/(q+1)}}}{T} \right),  
e_{3}  = O \left( \dfrac{1}{T^{2q/(q+1)}} \right), \\
e_{4}  & = O \left( \dfrac{\lambda^{T^{1/(q+1)}}}{T^{1/(q+1)}} \right),
e_{5}  = O \left(\dfrac{1}{T^{q/(q+1)}} \right).
\end{align*}
\end{lemma}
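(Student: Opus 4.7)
The plan is to reduce every bound to a few elementary sum estimates and then invoke Lemmas \ref{lem:lambda-sum-1} and \ref{lem:lambda-sum-4}. First I would record the baseline scalings implied by $\omega_t=t^p$ and $T_\ell=\ell^q$: from the standard estimate $\sum_{\ell=1}^{k}\ell^q=\Theta(k^{q+1})$ and $\sum_{t=1}^{T}t^p=\Theta(T^{p+1})$ we get $\tau_\ell=\Theta(\ell^{q+1})$, $T=\Theta(k^{q+1})$, $S_T=\Theta(T^{p+1})=\Theta(k^{(p+1)(q+1)})$, $\omega_{\tau_\ell}=\Theta(\ell^{p(q+1)})$, and the per-epoch weight total $\sum_{t=\tau_\ell}^{\tau_\ell+T_\ell}\omega_t=\Theta(T_\ell\cdot\tau_\ell^{\,p})=\Theta(\ell^{p(q+1)+q})$. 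Then I would repeatedly use $k\asymp T^{1/(q+1)}$ to translate a bound in $k$ into one in $T$.

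For $e_1=\omega_T\Omega/S_T$, plugging in gives $T^p/T^{p+1}=O(1/T)$. For $e_2$, the sum $\sum_{\ell=1}^k\omega_{\tau_\ell}\lambda^\ell$ becomes $\sum_\ell \ell^{p(q+1)}\lambda^\ell$, which by Lemma \ref{lem:lambda-sum-1} (with $n=p(q+1)$) is $O(k^{p(q+1)}\lambda^k)$; dividing by $S_T$ and translating yields $O(\lambda^{T^{1/(q+1)}}/T)$. For $e_4$, the inner per-epoch weight total $\Theta(\ell^{p(q+1)+q})$ combines with $\lambda^\ell$ so that Lemma \ref{lem:lambda-sum-1} with $n=p(q+1)+q$ gives $O(k^{p(q+1)+q}\lambda^k)$; dividing by $S_T=\Theta(k^{p(q+1)+q+1})$ yields $O(\lambda^k/k)=O(\lambda^{T^{1/(q+1)}}/T^{1/(q+1)})$.

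The terms $e_3$ and $e_5$ are the only ones involving double sums, and this is where Lemma \ref{lem:lambda-sum-4} is essential. In both cases the inner sum reduces to $\sum_{t=1}^{\ell-1}\tfrac{1}{t^q\lambda^t}$ (after noting that $1/T_t+1/T_{t-1}=O(1/t^q)$), which by Lemma \ref{lem:lambda-sum-4} is $O\!\bigl(1/(\ell^q\lambda^\ell)\bigr)$. For $e_3$, multiplying by $\omega_{\tau_\ell}\lambda^\ell=\Theta(\ell^{p(q+1)}\lambda^\ell)$ collapses the $\lambda^\ell$ and leaves $\sum_{\ell}\ell^{p(q+1)-q}=\Theta(k^{p(q+1)-q+1})$; division by $S_T$ gives $O(k^{-2q})=O(1/T^{2q/(q+1)})$. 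For $e_5$, after the same inner reduction one multiplies by $\lambda^\ell$ (cancelling the $1/\lambda^\ell$) and by the per-epoch weight $\Theta(\ell^{p(q+1)+q})$, yielding $\sum_\ell\ell^{p(q+1)}=\Theta(k^{p(q+1)+1})$, whose ratio with $S_T$ is $O(k^{-q})=O(1/T^{q/(q+1)})$.

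The main technical obstacle is purely bookkeeping: keeping the exponents of $\ell$ (and of $k$, and then of $T$) straight across the five terms, and making sure that the geometric factor $\lambda^\ell$ is correctly either killed by $\lambda^{-t}$-type factors in the inner sum (as in $e_3,e_5$) or preserved (as in $e_2,e_4$). Once the scalings above are tabulated, each bound is a one-line substitution into Lemma \ref{lem:lambda-sum-1} or Lemma \ref{lem:lambda-sum-4} followed by the conversion $k=\Theta(T^{1/(q+1)})$. A minor subtlety to flag in the write-up is that $p$ drops out of every final exponent in $T$, which is the observation that explains the remark after Theorem \ref{th:main-complexity} that the choice of $p$ does not affect the theoretical rate.
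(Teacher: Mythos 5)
Your proposal is correct and follows essentially the same route as the paper: tabulate the scalings $\tau_\ell=\Theta(\ell^{q+1})$, $S_T=\Theta(T^{p+1})$, $\omega_{\tau_\ell}=\Theta(\ell^{p(q+1)})$ and the per-epoch weight total $\Theta(\ell^{(p+1)(q+1)-1})$, apply Lemma \ref{lem:lambda-sum-1} to the $\lambda^\ell$-weighted sums in $e_2,e_4$ and Lemma \ref{lem:lambda-sum-4} to the inner sums in $e_3,e_5$, then substitute $k=\Theta(T^{1/(q+1)})$. The only cosmetic difference is that you obtain the per-epoch weight total directly as $\Theta(T_\ell\cdot\tau_\ell^{\,p})$ rather than via the paper's binomial expansion, which yields the same exponent.
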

\begin{proof}
Let $\omega_{t} = t^{p}, T_{\ell} = \ell^{q}$, for $t \geq 1$ and $p,q \in \N$.
We have
\begin{align*}
T & = \sum_{\ell=1}^{k} T_{\ell} = \sum_{\ell=1}^{k} \ell^{q} = k^{q+1}, \\
\tau_{\ell} & = \sum_{i=1}^{\ell} T_{i} \ell^{q+1}, \\
S_{T} & = \sum_{t=1}^{T} \omega_{t} = \sum_{t=1}^{k^{q+1}} t^{p} = k^{(q+1)(p+1)} = T^{p+1}.
\end{align*}
For the sake of readability in the next bounds we hide the $O(\cdot )$ notations.
\paragraph{Bounds on $e_{1}$.} We have
\[ e_{1}  =  \dfrac{\omega_{T}}{S_{T}} =  \dfrac{T^{p}}{T^{p+1}} = \dfrac{1}{T}.\]
\paragraph{Bounds on $e_{2}$.} We have
\begin{align*}
e_{2}  & = \dfrac{1}{T^{p+1}} \sum_{\ell=1}^{k} \tau_{\ell}^{p} \lambda^{\ell} 
  =  \dfrac{1}{T^{p+1}} \sum_{\ell=1}^{k} \ell^{p(q+1)} \lambda^{\ell} =  \dfrac{1}{T^{p+1}} k^{p(q+1)} \lambda^{k} 
 =  \dfrac{1}{T^{p+1}} T^{p} \lambda^{T^{1/(q+1)}} 
 =  \dfrac{\lambda^{T^{1/(q+1)}}}{T}.
\end{align*}
\paragraph{Bounds on $e_{3}$.} We have
\begin{align*}
e_{3}  & = \dfrac{1}{k^{(p+1)(q+1)} }\sum_{\ell=1}^{k} \ell^{p(q+1)} \lambda^{\ell} \dfrac{1}{\ell^{q}\lambda^{\ell}}   = \dfrac{1}{k^{(p+1)(q+1)}} \sum_{\ell=1}^{k} \ell^{p(q+1)-q}  = \dfrac{1}{k^{(p+1)(q+1)}} k^{p(q+1)-q+1}  = \dfrac{1}{k^{2q}} 
  = \dfrac{1}{T^{2q/(q+1)}}.
\end{align*}
\paragraph{Bounds on $e_{4}$.}
First we need to compute the term $\sum_{t=\tau_{\ell}}^{\tau_{\ell}+T_{\ell}} \omega_{t}$.
We have
\begin{align*}
\sum_{t=\tau_{\ell}}^{\tau_{\ell}+T_{\ell}} \omega_{t} & = \sum_{t=\ell^{q+1}}^{\ell^{q+1}+\ell^{q}} t^{p}  = \sum_{t=0}^{\ell^{q}} \left( \ell^{q+1} + t \right)^{p}  = \sum_{t=0}^{\ell^{q}} \sum_{u=0}^{p} {p \choose u} \ell^{u(q+1)} t^{p-u}   = \sum_{u=0}^{p}  {p \choose u} \ell^{u(q+1)}   \sum_{t=0}^{\ell^{q}}  t^{p-u} \\
& = \sum_{u=0}^{p}  {p \choose u} \ell^{u(q+1)} \left( \ell^{q} \right)^{p-u+1}  = \sum_{u=0}^{p}  {p \choose u} \ell^{uq + u + qp - qu + q} = \sum_{u=0}^{p}  {p \choose u} \ell^{ u + (p+1)q }   = \ell^{(p+1)q} \sum_{u=0}^{p}  {p \choose u} \ell^{ u  } \\
& = \ell^{(p+1)q} (1+\ell)^{p} 
 = \ell^{(p+1)q + p}
 = \ell^{(p+1)(q+1)-1}.
\end{align*}
Now we have
\begin{align*}
e_{4}  & = \dfrac{1}{k^{(p+1)(q+1)}} \sum_{\ell=1}^{k} \lambda^{\ell}  \ell^{(p+1)(q+1)-1} = \dfrac{1}{k^{(p+1)(q+1)}} \lambda^{k} k^{(p+1)(q+1)-1} 
 = \dfrac{ \lambda^{k}}{k} 
 = \dfrac{\lambda^{T^{1/(q+1)}}}{T^{1/(q+1)}}.
\end{align*}
\paragraph{Bounds on $e_{5}$.} Let us bound $e_{5}$.
\begin{align*}
e_{5} & = \dfrac{1}{k^{(p+1)(q+1)}} \sum_{\ell=1}^{k} \lambda^{\ell} \dfrac{1}{\ell^{q}\lambda^{\ell}} \ell^{(p+1)(q+1)-1}  =  \dfrac{1}{k^{(p+1)(q+1)}} \sum_{\ell=1}^{k} \ell^{pq + p}  \\
 & = \dfrac{1}{k^{(p+1)(q+1)}}  k^{pq+p + 1} 
 =  \dfrac{1}{k^{q}} 
 = \dfrac{1}{T^{q/(q+1)}}. 
\end{align*}
\end{proof}
Theorem \ref{thm:fom-vi rate} follows directly from the previous lemma and our bound involving $e_{1}, ..., e_{5}$.
\section{Proof of Proposition \ref{prop:prop-fast-y-update-ell2-ell1}} \label{app:prop-fast-y-update}
Let
$ B_{2}(\bm{y}_{0}, \alpha) = \{ \bm{y} \in \R^{A \times S} \; | \; \dfrac{1}{2} \| \bm{y} - \bm{y}_{0} \|_{2}^{2} \leq \alpha \}.$
\subsection{$\ell_{2}$ setup for $y$}
The proximal update for $y$ becomes
\begin{align*}
\min \; & \langle\bm{y}_{s}, \bm{d}_{s}\rangle + \dfrac{1}{2\sigma}  \| \bm{y} - \bm{y'} \|^{2}_{2} \\
& \bm{y} = \left( \bm{y}_{a} \right)_{a \in \A} \in \left( \Delta(S) \right)^{A}, \\
& \bm{y} \in B_{2}(\bm{y}_{0}, \alpha).
\end{align*}
\paragraph{Introduce Lagrange multiplier for ball constraint.}
Let us write the Lagrangian function $F(\bm{y},\mu)$, where we introduce a Lagrangian multiplier $\mu \geq 0$ for the ball constraint, but we leave the simplex constraint unchanged:
\[ F(\bm{y},\mu) = \langle\bm{y}_{s}, \bm{d}_{s}\rangle + \dfrac{1}{2\sigma}  \| \bm{y} - \bm{y'} \|^{2}_{2} + \dfrac{\mu}{2}  \left( \| \bm{y} - \bm{y}_{0} \|^{2}_{2} - 2 \alpha \right).\]
Let us show that we can compute $\arg \min_{\bm{y} \in \left( \Delta(S) \right)^{A}} F(\bm{y},\mu)$ in complexity $O( AS \log(S))$.
Indeed,
\begin{align*}
 \arg \min_{\bm{y} \in \left( \Delta(S) \right)^{A}} F(\bm{y},\mu)  =
  \arg \min_{\bm{y} \in \left( \Delta(S) \right)^{A}} \dfrac{1}{2} \| \bm{y} - \dfrac{\sigma}{1+\sigma\mu} \left(  \dfrac{1}{\sigma}\bm{y'} + \mu \bm{y}_{0} - \bm{d} \right) \|_{2}^{2} &.
\end{align*}
Therefore, we can reduce $\arg \min_{\bm{y} \in \left( \Delta(S) \right)^{A}} F(\bm{y},\mu)$ to solving $A$ Euclidean projections on the simplex $\Delta(S)$. Each Euclidean projection on the simplex $\Delta(S)$ can be done in $O(S\log(S))$ \citep{euclidean-projection}.
\paragraph{Binary search for optimal Lagrange multiplier $\mu^{*}.$}
Note that by definition, $q : \mu \mapsto F(\bm{x}^{*}(\mu),\mu)$ is a concave function on $\R_{+}$. Therefore, if we have an upper bound $\bar{\mu}$ on $\mu^{*}$ an optimal Lagrange multiplier, we can binary search the interval $[0,\bar{\mu}]$ to find a maximum of $q$.
\paragraph{Upper bound on the Lagrange multiplier.}
Note that 
\begin{align}
q(\mu)& =-\mu \alpha +   \min_{\bm{y} \in \left( \Delta(S) \right)^{A} } \langle \bm{y},\bm{d} \rangle + \dfrac{1}{2\sigma} \| \bm{y} - \bm{y'} \|_{2}^{2} + \dfrac{\mu}{2} \| \bm{y} - \bm{y}_{0} \|_{2}^{2} \nonumber \\
& \leq -\mu \alpha +  \langle \bm{y}_{0},\bm{d} \rangle + \dfrac{1}{2\sigma} \| \bm{y}_{0} - \bm{y'} \|_{2}^{2}. \label{eq:upper-bound-on-q}
\end{align}
Note that $q:\mu \mapsto q(\mu)$ is concave on $\R^{+}$. Therefore if we found $\bar{\mu}$ such that $q(\bar{\mu}) \leq q(0)$, we can claim that $\mu^{*} \in [0,\bar{\mu}]$, where $\mu^{*}$ attains the maximum of $q$. Using our upper bound \eqref{eq:upper-bound-on-q} on $q(\cdot)$ we know that we can choose any $\bar{\mu}$ such that
$-\mu \alpha +  \langle \bm{y}_{0},\bm{d} \rangle + \dfrac{1}{2\sigma} \| \bm{y}_{0} - \bm{y'} \|_{2}^{2} \leq q(0)$,
i.e. we choose an upper bound $\bar{\mu}$ as
\begin{equation*}
\bar{\mu} = \dfrac{1}{\alpha} \left( \langle \bm{y}_{0},\bm{d} \rangle + \dfrac{1}{2\sigma} \| \bm{y}_{0} - \bm{y'} \|_{2}^{2} - q(0) \right).
\end{equation*}

\subsection{$\ell_{1}$ setup for $y$}
Let us fix $\beta \in \R$. For $\bm{d'} \in \R^{A \times S}, d'_{as'} = d_{as'} - (\beta/\sigma) \log(y'_{as'}),$ we can write the proximal update as
\begin{equation}
\begin{array}{rl}
\arg\min_{\bm{y}} \; & \langle \bm{y}, \bm{d'}_{s} \rangle + \dfrac{\beta}{\sigma} \sum_{a=1}^{A} \sum_{s'=1}^{S} y_{as'}\log y_{as'} \\
& \bm{y} = \left( \bm{y}_{a} \right)_{a \in \A} \in \left( \Delta(S) \right)^{A}, \\
& \bm{y} \in B_{2}(\bm{y}_{0}, \alpha).
\end{array}
\label{eq:l1 proximal mapping}
\end{equation}
\paragraph{Introduce Lagrange multiplier for ball constraint.}
Let us write the Lagrangian function $F(\bm{y},\mu)$, where we introduce a Lagrange multiplier $\mu \geq 0$ for the ball constraint, but we leave the simplex constraint unchanged.
\begin{align*}
  F(\bm{y},\mu) &= \langle  \bm{y}, \bm{d'}_{s} \rangle + \dfrac{\beta}{\sigma} \sum_{a=1}^{A} \sum_{s'=1}^{S} y_{as'}\log y_{as'} + \dfrac{\mu}{2}  \left( \| \bm{y} - \bm{y}_{0} \|^{2}_{2} - \alpha \right) \\
   &= \sum_{a=1}^{A}   \langle  \bm{y}_a, \bm{d'}_{sa} \rangle + \dfrac{\beta}{\sigma} \sum_{s'=1}^{S} y_{as'}\log y_{as'}  + \dfrac{\mu}{2}  \left(  \| \bm{y}_a - \bm{y}_{0,a} \|^{2}_{2} - \alpha \right).
\end{align*}
The key observation is that $F(\bm{y},\mu)$ is separable over the actions $a$. Therefore, in order to solve $\arg\min_{\bm{y} \in (\Delta(S))^A} F(\bm{y},\mu)$ we can solve $A$ subproblems, where for each $a=1,\ldots, A$ we solve the problem
\begin{equation}
\begin{aligned}\label{eq:sub-problem-a}
\arg\min_{\bm{y}_a \in \Delta(S)}
  \langle  \bm{y}_a, \bm{d'}_{sa} \rangle  & + \dfrac{\beta}{\sigma} \sum_{s'=1}^{S} y_{as'}\log y_{as'}+ \dfrac{\mu}{2}  \left(  \| \bm{y}_a - \bm{y}_{0,a} \|^{2}_{2} - \alpha \right).
\end{aligned}
\end{equation}
  
\paragraph{Introduce Lagrange multiplier for simplex constraint.}
We now introduce a further relaxation for each problem \eqref{eq:sub-problem-a}, by relaxing the simplex constraint using a Lagrange multiplier $\nu$ as follows:
\begin{align}
  & \arg\min_{\bm{y}_a \geq 0}
  \langle  \bm{y}_a, \bm{d'}_{sa} \rangle + \dfrac{\beta}{\sigma} \sum_{s'=1}^{S} y_{as'}\log y_{as'}  + \dfrac{\mu}{2}  \left(  \| \bm{y}_a - \bm{y}_{0,a} \|^{2}_{2} - \alpha \right) + \nu(\sum_{s'} y_{as'} - 1) \nonumber\\
  = &
  \arg\min_{\bm{y}_a \geq 0}
  \langle  \bm{y}_a, \bm{d'}_{sa} + \nu \rangle + \dfrac{\beta}{\sigma} \sum_{s'=1}^{S} y_{as'}\log y_{as'} + \dfrac{\mu}{2}  \| \bm{y}_a - \bm{y}_{0,a} \|^{2}_{2} \nonumber\\
  = &
  \arg\min_{\bm{y}_a \geq 0}
  \dfrac{\beta}{\sigma} \sum_{s'=1}^{S} y_{as'}\log y_{as'} + \dfrac{\mu}{2}  \| \bm{y}_a - \bm{y}_{0,a} + \dfrac{1}{\mu}(d'_{sa} + \nu) \|^{2}_{2} \nonumber\\
  = &
  \arg\min_{\bm{y}_a \geq 0}
   \sum_{s'=1}^{S} y_{as'}\log y_{as'} + \dfrac{\sigma\mu}{2\beta}  \| \bm{y}_a - \bm{y}_{0,a} + \dfrac{1}{\mu}(d'_{sa} + \nu) \|^{2}_{2}.
  \label{eq:var-wise simplex update}
\end{align}

We now arrive at a problem that decomposes into simple variable-wise updates: the negative entropy proximal mapping. For each variable $y_{a's}$ the update \eqref{eq:var-wise simplex update} is known to be equal (\cite{combettes2011proximal}) to
\begin{equation}\label{eq:lambert-update}
y_{as'} = \dfrac{\beta }{\sigma\mu} W\left( \dfrac{\sigma\mu}{\beta} \exp\left(\dfrac{\sigma\mu}{\beta}\left(y_{0,as'} - \dfrac{1}{\mu}(d'_{sas'}+\nu)\right) - 1\right) \right)
\end{equation}
where $W$ is the principal branch of the \emph{Lambert W function}, which is defined as the inverse of $w \mapsto w\log w$. The inverse is unique for $w\in [0,\infty)$. This function is not simple, but it can be computed quickly, and has standard implementations in the major numerical computing languages (e.g. in SciPy). As a heuristic benchmark, evaluating $W(a), a \in \mathbb{R}_{++}$ using SciPy takes about twice as long as evaluating $exp(a)$ (using numpy libraries for all function evaluations), based on generating 1000 random numbers in [0,1000].
Now we may find the appropriate $\nu^{*}$ such that the sum-to-one constraint is satisfied by binary search $\nu$.

\paragraph{Binary search for $\nu^{*}$.} Let $\nu \in \R$ and
 and $\bm{y}_{a}(\nu)$ the associated solution obtained from \eqref{eq:lambert-update}. If $\sum_{s'}^{S} y_{a}(\nu)_{s'} > 1$, then $\nu$ is a lower bound on $\nu^{*}$. Similarly, if $\sum_{s'}^{S} y_{a}(\nu)_{s'} < 1$, then $\nu$ is an upper bound on $\nu^{*}$. Since we know that $\nu^{*} > - \infty$, we can explore the set $\{ -2^{\ell} \; | \; \ell \geq 0 \}$ until we found a lower bound on $\nu^{*}$. If in this set we also found $\nu$ such that $\sum_{s'}^{S} y_{a}(\nu)_{s'} > 1$ then we also obtain an upper bound on $\nu^{*}$. Otherwise, we can explore the set $\{ 2^{\ell}  \;| \; \ell \geq 0 \}$ to find an upper bound on $\nu^{*}$.

Finally we get that we can reduce $\arg \min_{\bm{y} \in \left( \Delta(S) \right)^{A}} F(\bm{y},\mu)$ to a problem that can be solved in $\log(1/\epsilon)$ time, when treating evaluations of the Lambert W function as a constant.

Now that we have a method for computing $\arg \min_{\bm{y} \in \left( \Delta(S) \right)^{A}} F(\bm{y},\mu)$, we can now binary search the Lagrange multiplier $\mu$ in order to find a feasible solution to \eqref{eq:l1 proximal mapping}. 
\paragraph{Upper bound on $\mu^{*}$.} We know that $\mu^{*} \in [0,+\infty),$ where $\mu^{*}$ is an argmax of
\begin{align*}
q :\mu \mapsto \min_{\bm{y}_{a} \in \Delta(S)} \langle \bm{y}_{a},d_{sa} \rangle & + \dfrac{\beta}{\sigma} KL(\bm{y}_{a},\bm{y'}_{a}) + \mu \left( \| \bm{y}_{a} - \bm{y}^{0}_{a} \|^{2} - \alpha \right).
\end{align*} 
\begin{itemize}
\item There is a closed form solution for $q(0)$ since  this is the proximal update for the relative entropy.
\item We know that 
\[ q(\mu) \leq -\mu \alpha + \langle \bm{y}^{0}_{a},\bm{d}_{sa} \rangle + \dfrac{\beta}{\sigma} KL\left(\bm{y}^{0}_{a},\bm{y'}_{a} \right).\]
\item Therefore an upper bound $\bar{\mu}$ for $\mu^{*}$ is
\[ \bar{\mu} = \dfrac{1}{\alpha} \left( \langle \bm{y}^{0}_{a},\bm{d}_{sa} \rangle + \dfrac{\beta}{\sigma} KL\left(\bm{y}^{0}_{a},\bm{y'}_{a} \right) - q(0) \right).\]
\end{itemize}
We can then perform a binary search for $\mu^{*}$ in $[0,\bar{\mu}]$ exactly as for the $\ell_{2}$ setup.
\paragraph{Choice of the parameter $\beta$.}
Now we need to choose $\beta$ such that $\psi$ becomes strongly convex modulus $1$. If we set $\beta = \frac{A}{2}$ then we get strong convexity modulus $1$ with respect to the $\ell_1$ norm. To show this, we use the second-order definition of strong convexity:
\[
  \langle \nabla^2\psi(y)h, h \rangle \geq \|h\|_1^2, \forall y \in Y, h \in \mathbb{R}^{AS}
\]
Taking an arbitrary $h \in \mathbb{R}^{AS}$ we get from Cauchy-Schwarz:
\begin{align*}
  \left(\sum_a \sum_{s'} h_{as'}\right)^2
  & = \sum_a \left(\sum_{s'} h_{as'}\right)^2 + \sum_{a,a'} \left(\sum_{s'} h_{as'}\right)\left(\sum_{s'}h_{a's'}\right) \\
 &  \leq   \sum_a \frac{A}{2}\left(\sum_{s'} h_{as'}\right)^2    =  \sum_a \frac{A}{2}\left(\sum_{s'} \frac{h_{as'}}{\sqrt{y_{as'}}}\sqrt{y_{as'}}\right)^2 \\
 &  \leq   \sum_a \frac{A}{2}\|\sqrt{y_a}\|_2^2 \left(\sum_{s'} \frac{h_{as'}^2}{y_{as'}}\right)^2 \ =  \sum_a \frac{A}{2}\left(\sum_{s'} \frac{h_{as'}^2}{y_{as'}}\right)^2,
\end{align*}
which shows strong convexity modulus $1$ with respect to the $\ell_1$ norm.
\begin{remark} It may be possible to choose a stronger constant $\beta$, following \cite{juditsky-nemirovski-2011}, Chapter 5, pages 23-24. However, this would require to introduce a modified norm for element $(\bm{y}_{sa})_{a \in \A}$ of the set $\PP_{s}$. We leave this (potential) improvement for future work.
\end{remark}

\section{Details on the complexities of Theorem \ref{th:main-complexity}}\label{app:complexity-table}
\subsection{Summary of proximal setups for ellipsoidal uncertainty sets}
We first present a summary of the different sets and constants defined in this paper. 

\paragraph{Sets.}
\begin{enumerate}
\item $X = \Delta(A)$,
\item $Y = \left(\Delta(S) \right)^{A} \bigcap B_{2}(\bm{y}_{0},\alpha)$.
\end{enumerate}

\paragraph{Diameter and complexities.}
We call $R$ the maximum of the considered norm on the considered set $Z$: $R = \max_{z \in Z} \| z \|_{Z}$. We call $\Theta$ the maximum of the Bregman divergence $D$ on the considered set $Z$: $\Theta = \max_{z,z' \in Z} D(z,z')$. The complexity of computing the proximal update up to $\epsilon$  is $C_{\epsilon}$. We have:
\begin{enumerate}
\item For $\| \cdot \|_{X} = \| \cdot \|_{1}$:
\begin{itemize}
\item $\psi_{X} =$ entropy,
\item $R_{X} = O(1), \Theta_{X} = O(\log(A))$,
\item $comp_{\epsilon} = O(A)$.
\end{itemize}
\item For $\| \cdot \|_{X} = \| \cdot \|_{2}$:
\begin{itemize}
\item $\psi_{X} = (1/2) \| \cdot \|_{2}^{2}$,
\item $R_{X} = O(1), \Theta_{X} = O(1)$,
\item $comp_{\epsilon} = O(A\log(A))$.
\end{itemize}
\item For $\| \cdot \|_{Y} = \| \cdot \|_{1}$:
\begin{itemize}
\item $\psi_{Y} =$ sum-entropy,
\item $R_{Y} = O(A), \Theta_{Y} = O(A^{2}\log(S))$,
\item $comp_{\epsilon} = O(AS\log^{2}(\epsilon^{-1}))$.
\end{itemize}
\item For $\| \cdot \|_{Y} = \| \cdot \|_{2}$:
\begin{itemize}
\item $\psi_{Y} = (1/2) \| \cdot \|_{2}^{2}$,
\item $R_{Y} = O(\sqrt{A}), \Theta_{Y} = O(A)$,
\item $comp_{\epsilon} = O(AS\log(S)\log(\epsilon^{-1}))$.
\end{itemize}
\end{enumerate}
\subsection{Convergence in terms of number of iterations}
Note that the results of Theorem \ref{th:main-complexity} follows from the rate in Theorem \ref{thm:fom-vi rate} and the value of $R_{X}, R_{Y}, \Theta_{X}, \Theta_{Y}$ given in the above section. Here we provide details of the convergence rate computation in the case $q=2$.

\subsection{Overall complexity analysis for $(\| \cdot \|_{X}, \| \cdot \|_{Y}) = (\| \cdot \|_{1}, \| \cdot \|_{1})$}\label{app:conv-ell-1}
\paragraph{Step sizes}
For the $\ell_{1}$ setup, following Lemma \ref{lem:norm-k}, we have $L_{\bm{K}^{\ell}}= \lambda \| \bm{v}^{\ell} \|_{\infty},$ for any epoch $\ell$. Note that at epoch $\ell$, by construction, the vector $\bm{v}^{\ell}$ corresponds to the reward obtained after $\ell$ periods by the sequence $(\bar{\bm{x}}^{\tau_{\ell}},\bar{\bm{y}}^{\tau_{\ell}}, ..., \bar{\bm{x}}^{0},\bar{\bm{y}}^{0}).$
This implies that
$ \| \bm{v}^{\ell} \|_{\infty} \leq r_{\infty}(1-\lambda^{\ell+1})(1-\lambda)^{-1} \leq r_{\infty}(1-\lambda)^{-1},$ where $r_{\infty} = \max_{s,a}  c_{sa}$.
Therefore in the $\ell_{1}$ setup we can choose
\begin{align*}
 \tau & = \dfrac{1-\lambda}{\lambda r_{\infty} A } \dfrac{\sqrt{\log(A)}}{\sqrt{\log(S)}}, \\
  \sigma & = \dfrac{(1-\lambda)A}{\lambda r_{\infty}}  \dfrac{\sqrt{\log(A)}}{\sqrt{\log(S)}} \\
\Omega & = 2 A  \sqrt{\dfrac{\log(S)}{\log(A)}} \dfrac{\lambda r_{\infty}}{1-\lambda}.
\end{align*}

We combine the definitions of the terms $e_{1}, ..., e_{5}$ from Proposition \ref{thm:fom-vi rate} (which includes the constants $R, \Theta, \Omega$ and $C$) with the convergence rates of Proposition \ref{prop:rate-PD-general}. Since $e_{5}$ has the slowest convergence rate, we use this term to give the overall number of arithmetic operations for Algorithm \ref{alg:PD-RMDP} to return an $\epsilon$-optimal solution to the robust MDP problem.
\paragraph{Convergence rate of PD.} For $q=2$, the error bounds of Proposition \ref{prop:rate-PD-general} become:
 \begin{align*}
e_{1} & = O \left( \dfrac{A \sqrt{\log(S)}}{T \sqrt{\log(A)}} \right), 
e_{2} = O \left( \dfrac{A\lambda^{T^{1/3}} }{T} \right), \\
e_{3} & =  O \left(  \dfrac{ A^{2} \sqrt{\log(S)}}{T^{4/3}\sqrt{\log(A)}} \right)  , 
e_{4}  = O \left( \dfrac{A^{2}\sqrt{ \log(S)}\lambda^{T^{1/3}}}{T^{1/3}\sqrt{\log(A)}}  \right),  \\
e_{5} & =  O \left(\dfrac{A^{2} \sqrt{\log(S)}}{T^{2/3}\sqrt{\log(A)}}  \right). 
\end{align*}
\paragraph{Complexity of PD update.}
For each epoch $\ell=1, ..., k$, solving each proximal update with accuracy $\epsilon>0$, the complexity of epoch $\ell$ is as follows.
\begin{align*}
comp_{\ell} & = O \left( \left(  AS^{2}\log^{2}(\epsilon^{-1})\right) T_{ell} \right).
\end{align*}
The overall complexity after $T=T_{1} + ... + T_{k}$ iterations is
\[ comp = O\left(  \left(  AS^{2} \log^{2}(\epsilon^{-1}) \right) T \right).\]
Since the $e_{5}$ term is the slowest to converge, for $q=2$ the number of arithmetic operations in order to obtain a $\epsilon$-optimal pairs in the robust MDP problem is 
$ O \left(A^{4}S^{2} \left( \dfrac{\log(S)}{\log(A)}\right)^{0.75} \log^{2}(\epsilon^{-1}) \epsilon^{-1.5}\right).$
 \subsection{Overall complexity analysis for $(\| \cdot \|_{X}, \| \cdot \|_{Y}) = (\| \cdot \|_{2}, \| \cdot \|_{2})$}\label{app:conv-ell-2}
 \paragraph{Step sizes}
 The same argument as in the $\ell_{1}$ setup, along with the equivalence between $\| \cdot \|_{2}$ and $\| \cdot \|_{\infty}$ in $\R^{S}$, yields
 \begin{align*}
 \tau & = \dfrac{1-\lambda}{\lambda r_{\infty} \sqrt{A} \sqrt{S}},\\
 \sigma & = \dfrac{(1-\lambda)\sqrt{A}}{\lambda r_{\infty} \sqrt{S}} \\
\Omega & = 2 \sqrt{A} \sqrt{S} \dfrac{\lambda r_{\infty}}{1-\lambda}.
\end{align*}
\paragraph{Convergence rate of PD.}
The error bounds of Proposition \ref{prop:rate-PD-general} become, for $q=2$,:
 \begin{align*}
e_{1} & = O \left( \dfrac{\sqrt{AS}}{T} \right), 
e_{2}= O \left( \dfrac{\sqrt{AS}\lambda^{T^{1/3}} }{T} \right), 
e_{3}  =  O \left(  \dfrac{ AS}{T^{4/3}} \right)  , \\
e_{4} & = O \left( \dfrac{AS\lambda^{T^{1/3}}}{T^{1/3}}  \right), 
e_{5}  =  O \left(\dfrac{AS}{T^{2/3}}  \right). 
\end{align*}
\paragraph{Complexity of PD update.}
For each epoch $\ell=1, ..., k$, the complexity of epoch $\ell$ is as follows.
\begin{align*}
comp_{\ell} = O \left( \left( SA\log(A)  + AS^{2} \log(S)\log(\epsilon^{-1})\right) T_{ell} \right).
\end{align*}
The overall complexity after $T=T_{1} + ... + T_{k}$ iterations is
\[ comp = O\left(  \left( SA\log(A) + AS^{2} \log(S)\log(\epsilon^{-1}) \right) T \right).\]
Typically, $\log(A) \leq S$, and we have 
$ comp = O\left(   AS^{2} \log(S)\log(\epsilon^{-1}) T \right).$
Therefore, for $q=2$, the number of arithmetic operations in order to obtain a $\epsilon$-optimal pairs in the robust MDP problem is
$ O \left( A^{2.5}S^{3.5} \log(S) \log(\epsilon^{-1})\epsilon^{-1.5} \right).$

\section{Kullback-Leibler uncertainty set}\label{app:KL-uncertainty-set}
We present here our complexity result for the KL uncertainty set. Recall that the KL uncertainty set is defined as
\begin{align*}
\PP & = \times_{s \in \X} \PP_{s}, \\
\PP_{s} & =\{\left( \bm{y}_{sa} \right)_{a \in \A} \in (\Delta(S))^{A} \; | \; \sum_{a \in \A} KL(\bm{y}_{sa},\bm{y}^{0}_{sa}) \leq \alpha\}.
\end{align*}

We now prove Proposition~\ref{prop:prop-fast-y-update-KL}.
As the proof follows closely the lines of the proofs for the proximal updates on the ellipsoidal uncertainty set, for the sake of conciseness we only present an outline here.
\begin{proof}
\noindent
\textbf{$\ell_{2}$ setup.} We introduce a Lagrange multiplier for the KL constraint, and the proximal update boils down to solving $A$ subproblems, each consisting ot optimizing the sum of a linear form, an entropy function and an $\ell_{2}$ distance. This is equivalent to solving subproblems of the form \eqref{eq:sub-problem-a}. Therefore, the $\ell_{2}$ proximal update for a KL uncertainty set can be approximated within accuracy $\epsilon$ in $O\left( AS  \log^2(\epsilon^{-1})\right)$.

\noindent
\textbf{$\ell_{1}$ setup.}   For the $\ell_{1}$ setup, we can introduce a Lagrange multiplier for the KL constraint; the objective becomes separable into $A$ subproblems, each requiring to optimize (over the simplex of size $\Delta(S)$) the sum of a linear form and two KL terms, which brings down to optimizing, over the simplex, the sum of a linear form and a KL term. This can be computed in closed-form, and the $\ell_{2}$ proximal update boils down to a bisection search onto the Lagrange multiplier. Therefore, the $\ell_{2}$ proximal update for a KL uncertainty set can be approximated within accuracy  $O\left( AS  \log(\epsilon^{-1}) \right)$.
\end{proof}

\section{Performance measures for our simulations}\label{app:upper-bounds}
\paragraph{Computing (DG).}
In order to compute (DG) for a pair $\bm{x},\bm{y}$, we need to evaluate $\max_{\bm{y'} \in \PP} R(\bm{x},\bm{y'}) $ and $\min_{\bm{x'} \in \Pi} R(\bm{x'},\bm{y}).$
Following \cite{Kuhn}, $\max_{\bm{y'} \in \PP} R(\bm{x},\bm{y'}) $ can be computed by finding the fixed point of the following operator, which is a contraction of factor $\lambda$:
\[ F^{\bm{x}}(\bm{v})_{s} = \max_{\bm{y'}_{s} \in \PP} \sum_{a=1}^{A} x_{sa} \left( c_{sa} + \lambda \bm{y'}^{\top}\bm{v} \right), \forall \; s \in \X.\]
Moreover, computing $\min_{\bm{x'} \in \Pi} R(\bm{x'},\bm{y})$ is equivalent to solving the (nominal) MDP with fixed kernel $\bm{y} \in \PP$. Following \cite{Puterman}, Chapter 6.3, this can be solved by iterating the following contraction of factor $\lambda$:
\[ F^{\bm{y}}(\bm{v})_{s} = \min_{\bm{x_{s}} \in \Delta(A)} \sum_{a=1}^{A} x_{sa} \left( c_{sa} + \lambda \bm{y'}^{\top}\bm{v} \right), \forall \; s \in \X.\]
Each of these iterative algorithms can be stopped as soon $\| \bm{v}^{\ell+1} - \bm{v}^{\ell} \|_{\infty} < 2 \lambda \epsilon (1-\lambda)^{-1}$, which ensures $\epsilon$-optimality of the current iterates \citep{Puterman}, Chapter 6.3.

We present in the next figure the running times to compute (DG), both with Algorithm \ref{alg:VI} and Algorithm \ref{alg:AVI}. In particular, we generate 10 random Garnet MDP instances (see simulation section in the main body and next section), some random policies in $\Pi$, kernels in $\PP$ and vectors in $\R^{S}$ and we compute (DG). We present the logarithm of the average running times to obtain $\epsilon$-approximations of the quantities of interest, for $\epsilon=0.25$ and $\alpha = \sqrt{n_{branch} \times A}.$ We present our results for $\lambda=0.6$ in Figure \ref{fig:UB_comp_lambda_60} and for $\lambda=0.8$ in Figure \ref{fig:UB_comp_lambda_80}. We notice that computing (DG) quickly becomes very expensive, even using Algorithm \ref{alg:AVI}.

  \begin{figure*}[htp]
  \begin{subfigure}{0.4\textwidth}
\centering
  \includegraphics[width=1.0\linewidth]{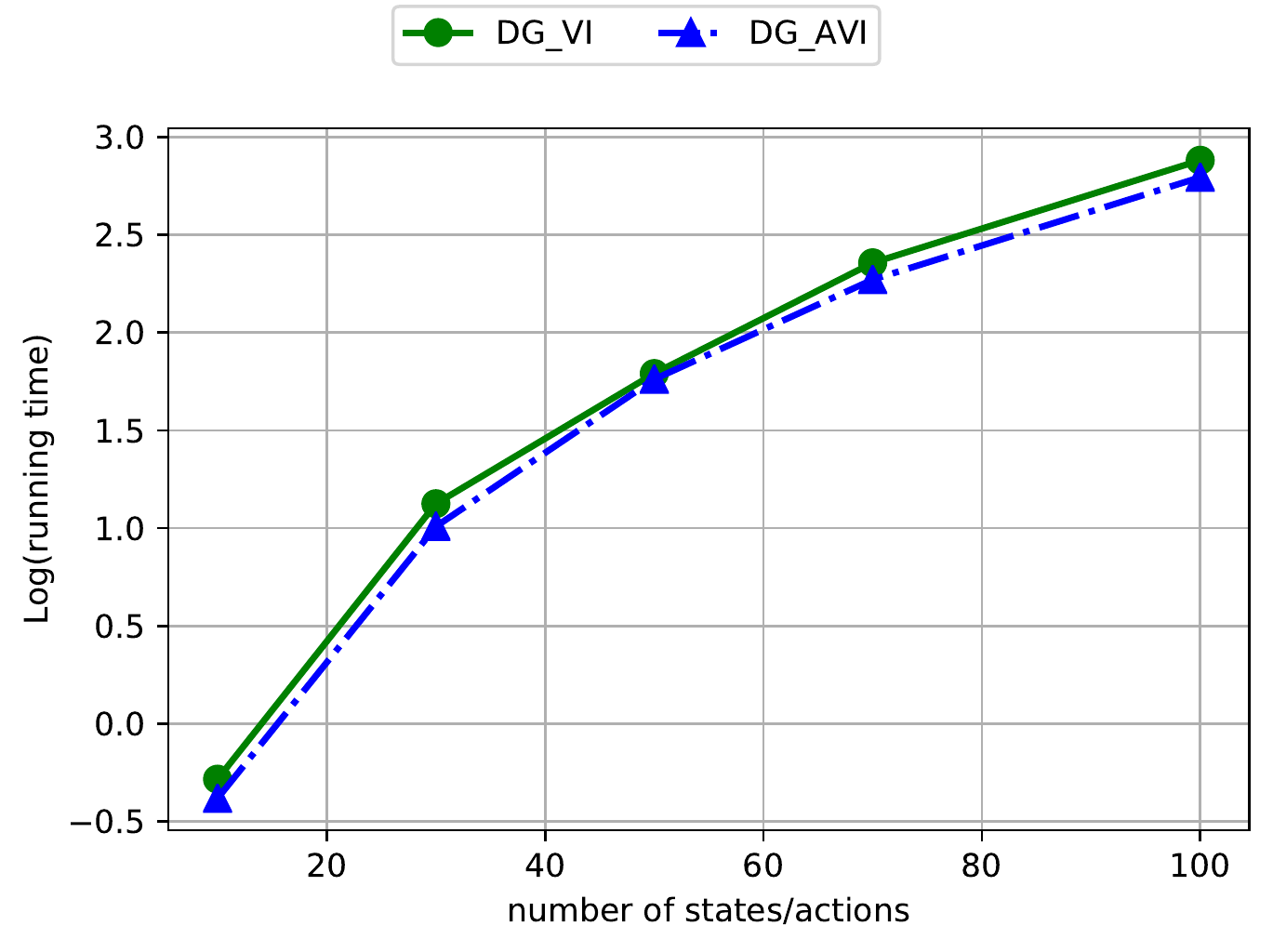}
  \captionof{figure}{Running times for $\lambda=0.6$.}
  \label{fig:UB_comp_lambda_60}
  \end{subfigure}
  \begin{subfigure}{0.4\textwidth}
\centering
  \includegraphics[width=1.0\linewidth]{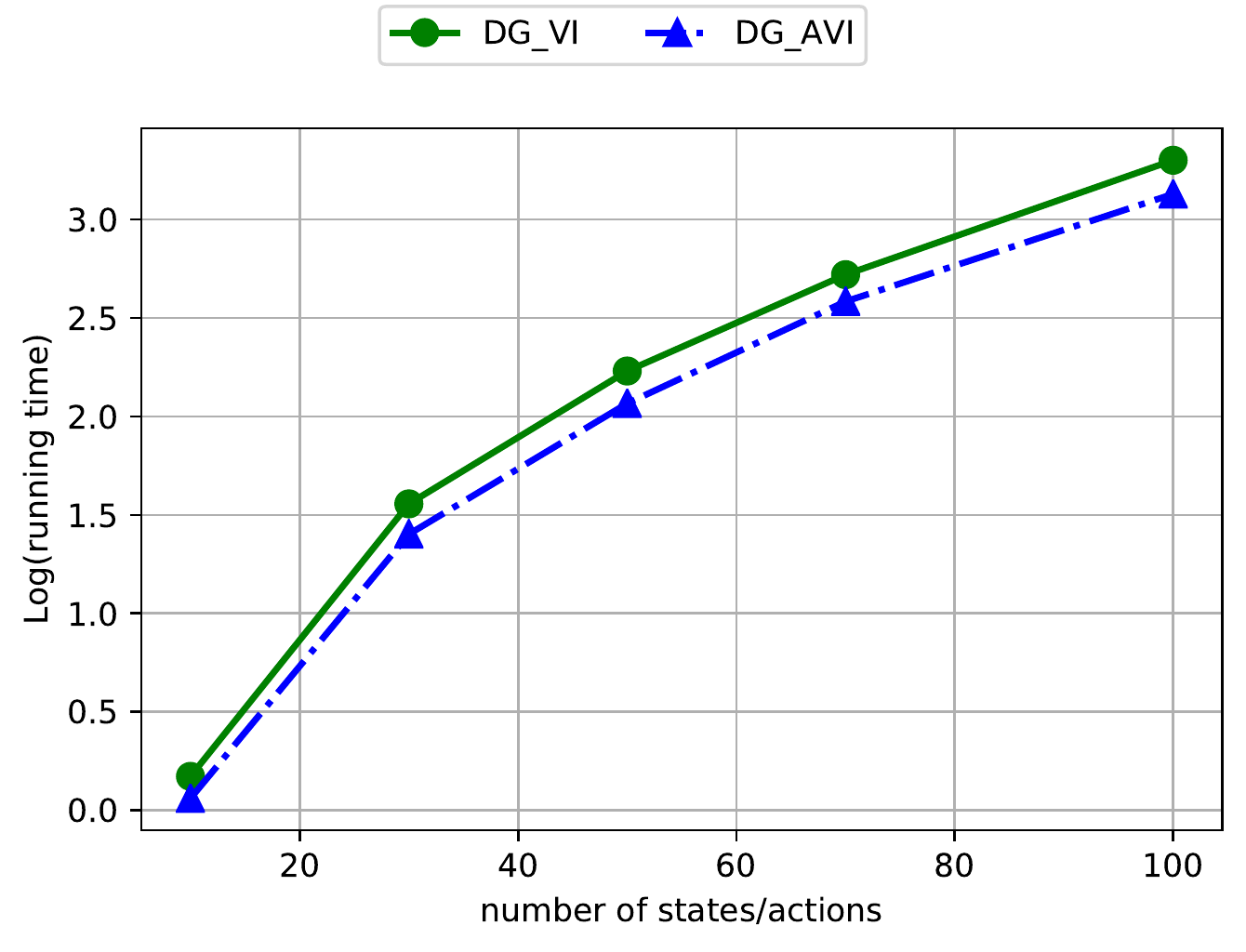}
  \captionof{figure}{Running times for $\lambda=0.8$.}
  \label{fig:UB_comp_lambda_80}
  \end{subfigure}
  \caption{Average running time (s) to compute the duality gap of a Garnet MDP instance.}
\end{figure*}


\section{Comparison of proximal setups for Algorithm \ref{alg:PD-RMDP}}\label{app:comparison-our-algorithm}
In this appendix we study the empirical performances of Algorithm \ref{alg:PD-RMDP}, in order to identify the best one, which we will then compare to other VI approaches. 

\noindent\textbf{Empirical setup.}
 All the simulations are implemented in Python 3.7.3, and were performed on a laptop with 2.2 GHz Intel Core i7 and 8 GB of RAM. We use Gurobi 8.1.1 to solve any linear or quadratic optimization problems involved. We generate Garnet MDPs (\citet{garnet}), which are an abstract class of MDPs parametrized by a branching factor $n_{branch}$, equal to the number of reachable next states from each state-action pair $(s,a)$. We consider $n_{branch}=0.5$ in our simulations. We draw the rewards parameters at random uniformly in $[0,10]$. We fix a discount factor $\lambda=0.8$. The radius $\alpha$ of the $\ell_{2}$ ball from the uncertainty set \eqref{eq:uncertainty-norm-2} is set to $\alpha = \sqrt{n_{branch} \times A }.$ All of the figures in this section show the logarithm of the performance measures (DG) in terms of the number of PD iteration performed in Algorithm \ref{alg:PD-RMDP}. Apart from Figures \ref{fig:comparison_norms_pq_11}-\ref{fig:comparison_norms_pq_22}, these performance measures are averaged across 10 randomly generated Garnet MDPs.

\noindent\textbf{Impact of proximal setup.}
We fix $S,A=30$ and we present in Figure \ref{fig:comparison_norms_pq_11}-\ref{fig:comparison_norms_pq_22} the Duality Gap (DG) of the current weighted average of the iterates of our algorithm, for three different proximal setups $(\| \cdot \|_{X},\| \cdot \|_{Y} ) \in \{ ( \ell_{1},\ell_{1}), (\ell_{1},\ell_{2}),(\ell_{2},\ell_{2})\}.$ The $(\ell_{2},\ell_{2})$ setup performs the best, even though its theoretical guarantees are worse than the $(\ell_{1},\ell_{1})$ setup (as seen in Theorem  \ref{th:main-complexity}). This disparity between theory and practice is analogous to the case of stationary bilinear min-max problems~\citep{GKG20}. In the rest of the simulations we focus on the $(\ell_{2},\ell_{2})$ setup. Note that Figure~\ref{fig:comparison_norms_pq_11}-\ref{fig:comparison_norms_pq_22} shows performance for a single instance. This is because the $(\ell_{1},\ell_{1})$ setup takes almost a day to run on a single instance of size $(S,A)=(30,30)$ (compared to minutes for the $(\ell_{2}, \ell_{2})$ setup), most likely because of the two interwoven binary searches (see also Appendix \ref{app:prop-fast-y-update}).

  \begin{figure*}[htp]
  \centering
  \begin{subfigure}{0.4\textwidth}
\centering
  \includegraphics[width=0.8\linewidth]{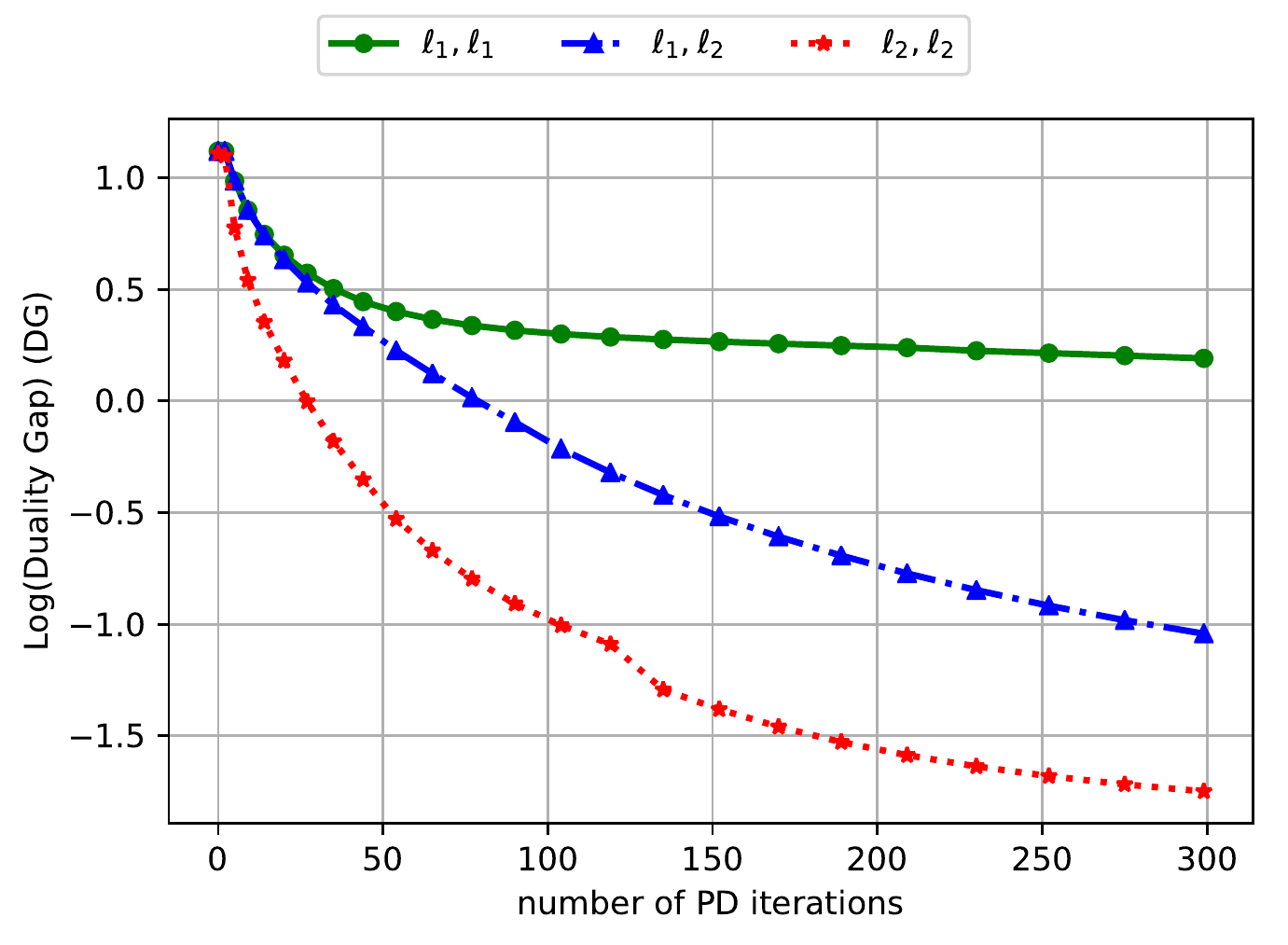}
  \captionof{figure}{Proximal setups comparison for $(p,q)=(1,1)$.}
  \label{fig:comparison_norms_pq_11}
  \end{subfigure}
  \begin{subfigure}{0.4\textwidth}
\centering
  \includegraphics[width=0.8\linewidth]{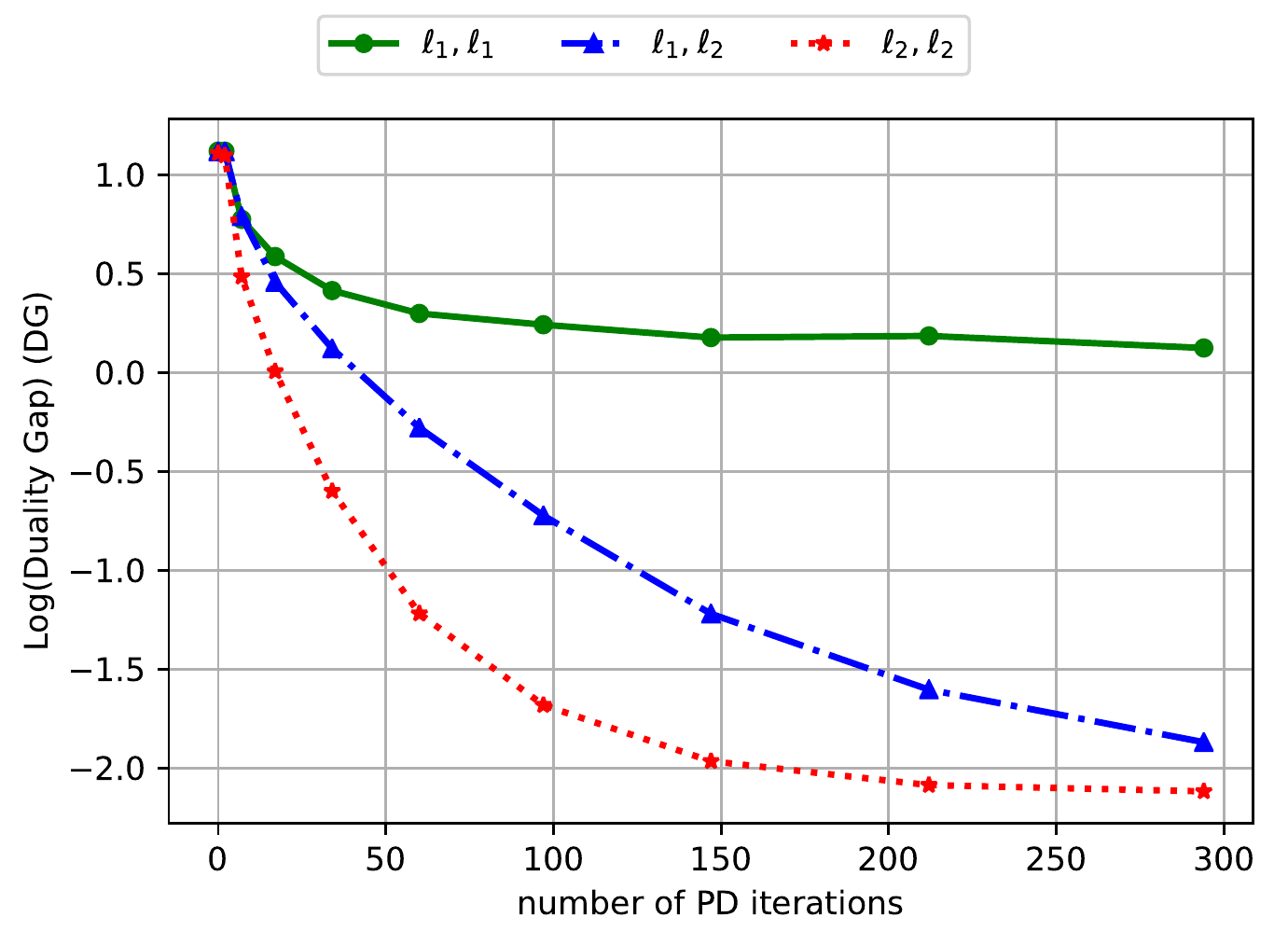}
  \captionof{figure}{Proximal setups comparison for $(p,q)=(2,2)$.}
  \label{fig:comparison_norms_pq_22}
  \end{subfigure}
  \caption{Comparison of proximal setups for various proximal setups.}
\end{figure*}


\noindent\textbf{Impact of epoch scheme.} 
We now investigate the impact of the epoch length $T_{\ell}=\ell^{q}$, parametrized by $q \in \N$.
We fix $(S,A)=(30,30)$ and we focus on the $(\ell_{2},\ell_{2})$ setup. We fix the averaging scheme at $p=1$ and we compare epoch lengths $q=0,1,2.$
The results are shown in Figure \ref{fig:DG_epoch}.
For the performance measure (DG), we find that $q=0,q=1$ and $q=2$ yield comparable convergence rates (in terms of number of PD iterations), with $q=2$ being slightly better than $q=0,q=1$ (note that our theory does not even guarantee convergence for $q=0$).
Note that for $q=0$, our algorithm performs only one PD update at each epoch, before updating the value vector $\bm{v}$, which has a cost of $O(AS^{2})$. This may make $q=0$ significantly slower in practice for large $S,A$, since the value vector updates have a negligible computational cost for $q> 0$ (compared to the numerous PD updates computational costs).

  \begin{figure*}[htp]
  \centering
  \begin{subfigure}{0.4\textwidth}
\centering
  \includegraphics[width=0.8\linewidth]{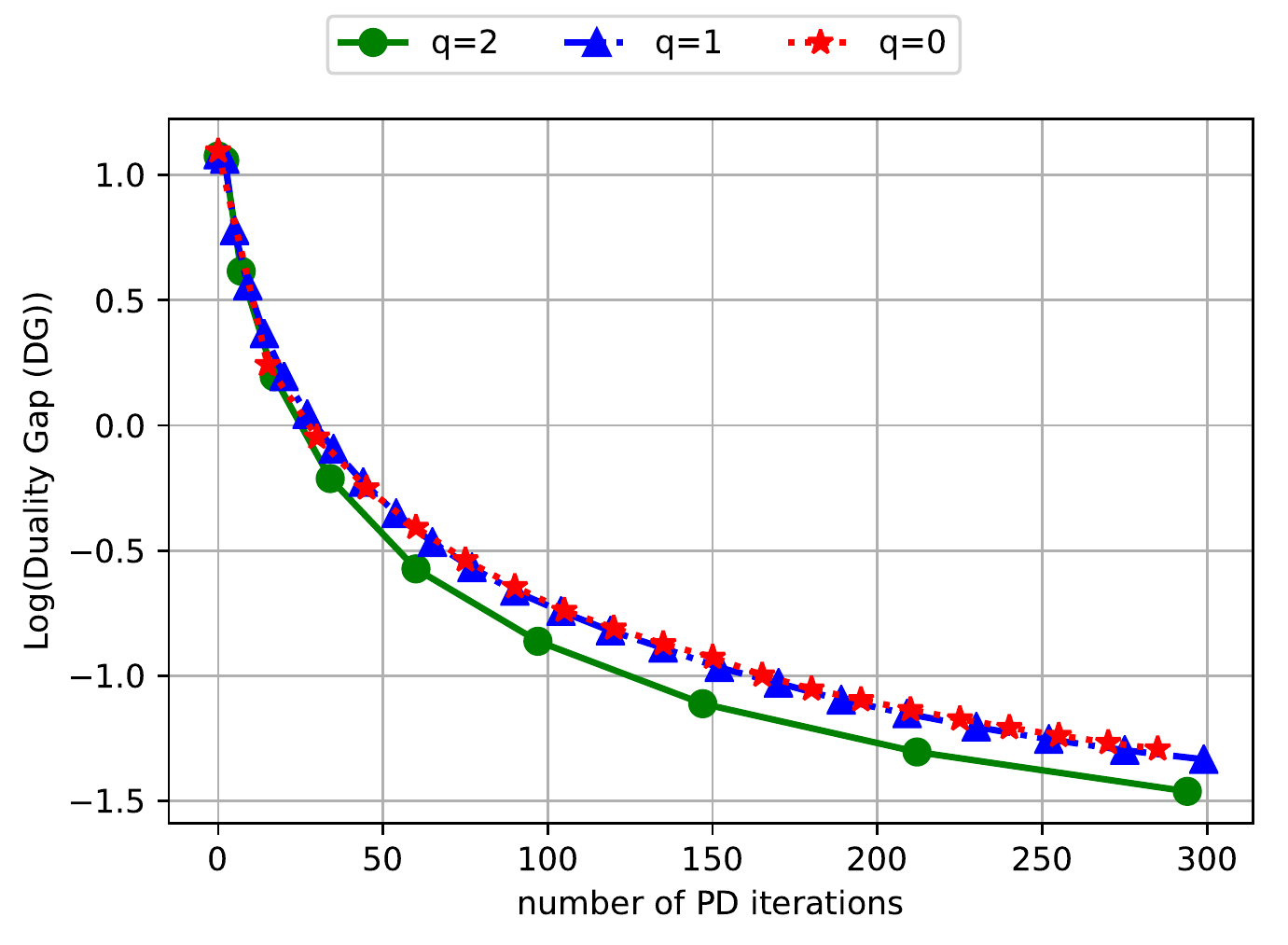}
  \captionof{figure}{ (DG) in terms of number of PD iterations.}
  \label{fig:DG_epoch}
  \end{subfigure}
  \begin{subfigure}{0.4\textwidth}
\centering
  \includegraphics[width=0.8\linewidth]{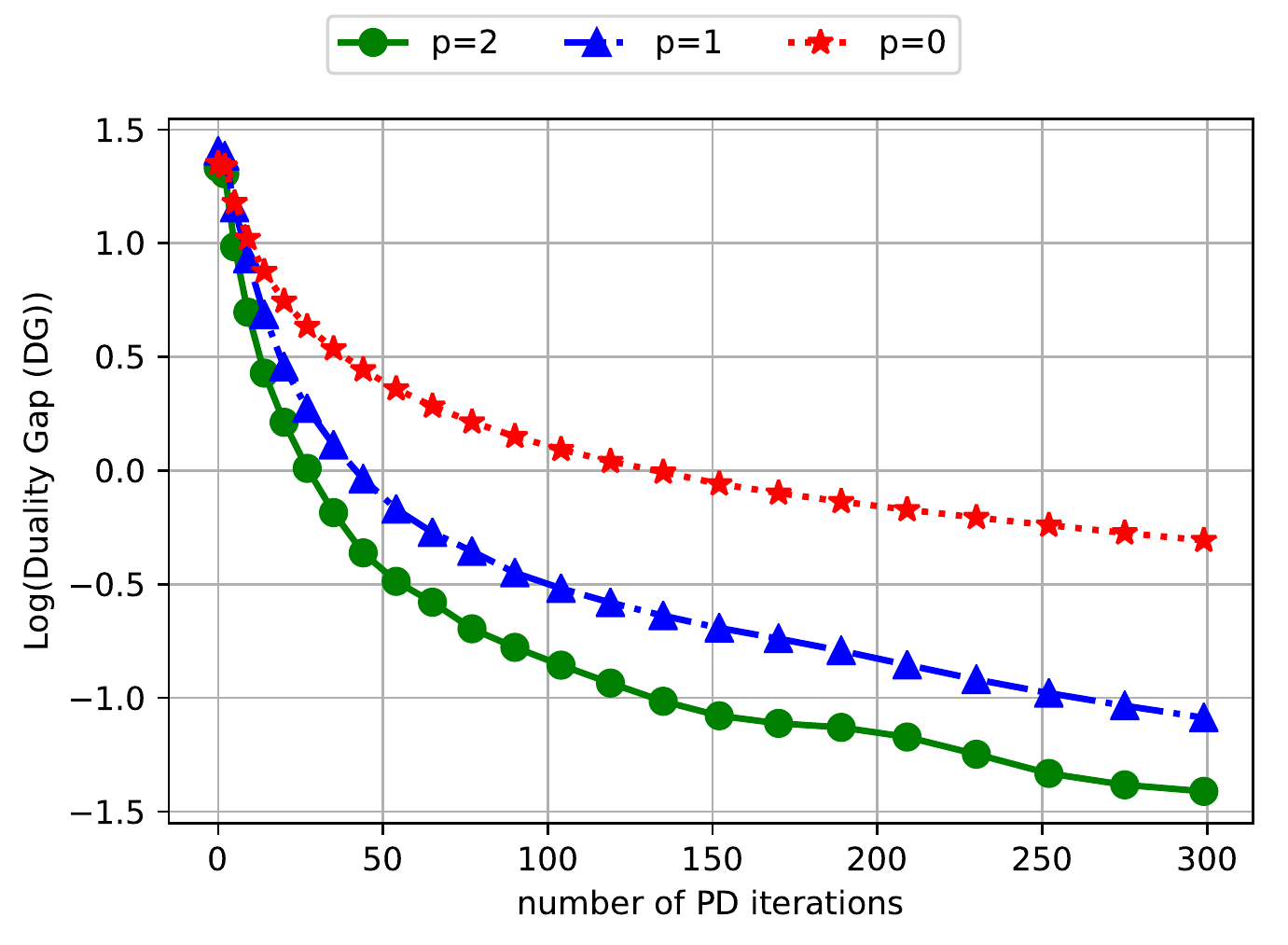}
  \captionof{figure}{ (DG) in terms of number of PD iterations.}
  \label{fig:DG_weight}
  \end{subfigure}
  \caption{Performances of our algorithm for various weights and epoch schemes.}
\end{figure*}


\noindent\textbf{Impact of weight scheme.} We now investigate the impact of the weight scheme $\omega_{t}=t^{p}$ used to average iterates.
We fix $(S,A)=(30,30)$, $q=2$ and use the $(\ell_{2},\ell_{2})$ setup. We compare $p=0,1,2.$
Figure \ref{fig:DG_weight} shows that increasing averages ($p=1,p=2$) perform better than uniform average ($p=0$), even though our convergence guarantees are independent of $p$. Similar observations have been made in zero-sum games and other convex-concave saddle-point problems~\citep{GKG20}.



\begin{remark} In our simulations we set $\lambda=0.8$. Of course, our algorithm works for any $\lambda \in (0,1).$ However, the performance guarantees of Algorithm \ref{alg:PD-RMDP} may degrade for $\lambda \rightarrow 1$, as some of the constants in the $O \left( \cdot \right)$ notations of Theorem \ref{th:main-complexity} depend on $1/(1-\lambda)$, a situation similar to the complexity of Value Iteration \ref{alg:VI}. Moreover, when $\lambda \rightarrow 1$, computing the duality gap (DG) becomes very slow: computing the minimizer and maximizer requires iterating contraction mappings, each with improvement factor $\lambda$ (see Appendix \ref{app:upper-bounds}). These last two limitations are not a particular shortcoming of our algorithm but are inherent to MDPs.
\end{remark}

\section{MDP instance inspired from healthcare}\label{sec:healthcare-example}
We present here an example of the nominal kernel $\bm{y}_{0}$ of the healthcare MDP instance that we introduced in our simulation section. We show here an example with $S=5$ health condition states and the mortality state.  The state $1$ corresponds to a healthy condition while $S=5$ is more likely to lead to mortality. The transition kernel for general $S \geq 5$ are generated in the same fashion. In order to sample $N$ kernels $\bm{y}_{1}, ..., \bm{y}_{N}$ around the nominal transition $\bm{y}_{0}$, we generate random Garnet MDPs $\bm{\tilde{y}}_{1}, ..., \bm{\tilde{y}}_{N}$ with $n_{branch}=20 \%$ and we obtain $N$ samples $\bm{y}_{1}, ..., \bm{y}_{N}$ around $\bm{y}_{0}$ as 
\[\bm{y}_{i} = 0.95 \cdot \bm{y}_{0} + 0.05 \cdot \bm{\tilde{y}}_{i}, i = 1, ..., N.\]
We choose the coefficients $(0.95,0.05)$ in the above convex combination so that we obtain $\bm{y}_{i}$ as small deviations from $\bm{y}_{0}$.
  \begin{figure*}[htp]
  \centering
  \begin{subfigure}{0.3\textwidth}\centering
  \includegraphics[width=0.8\linewidth]{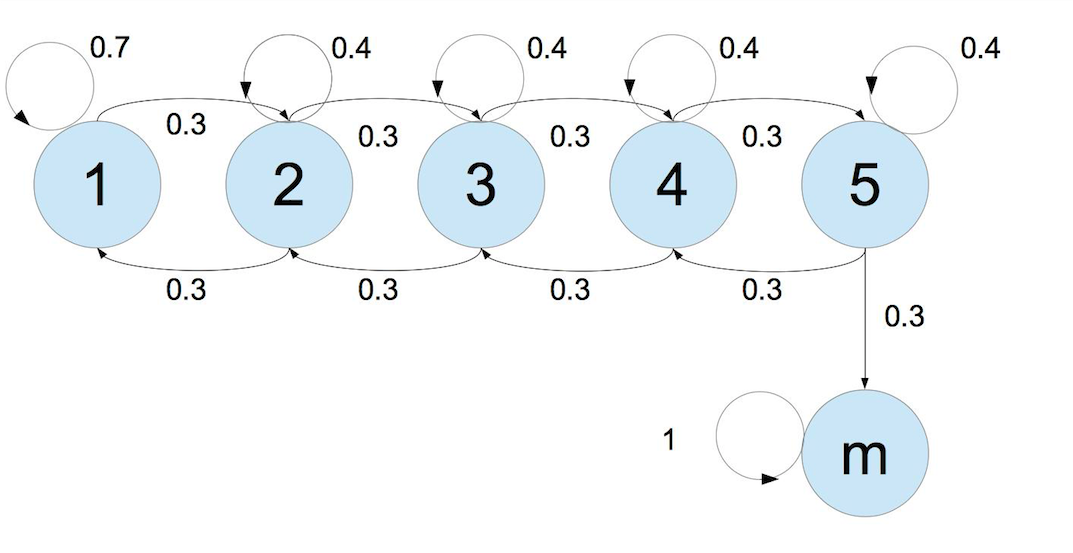}
  \captionof{figure}{ Transition for action = \textit{low drug level}.}
  \label{fig:MDP-1}
\end{subfigure}
  \begin{subfigure}{0.3\textwidth}\centering
\centering
  \includegraphics[width=0.8\linewidth]{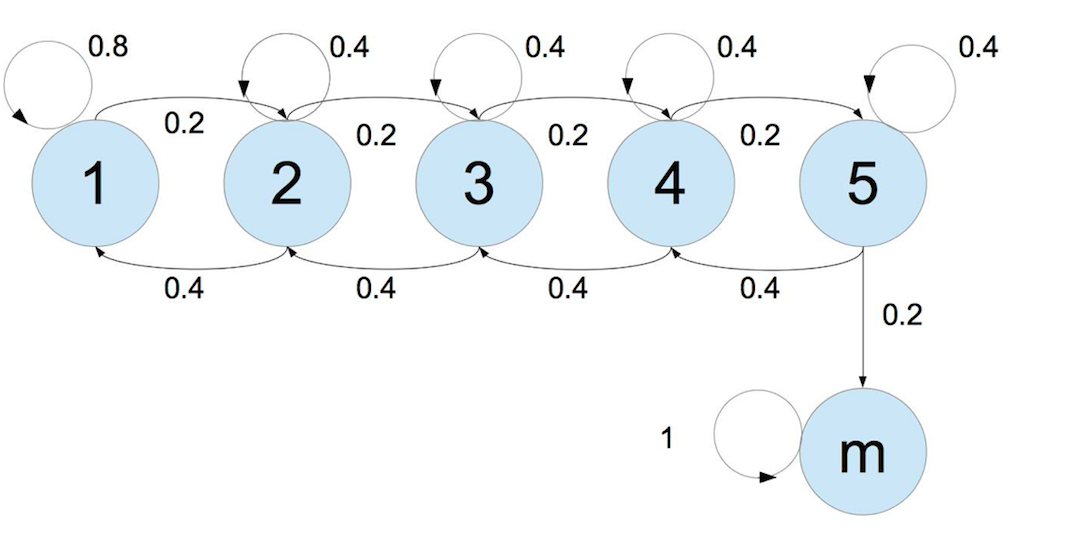}
  \captionof{figure}{ Transition for action = \textit{medium drug level}.}
  \label{fig:MDP-2}
\end{subfigure}
  \begin{subfigure}{0.3\textwidth}\centering
\centering
  \includegraphics[width=0.8\linewidth]{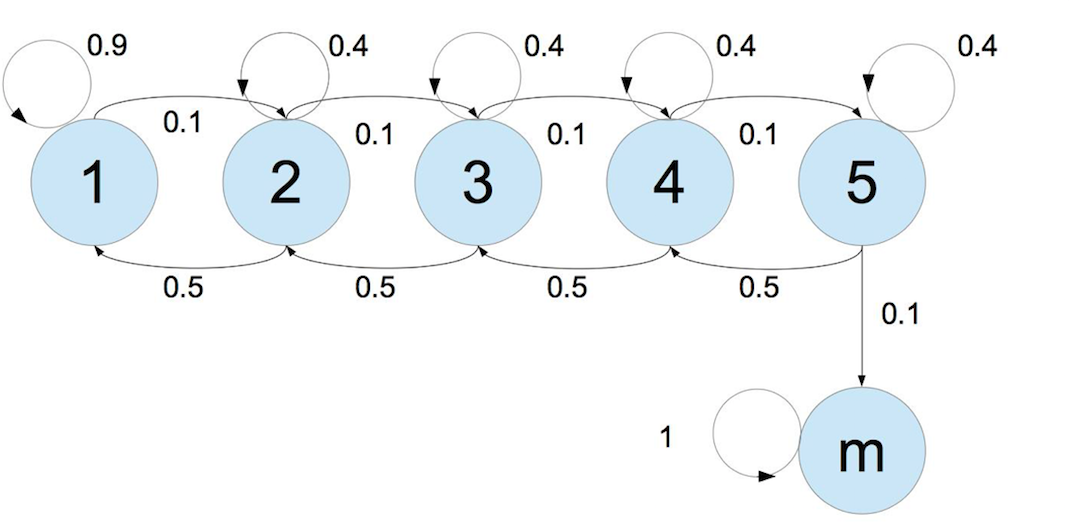}
  \captionof{figure}{ Transition for action = \textit{high drug level}.}
  \label{fig:MDP-3}
\end{subfigure}
\caption{Transition rates for the healthcare MDP instance.}
\end{figure*}
\section{Details on machine replacement example}\label{app:machine-instance}
We present here the nominal transition kernel $\bm{y}_{0}$ associated with the machine replacement example introduced in our numerical experiments section. Here we show an instance where there are $10$ states: 8 states related to the condition of the machine, and two repair states. The instances for larger number of states are constructed in the same fashion by adding some condition states for the machine. To generate $N$ samples around the nominal kernel $\bm{y}_{0}$, we create small perturbations with Garnet MDP instances.
  \begin{figure*}[htp]
  \centering
  \begin{subfigure}{0.45\textwidth}\centering  
         \includegraphics[width=0.8\linewidth]{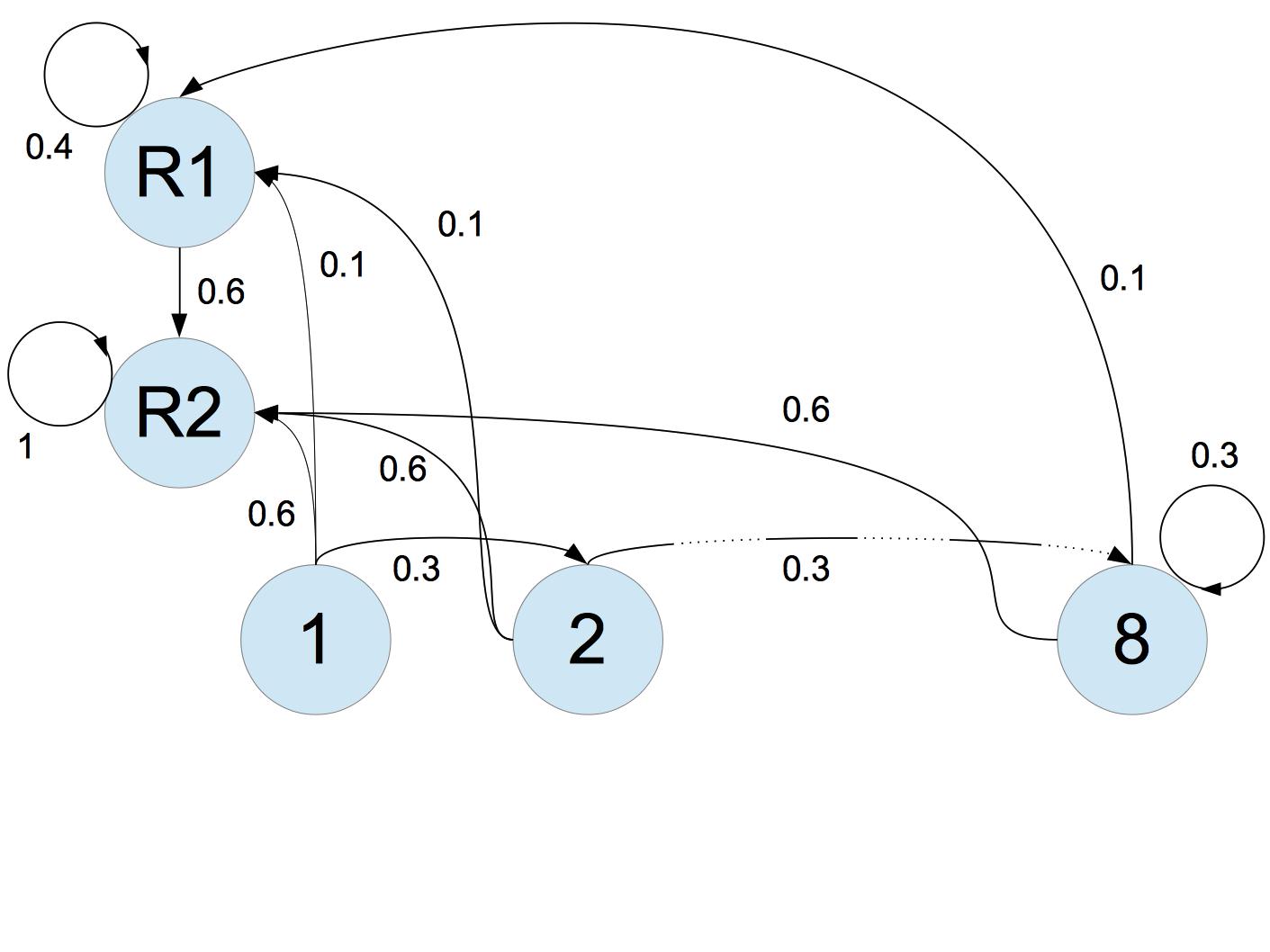}
         \caption{Nominal transition for action = \textit{ repair } in our machine replacement MDP.}
           \label{fig:Machine-MDP-1}
     \end{subfigure}
  \begin{subfigure}{0.45\textwidth}\centering  
         \centering
         \includegraphics[width=0.8\linewidth]{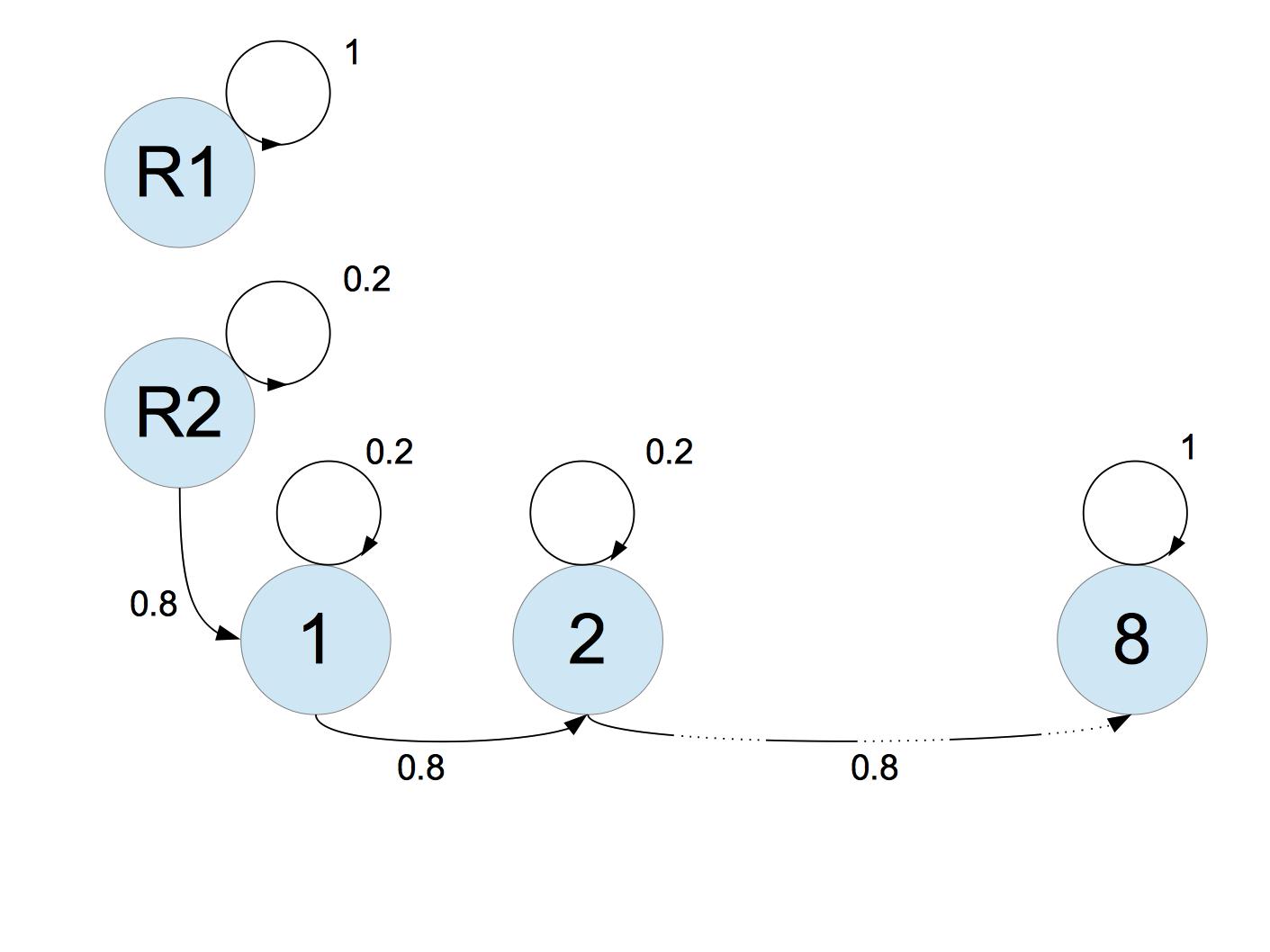}
         \caption{Nominal transition for action = \textit{ no repair } in our machine replacement MDP.}
         \label{fig:Machine-MDP-2}
              \end{subfigure}
\caption{Transition rates for the machine replacement MDP instance.}
\end{figure*}
\section{Details on numerical implementations}\label{app:simu}

%

\paragraph{Value Iteration.} At every epoch of Value Iteration \ref{alg:VI}, we need to compute $F(\bm{v})$ for the current value vector $\bm{v} \in \R^{S}$, where
\begin{equation*}
F(\bm{v})_{s} = \min_{\bm{x}_{s} \in \Delta(A)} \max_{\bm{y}_{s} \in \PP_{s}} \sum_{a=1}^{A} x_{sa} \left( c_{sa} + \lambda \cdot \bm{y}_{sa}^{\top}\bm{v} \right), \forall \; s \in \X.
\end{equation*} 
In order to solve this program, we could use duality in the inner maximization program, and turn the computation of $F(\bm{v})_{s}$ into a large (minimization) convex program with linear objective, some constraints and a conic quadratic constraint (see Corollary 3 in \cite{Kuhn}). However, we decide to take an alternate approach which results in a simpler optimization program, namely, a convex  program with some linear constraints and a quadratic constraint. In particular, from convex duality we have, for any $s \in \X$,
\begin{align}
F(\bm{v})_{s} & = \min_{\bm{x}_{s} \in \Delta(A)} \max_{\bm{y}_{s} \in \PP_{s}}  \sum_{a=1}^{A} x_{sa} \left( c_{sa} + \lambda \cdot \bm{y}_{sa}^{\top}\bm{v} \right)   \nonumber \\
& = \max_{\bm{y}_{s} \in \PP_{s}}  \min_{\bm{x}_{s} \in \Delta(A)}  \sum_{a=1}^{A} x_{sa} \left( c_{sa} + \lambda \cdot \bm{y}_{sa}^{\top}\bm{v} \right). \label{eq:T_v_max_min}
\end{align}

Applying convex duality twice, we obtain
\begin{equation}\label{eq:T_v_simu}
\begin{aligned}
F(\bm{v})_{s} = \max \; & \mu \\
& \mu \in \R, \bm{y} \in \PP_{s},\\
& c_{sa} + \lambda \bm{y}_{sa}^{\top}\bm{v} \geq \mu, \forall \; a \in \A.
\end{aligned}
\end{equation}

In our simulations, we use the formulation \eqref{eq:T_v_simu} in order to obtain the value of $F(\bm{v})_{s}$. Given the definition of $\PP_{s}$ as \eqref{eq:uncertainty-norm-2}, formulation \eqref{eq:T_v_max_min} is a linear program with linear constraints and one quadratic constraint. Following \cite{BenTal-Nemirovski}, we can solve \eqref{eq:T_v_simu} up to accuracy $\epsilon$ in a number of arithmetic operations in $O\left(S^{3.5}A^{3.5}\log(1/\epsilon) \right).$ We warm-start each of this optimization problem with the optimal solution found in the previous epoch of \ref{alg:VI}.

We would like to note that a priori, the optimal pair in $F(\bm{v})$ in the min-max formulation as in \eqref{eq:T_max-def} may not be the same pair attaining the max-min formulation as in \eqref{eq:T_v_max_min}. However, we are only interested in the scalar value of $F(\bm{v})_{s}$, in order to run \ref{alg:VI} and obtain $\bm{v}^{*}$, the fixed-point of the operator $T$ defined in \eqref{eq:v-star-fixed-point}. Once we have obtained the vector $\bm{v}^{*}$, we can eventually solve $F(\bm{v}^{*})$ in its min-max form only once, in order to obtain the pair $(\bm{x}^{*},\bm{y}^{*})$ in $F(\bm{v}^{*})$ in its min-max formulation. Alternately, the authors in \cite{Ho} provide a method to recover the optimal solution of the min-max problem \eqref{eq:T_max-def} from the optimal solution of the max-min problem \eqref{eq:T_v_max_min}, in the case where $\PP$ is a weighted $\ell_{1}$ ball centered around $\bm{y}^{0}$.

\paragraph{Accelerated Value Iteration.} \cite{GGC} interpret the vector $\left( \bm{I} - T \right) (\bm{v})$ as the gradient of some function at the vector $\bm{v}$. Adapting the acceleration scheme from convex optimization (\cite{nesterov-1983, nesterov-book}) to an accelerated iterative algorithm for computing $\bm{v}^{*}$ leads to \textit{Accelerated Value Iteration}, which significantly outperforms Value Iteration and variants  when the discount factor is close to $1$ \citep{GGC}. In particular, for any sequences of scalar $(\alpha_{s})_{s \geq 0}$ and $(\gamma_{s})_{s \geq 0} \in \R^{\N}$, Accelerated Value Iteration (AVI) is defined as
\begin{equation}\label{alg:AVI}\tag{AVI}
\bm{v}_{0},\bm{v}_{1} \in \R^{S}, \begin{cases}
    \bm{h}_{t}=\bm{v}_{t}+\gamma_{t}\cdot \left( \bm{v}_{t}-\bm{v}_{t-1} \right), \\
	\bm{v}_{t+1} \gets \bm{h}_{t}-\alpha_{t} \left( \bm{h}_{t}- T \left( \bm{h}_{t} \right) \right), \end{cases} \forall \; t \geq 1.
\end{equation}
Following \cite{GGC}, we choose step sizes as\begin{equation*}
\alpha_{s}  = \alpha = 1/(1+\lambda),
 \gamma_{s} = \gamma = \left(1-\sqrt{1- \lambda^{2}}\right)/\lambda, \forall s \; \geq 1.
 \end{equation*}
We use \eqref{eq:T_v_simu} in order to compute $F(\bm{h})$ for \ref{alg:AVI}.

\paragraph{Gauss-Seidel Value Iteration.} We also consider Gauss-Seidel Value Iteration (GS-VI), a popular asynchronous variant of \ref{alg:VI} \citep{Puterman},
 where 
$
v^{t+1}_{s} = \max_{a \in \A} r_{sa} + \lambda \cdot \sum_{s'=1}^{s-1} P_{sas'}v^{t+1}_{s'} + \lambda \cdot \sum_{s'=s}^{n} P_{sas'}v^{t}_{s'}.$

\paragraph{Anderson Value Iteration.} We also consider Anderson VI (referred to as \textit{Anderson} in our figures), see \cite{ref-c}. In order to compute the next iterates $\bm{v}^{t+1}$, Anderson VI computes weights $\alpha_{0}, ..., \alpha_{m}$ and updates $\bm{v}^{t+1}$ as a linear combination of the last $(m+1)$-iterates $F(\bm{v}^{t}), ..., F(\bm{v}^{t-m})$:
\[ \bm{v}^{t+1} = \sum_{i=0}^{m} \alpha_{i} F(\bm{v}^{t-m+i}).\]
The weights $\bm{\alpha} \in \R^{m+1}$ are updated at every iteration, see Algorithm 1 and Equation (1) in \cite{ref-c} for further details. There is no heuristics for choosing $m$; we choose $m=5$ in our numerical experiments.
\section{Numerical experiments for KL uncertainty set}\label{app:simu-KL}
We present here our numerical results for the KL uncertainty set. We consider the healthcare instance, the machine replacement instance and the random Garnet MDP instances, introduced in Section \ref{sec:simu}.  The numerical setup is the same as for ellipsoidal uncertainty sets. Note that for the KL uncertainty set, we can not compare to Value Iteration directly, as there is no direct convex reformulation for the Bellman update 
\begin{align*}
F(\bm{v})=\min_{\bm{x} \in \Delta(A)} \max \; & \sum_{a \in \A} x_{a} \left(r_{sa}+\lambda \bm{y}^{\top}_{sa}\bm{v}\right) \\
& \left(\bm{y}_{sa}\right) \in \left( \Delta(S) \right)^{A}, \\
& \sum_{a \in \A} KL \left(\bm{y}_{sa},\bm{y}^{0}_{sa}\right) \leq \alpha.
\end{align*}
For a KL uncertainty set, computing a proximal update with the $\ell_{1}$ setup requires one binary search, and computing a proximal update with the $\ell_{2}$ setup requires two interwoven binary searches, as evidenced in Proposition \ref{prop:prop-fast-y-update-KL}. Therefore, we focus on the $\ell_{1}$ setup for the $y$-player and the $\ell_{1}$ setup for the $x$-player. We present the running times to compute an optimal policy on various instances below (healthcare and machine replacement examples, Garnet MDPs with high and low connectivity).

As the convergences times of our algorithm are longer for KL uncertainty sets than for ellipsoidal uncertainty sets, we only compute optimal solutions up to a number of states of $50$. For the Garnet MDP instances, the number of actions is equal to the number of states, while there are two actions for the machine replacement instance and three actions for the healthcare instance.

Note that we also observe longer convergence rates for the $(\ell_{1},\ell_{1})$ setup in Figures \ref{fig:comparison_norms_pq_11}-\ref{fig:comparison_norms_pq_22}. Our FOM-VI algorithm finds a solution to the $s$-rectangular robust MDP problem with KL uncertainty sets but for the Garnet MDP instances the running time greatly increases, compared to the KL uncertainty sets. The running times for the more realistic healthcare and machine replacement instances also increase but remains below 100 seconds for up to 50 states.

\begin{figure}[H]
\centering
  \includegraphics[width=0.5\linewidth]{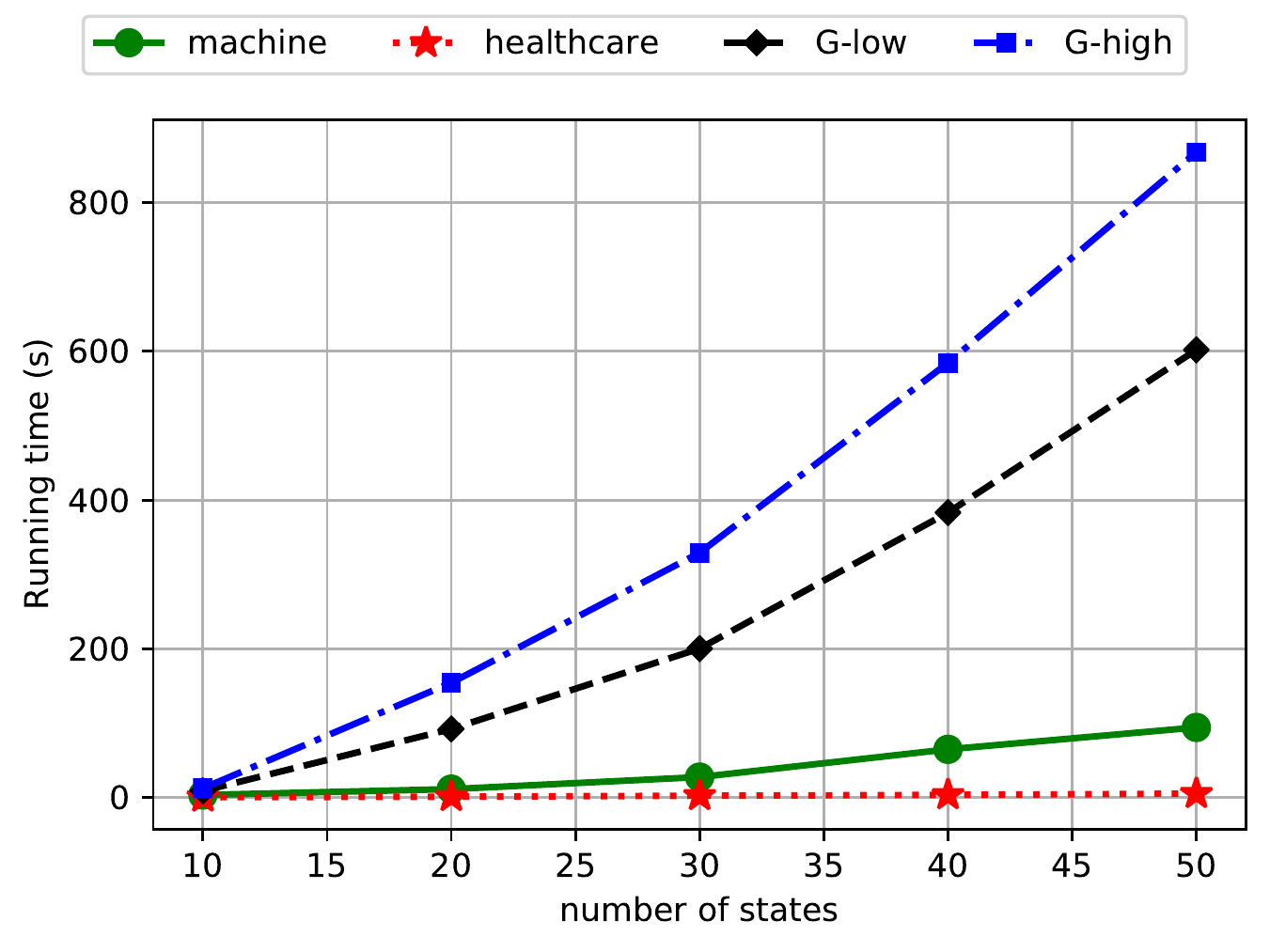}
  \captionof{figure}{Performances of FOM-VI on four different instances for KL uncertainty sets.}
  \label{fig:comparison_all_KL}
  \end{figure}

%
%

\end{document}